\documentclass{compositio}
\usepackage{amssymb}
\usepackage{amsmath,enumerate}
\usepackage{graphicx}

\theoremstyle{definition}
\newtheorem{Def}{Definition}[section]
\theoremstyle{remark}
\newtheorem{Rem}[Def]{Remark}
\newtheorem{Ex}[Def]{Example}
\newtheorem{Nota}[Def]{Notation}

\newtheorem*{Ack}{Acknowledgments}
\theoremstyle{plain}
\newtheorem{Th}[Def]{Theorem}
\newtheorem{Prop}[Def]{Proposition}
\newtheorem{Lem}[Def]{Lemma}
\newtheorem{Cor}[Def]{Corollary}
\newtheorem{Fact}[Def]{Fact}
\newtheorem{Facts}[Def]{Facts}

\newcommand{\R}{\mathbb{R}}
\newcommand{\Z}{\mathbb{Z}}

\newcommand{\C}{\mathbb{C}}
\renewcommand{\P}{\mathbb{P}}

\newcommand{\CM}{\mathcal{M}}
\newcommand{\CH}{\mathcal{H}}
\newcommand{\FX}{\mathfrak{X}}
\newcommand{\DS}{\displaystyle }

\newcommand{\al}{\alpha }
\newcommand{\be}{\beta }
\newcommand{\ga}{\gamma }
\newcommand{\de}{\delta }
\newcommand{\Ga}{\Gamma }
\newcommand{\De}{\Delta }

\newcommand{\vep}{\varepsilon }
\newcommand{\vph}{\varphi }
\newcommand{\la}{\lambda }
\newcommand{\La}{\Lambda }

\newcommand{\si}{\sigma }

\newcommand{\pa}{\partial }

% setting for equation numbers with section number
\makeatletter
\@addtoreset{equation}{section}

\makeatother
\title[Monodromy of $F_C$]
{The monodromy representation \\
of Lauricella's hypergeometric function $F_C$
}
\author{Yoshiaki Goto}
\email{y-goto@math.kobe-u.ac.jp}
\address[Goto]{
  Department of Mathematics, 
  Graduate School of Science,
  Kobe University, 
  Kobe 657-8501, Japan
}

\keywords{Monodromy representation, 
Hypergeometric functions, Lauricella's $F_C$, 
Twisted homology groups.
}
\subjclass[2010]{33C65, 32S40, 14F35.}
\date{}

\begin{document}

\begin{abstract}
We study the monodromy representation of 
the system $E_C$ of differential equations satisfied by 
Lauricella's hypergeometric function $F_C$ of 
$m$ variables. 
Our representation space is the
twisted homology group associated with an integral representation of $F_C$. 
We find generators of the fundamental group of 
the complement of the singular locus of $E_C$, and 
give some relations for these generators. 
We express the circuit transformations along these generators, 
by using the intersection forms 
defined on the twisted homology group and its dual. 
\end{abstract}
\maketitle

\section{Introduction}\label{section-intro}
Lauricella's hypergeometric series $F_C$ of $m$ variables $x_1 ,\ldots ,x_m$ 
with complex parameters $a$, $b$, $c_1$, $\ldots$, $c_m$ is defined by 
\begin{align*}
 F_{C} (a,b,c ;x ) 
 =\sum_{n_{1} ,\ldots ,n_{m} =0} ^{\infty } 
 \frac{(a,n_{1} +\cdots +n_{m} )(b,n_{1} +\cdots +n_{m} )}
 {(c_{1} ,n_{1} )\cdots (c_{m} ,n_{m} ) n_{1} ! \cdots n_{m} !} x_{1} ^{n_{1}} \cdots x_{m} ^{n_{m}} ,
\end{align*}
where $x=(x_1 ,\ldots ,x_m),\ c=(c_1 ,\ldots ,c_m)$, 
$c_1 ,\ldots ,c_m \not\in \{ 0,-1,-2,\ldots \}$, and $(c_1 ,n_1)=\Gamma (c_1+n_1)/\Gamma (c_1)$. 
This series converges in the domain 
$$
D_{C} :=\left\{ (x_{1} ,\ldots ,x_{m} ) \in \C ^m  \ \middle| \ \sum _{k=1} ^{m} \sqrt{|x_{k}|} <1  \right\} ,
$$
and admits an Euler-type integral representation (\ref{integral}). 
The system $E_C (a,b,c)$ of differential equations satisfied by  
$F_C (a,b,c;x)$ is a holonomic system 
of rank $2^m$ with the singular locus $S$ given in (\ref{sing-locus}). 
There is a fundamental system of solutions to $E_C (a,b,c)$ in a simply connected domain in $D_C -S$, 
which is given in terms of Lauricella's hypergeometric series $F_C$ with different parameters; 
see (\ref{series-sol}) for their expressions. 

In the case of $m=2$, the series $F_C(a,b,c;x)$ and the system $E_C(a,b,c)$ are called 
Appell's hypergeometric series $F_4(a,b,c;x)$ and system $E_4(a,b,c)$ of differential equations. 
The monodromy representation of $E_4 (a,b,c)$ 
has been studied from several different points of view; see
\cite{Takano}, \cite{Kaneko}, \cite{HU}, and \cite{GM}. 
On the other hand, there were few results of the monodromy representation for general $m$. 
In \cite{Beukers}, Beukers 
studies the monodromy representation of $A$-hypergeometric system and 
gives representation matrices for many kinds of hypergeometric systems 
as examples of his main theorem. 
However, it seems that 
his method is not applicable for Lauricella's $F_C$. 

In this paper, we study the monodromy representation of $E_C (a,b,c)$ 
for general $m$, by using twisted homology groups associated with 
the integral representation (\ref{integral}) of $F_C (a,b,c;x)$ 
and the intersection form defined on the twisted homology groups. 
Our consideration is based on the method for Appell's 
$E_4 (a,b,c)$ in \cite{GM}. 

Let $X$ be the complement of the singular locus $S$. 
The fundamental group of $X$ is generated by $m+1$ loops 
$\rho_0 ,\ \rho_1 ,\ldots ,\ \rho_m$ which satisfy
\begin{align*}
  \rho_i \rho_j =\rho_j \rho_i \quad (1\leq i,j \leq m) ,\quad
  (\rho_0 \rho_k)^2 =(\rho_k \rho_0)^2 \quad (1\leq k \leq m).
\end{align*}
Here, $\rho_k \ (1\leq k \leq m)$ turns the divisor $(x_k=0)$, 
and $\rho_0$ turns the divisor 
$$
\prod _{\vep _{1} ,\ldots ,\vep _{m} =\pm 1} 
\left(1+\sum_{k} \vep _{k} \sqrt{x_{k}} \right) =0  
$$
around the point $\left( \frac{1}{m^2},\ldots, \frac{1}{m^2} \right)$.
In the appendix, we show this claim by applying the Zariski theorem of Lefschetz type. 
Note that for $m=2$, an explicit expression of the fundamental group of $X$ 
is given in \cite{Kaneko}. 

We thus investigate the circuit transformations 
$\CM_i$ along $\rho_i$, for $0\leq i \leq m$. 
We use the $2^m$ twisted cycles $\{ \De_I \}_{I\subset \{ 1,\ldots m \}}$ 
constructed in \cite{G-FC}, 
which represent elements in the $m$-th twisted homology group 
and correspond to the solutions (\ref{series-sol}) to $E_C(a,b,c)$. 
We obtain the representation matrix of $\CM_k \ (1\leq k \leq m)$ 
with respect to the basis $\{ \De_I \}_I$ easily. 
The eigenvalues of $\CM_k$ are $\exp (-2\pi \sqrt{-1} c_k)$ and $1$. 
Both eigenspaces are $2^{m-1}$-dimensional and 
spanned by half subsets of $\{ \De_I \}_I$. 
On the other hand, it is difficult to represent $\CM_0$ directly 
with respect to the basis $\{ \De_I \}_I$. 
Thus we study the structure of the eigenspaces of $\CM_0$. 
We find out that it is quite simple; our main theorem (Theorem \ref{main}) 
is stated as follows. 
The eigenvalues of $\CM_0$ are 
$(-1)^{m-1} \exp (2\pi \sqrt{-1}(c_1 +\cdots +c_m-a-b))$ and $1$. 
The eigenspace $W_0$ of eigenvalue $(-1)^{m-1} \exp (2\pi \sqrt{-1}(c_1 +\cdots +c_m-a-b))$ 
is one-dimensional and spanned by the twisted cycle $D_{1\cdots m}$ defined by 
some bounded chamber. 
Further, the eigenspace $W_1$ of eigenvalue $1$ is characterized as 
the orthogonal complement of $W_0 =\C D_{1\cdots m}$ with respect to the intersection form. 

As a corollary, we express the linear map $\CM_i$ $(0\leq i \leq m)$
by using the intersection form. 
Our expressions are independent of the choice of a basis 
of the twisted homology group. 
To represent $\CM_i$ by a matrix with respect to a given basis, 
it is sufficient to evaluate some intersection numbers. 
In particular, the images of any twisted cycles by $\CM_0$ are 
determined from only the intersection number with the eigenvector $D_{1\cdots m}$; 
see Corollary \ref{CM_0-1}. 
In Section \ref{section-matrix}, we give the simple representation matrix of 
$\CM_i$ with respect to a suitable basis, 
and write down the examples for $m=2$ and $m=3$. 

The irreducibility condition of the system $E_C(a,b,c)$ is 
known to be 
\begin{align*}
  a-\sum _{i\in I} c_i ,\ \ b-\sum _{i \in I} c_i \not\in \Z 
\end{align*}
for any subset $I$ of $\{ 1,\ldots ,m \}$, as is in \cite{HT}. 
Throughout this paper, we assume that the parameters $a,\ b$, and 
$c=(c_1 ,\ldots ,c_m)$ are generic, which means that 
we add other conditions to the irreducibility condition; 
for details, refer to Remark \ref{generic}. 

\begin{Ack}
  The author thanks Professor Keiji Matsumoto 
  for his useful advice and constant encouragement.
  He is also grateful to Professor Jyoichi Kaneko for 
  helpful discussions. 
  % He thanks the referee for suggesting some improvement in 
  % the previous version of the article. 
\end{Ack}

\section{Differential equations and integral representations}
In this section, we collect some facts about Lauricella's $F_C$ 
and the system $E_C$ of 
differential equations that it satisfies. 

\begin{Nota}\label{k}
  \begin{enumerate}[(i)]
  \item Throughout this paper, the letter $k$ always stands for an index running from $1$ to $m$. 
    If no confusion is possible, 
    $\sum_{k=1} ^m$ and $\prod_{k=1} ^m$ are often simply denoted by 
    $\sum$ (or $\sum_k$) and $\prod$ (or $\prod_k$), respectively. 
    For example, under this convention  
    $F_C (a,b,c;x)$ is expressed as 
    \begin{align*}
      F_{C} (a,b,c ;x ) 
      =\sum_{n_{1} ,\ldots ,n_{m} =0} ^{\infty } 
      \frac{(a,\sum n_{k} )(b,\sum n_{k} )}
      {\prod (c_{k} ,n_{k} )\cdot \prod n_{k} ! } \prod x_{k} ^{n_{k}} . 
    \end{align*}
  \item For a subset $I$ of $\{ 1,\ldots ,m \}$, 
    we denote the cardinality of $I$ by $|I|$. 
  \end{enumerate}
\end{Nota}

Let $\pa_k \ (1\leq k \leq m)$ be the partial differential operator with respect to $x_k$. 
We set $\theta_k :=x_k \pa_k$, $\theta :=\sum_k \theta_k$.  
Lauricella's $ F_C (a,b,c;x)$ satisfies differential equations 
$$
\left[ \theta_k (\theta_k+c_k-1)-x_k(\theta +a)(\theta +b)  \right] f(x)=0, 
\quad 1\leq k \leq m.
$$ 
The system generated by them is 
called Lauricella's hypergeometric system $E_C (a,b,c)$ of differential equations. 

\begin{Fact}[\cite{HT}, \cite{L}] \label{solution}
The system $E_{C} (a,b,c)$ is a holonomic system of rank $2^m$ with the singular locus 
\begin{align}
\label{sing-locus}
S:= \left( \prod_k x_{k} \cdot R(x)=0 \right) \subset \C^m ,\quad
R(x_1 ,\ldots ,x_m):=\prod_{\vep_1 ,\ldots ,\vep_m =\pm 1} 
\left( 1+\sum_k \vep_k \sqrt{x_k} \right) . 
\end{align}
If $c_1 ,\ldots ,c_m \not\in \Z$, then 
the vector space of solutions to $E_{C} (a,b,c)$ in a simply connected domain in 
$D_C -S$ is spanned by the following $2^m$ functions: 
\begin{align}
\label{series-sol} 
f_I
% =f_{i_1 \cdots i_r} 
:=\prod _{i \in I} x_i ^{1-c_i} \cdot 
F_C \left( a+|I|-\sum _{i\in I} c_i ,b+|I|-\sum _{i\in I} c_i ,c^I ;
x \right) , 
\end{align}
where $I$ is a subset of $\{ 1,\ldots, m \}$, 
and the row vector $c^I =(c_1^I ,\ldots ,c_m^I)$ of $\C^m$ is defined by 
\begin{align*}
  c^I_k =\left\{
    \begin{array}{cl}
      2-c_k & (k\in I),\\
      c_k & (k\not\in I).
    \end{array}
  \right. 
\end{align*}
\end{Fact}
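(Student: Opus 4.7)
The plan is to prove Fact \ref{solution} in three stages: (i) establish holonomicity and compute the rank to be $2^m$ together with the singular locus $S$; (ii) verify that each $f_I$ is a solution by reducing $E_C(a,b,c)$ applied to $f_I$ back to an $E_C$-system with shifted parameters; (iii) conclude linear independence from the distinct leading exponents at the origin.

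For (i), I would first rewrite each generator as a differential operator in $\partial_k$: using $\theta_k^2 = x_k^2 \partial_k^2 + x_k \partial_k$, the operator $P_k := \theta_k(\theta_k + c_k -1) - x_k(\theta + a)(\theta + b)$ has principal symbol $x_k^2 \xi_k^2 - x_k\bigl(\sum_j x_j \xi_j\bigr)^2$. Computing the characteristic variety $V$ of the ideal generated by the $P_k$ in the cotangent bundle, one finds that off the zero locus of $x_1 \cdots x_m \cdot R(x)$ the system $\{\xi_k^2 = (\sum_j x_j \xi_j)^2 / x_k\}_k$ has $2^m$ solutions (parametrized by signs $\vep_k = \pm 1$ encoding the square roots), all Lagrangian. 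This simultaneously yields holonomicity, rank $2^m$, and identifies the singular locus as $S$. Working out that $R(x)$ is the right discriminant is the step that requires the most care, since one must show that outside $(\prod x_k = 0)$ the vanishing of the characteristic ideal (apart from the zero section) is exactly the condition $\prod_{\vep} (1 + \sum_k \vep_k \sqrt{x_k}) = 0$.

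For (ii), I would use the substitution $f = \prod_{i\in I} x_i^{1-c_i} \cdot g$ and compute how $\theta_k$ and $\theta$ transform. A short calculation gives $\theta_k(\prod_i x_i^{1-c_i} g) = \prod_i x_i^{1-c_i}\cdot (\theta_k + \delta_{k,I}(1-c_k)) g$, where $\delta_{k,I} = 1$ if $k \in I$ and $0$ otherwise, and similarly $\theta \mapsto \theta + |I| - \sum_{i \in I} c_i$. Substituting into $P_k$ and simplifying, the $k$-th operator is conjugated to
\begin{align*}
  \theta_k(\theta_k + c_k^I - 1) - x_k(\theta + a + |I| - \tsum_{i\in I} c_i)(\theta + b + |I| - \tsum_{i \in I} c_i),
\end{align*}
which is precisely the $k$-th generator of $E_C(a+|I|-\sum c_i, b+|I|-\sum c_i, c^I)$. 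Since the inner $F_C$ satisfies this shifted system by definition, $f_I$ is a solution of $E_C(a,b,c)$. The case-split between $k \in I$ and $k \notin I$ in checking that $c_k$ is replaced by $c_k^I$ is the computational core; I would do it in two lines and invoke symmetry in $k$.

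For (iii), each $f_I$ admits a power series expansion of the form $\prod_i x_i^{1-c_i}\cdot(1 + O(x))$, so its leading exponent vector at the origin is $(1-c_i)_{i\in I}$ extended by zeros. Since we have assumed $c_k \notin \Z$, these $2^m$ exponent vectors are pairwise distinct modulo $\Z^m$; any nontrivial linear relation among the $f_I$ would equate incompatible leading monomials, so the $f_I$ are linearly independent over $\C$. Combined with the rank $2^m$ from (i), they form a basis in any simply connected subdomain of $D_C - S$. The main obstacle is really in step (i), namely the characteristic variety computation and the identification of $R(x)$ as the discriminant; steps (ii) and (iii) are routine once the transformation law for $\theta_k$ under the substitution is handled carefully.
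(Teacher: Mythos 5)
The paper does not prove this statement: it is stated as a Fact and cited to Hattori--Takayama \cite{HT} and Lauricella \cite{L}, so there is no argument of the paper's own to compare against. I can therefore only assess your outline on its merits. Steps (ii) and (iii) are correct and standard: the conjugation identity $\theta_k \circ x_k^{\la}=x_k^{\la}\circ(\theta_k+\la)$ gives exactly the parameter shift $c_k\mapsto c_k^I$, $a\mapsto a+|I|-\sum_{i\in I}c_i$, $b\mapsto b+|I|-\sum_{i\in I}c_i$, and the $2^m$ leading exponent vectors are pairwise distinct modulo $\Z^m$ because $c_k\notin\Z$, which suffices for linear independence.

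The serious gap is in step (i). You assert that over $x$ outside the zero locus of $\prod_k x_k\cdot R(x)$ the simultaneous symbol equations $\xi_k^2=(\sum_j x_j\xi_j)^2/x_k$ have $2^m$ solutions indexed by signs. They do not. Setting $\eta:=\sum_jx_j\xi_j$, the choice $\xi_k=\vep_k\eta/\sqrt{x_k}$ forces $\eta=\sum_kx_k\xi_k=\eta\sum_k\vep_k\sqrt{x_k}$, hence either $\eta=0$ (and then every $\xi_k=0$) or $1-\sum_k\vep_k\sqrt{x_k}=0$, i.e.\ $x\in S$. Over a generic base point the only common zero of the principal symbols is $\xi=0$, and the holonomic rank is a scheme-theoretic intersection multiplicity $\dim_{\C(x)}\C(x)[\xi]/\mathrm{in}(I)_x$, not a count of distinct cotangent directions. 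The number $2^m$ can be recovered as a B\'ezout degree for a system of $m$ quadrics meeting only at the origin, but this requires two further facts which your sketch takes for granted: that the initial ideal of the full left ideal $I=DP_1+\cdots+DP_m$ is generated by the symbols $\sigma(P_k)$ at a generic point (in general it may be strictly larger, and one must exhibit a Gr\"obner basis for the order filtration), and that the characteristic variety outside the zero section projects onto exactly $(\prod_k x_k\cdot R(x)=0)$ and nothing more. Both are precisely what \cite{HT} establishes, and neither is a routine principal-symbol calculation. Your outline should either cite \cite{HT} for step (i) or replace the sign-count heuristic with a genuine Gr\"obner-basis and characteristic-cycle argument while making clear that this is where the real work lies.
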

Note that the solution (\ref{series-sol}) for $I=\emptyset$ is 
$f(=f_{\emptyset})=F_{C} (a,b,c ;x)$, 
and $R(x)$ is an irreducible polynomial of degree $2^{m-1}$ in $x_1 ,\ldots ,x_m$.

\begin{Fact}[Euler-type integral representation, 
Example 3.1 in \cite{AK}] \label{int-Euler}
For sufficiently small positive real numbers $x_1 ,\ldots ,x_m$, 
if $c_1 ,\ldots ,c_m ,a-\sum c_k \not\in \Z$, 
then $F_C (a,b,c;x)$ admits the following integral representation: 
\begin{align}
  \label{integral}
  F_{C} (a,b,c ;x)  
  =&\frac{\Ga (1-a)}{\prod \Ga (1-c_{k})\cdot \Ga (\sum c_{k} -a-m+1)}  \\
  \nonumber
  & \cdot \int _{\De } \prod t_{k} ^{-c_{k} } \cdot (1-\sum t_{k} )^{\sum c_{k} -a-m} 
  \cdot \left( 1-\sum \frac{x_{k}}{t_{k}} \right) ^{-b} dt_{1} \wedge \cdots \wedge dt_{m} , 
\end{align}
where $\De $ is the twisted cycle made by an $m$-simplex, 
in Sections 3.2 and 3.3 of \cite{AK}. 
\end{Fact}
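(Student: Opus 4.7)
The plan is to derive (\ref{integral}) by expanding the last factor of the integrand in a multiple power series, integrating term by term over the twisted cycle $\De$, and identifying the resulting series with $F_C(a,b,c;x)$ up to the claimed Gamma-function prefactor.

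First I would apply the generalized binomial theorem (valid for $|\sum x_k/t_k|<1$, which holds for sufficiently small positive $x_k$ on the support of $\De$) together with the multinomial theorem to obtain
\[
\Bigl(1 - \sum \frac{x_k}{t_k}\Bigr)^{-b}
= \sum_{n_1,\ldots,n_m \geq 0} \frac{(b,\sum n_k)}{\prod n_k!} \prod \frac{x_k^{n_k}}{t_k^{n_k}}.
\]
Substituting this into the integrand of (\ref{integral}) and interchanging sum and integral, the right-hand side becomes $\sum_{n_1,\ldots,n_m} \frac{(b,\sum n_k)}{\prod n_k!}\,I_n \prod x_k^{n_k}$, where
\[
I_n := \int _{\De} \prod t_k^{-c_k - n_k} \cdot (1 - \sum t_k)^{\sum c_k - a - m}\, dt_1 \wedge \cdots \wedge dt_m.
\]

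The second step is to evaluate $I_n$. By the regularized Dirichlet (Selberg-type) integral formula for the twisted $m$-simplex of \cite{AK}, which holds meromorphically in the parameters,
\[
I_n = \frac{\prod \Ga(1 - c_k - n_k)\cdot \Ga(\sum c_k - a - m + 1)}{\Ga(1 - a - \sum n_k)}.
\]
Applying the reflection identity $\Ga(1-z-n)=(-1)^n \Ga(1-z)/(z,n)$ to each $\Ga(1-c_k-n_k)$ and to $\Ga(1-a-\sum n_k)$ makes the sign $(-1)^{\sum n_k}$ cancel and rewrites $I_n$ as
\[
I_n = \frac{(a,\sum n_k)\cdot \prod \Ga(1-c_k)\cdot \Ga(\sum c_k - a - m + 1)}{\Ga(1-a)\cdot \prod (c_k,n_k)}.
\]
Plugging back and comparing with Notation \ref{k}(i), the entire sum reassembles as the parameter-only prefactor $\frac{\prod \Ga(1-c_k)\cdot\Ga(\sum c_k-a-m+1)}{\Ga(1-a)}$ times $F_C(a,b,c;x)$, from which (\ref{integral}) follows after division by this prefactor.

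The main obstacle is the evaluation step: the classical Dirichlet formula requires $\re(1-c_k-n_k)>0$ and $\re(\sum c_k-a-m+1)>0$, both of which fail for generic parameters once $n_k$ is large. This is precisely what the twisted cycle $\De$ of Sections~3.2--3.3 of \cite{AK} is designed to handle, since integration against $\De$ extends the Dirichlet evaluation meromorphically in the parameters via the branch structure of $\prod t_k^{-c_k}(1-\sum t_k)^{\sum c_k-a-m}$. A secondary technical point is justifying the termwise integration: one checks that the multinomial expansion of $(1-\sum x_k/t_k)^{-b}$ converges uniformly on a neighborhood of (the support of) $\De$ once the $x_k>0$ are chosen small enough, which is the role of the hypothesis ``sufficiently small positive $x_k$'' in the statement.
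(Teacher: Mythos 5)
Your proposal addresses a statement that the paper itself does not prove: Fact~\ref{int-Euler} is quoted verbatim from Example~3.1 of \cite{AK}, and the only ``proof'' in the paper is a citation. What you have written is a reconstruction, from scratch, of the standard argument behind that citation: expand $(1-\sum x_k/t_k)^{-b}$ by the generalized binomial plus multinomial theorem, integrate term by term over the regularized simplex $\De$ using the Dirichlet (beta) integral meromorphically continued via the twisted cycle, and apply the reflection identity $\Gamma(1-z-n)=(-1)^n\Gamma(1-z)/(z,n)$ to both $\Gamma(1-c_k-n_k)$ and $\Gamma(1-a-\sum n_k)$ so that the Pochhammer symbols $(a,\sum n_k)$ and $(c_k,n_k)$ reassemble into the $F_C$ series. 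All of these steps are sound, and your treatment of the two technical points (why the naive Dirichlet formula fails for large $n_k$ and is rescued by the regularization, and why termwise integration is legitimate when $|\sum x_k/t_k|<1$ uniformly on the support of $\De$) is correct. So yes, this is a valid proof, and it is a genuinely different contribution from what the paper offers, since the paper offers none.

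One thing you should have flagged rather than glossed over: your (correct) computation produces the prefactor
\[
\frac{\Gamma(1-a)}{\prod\Gamma(1-c_k)\,\Gamma\!\left(\textstyle\sum c_k - a - m + 1\right)},
\]
whereas the displayed formula (\ref{integral}) has $\Gamma(\sum c_k-a-m-1)$ in the denominator. These do not agree, and you silently asserted that your result ``follows'' from the statement. In fact your version is the right one: the statement has a typo. You can see this internally in the paper by specializing Fact~\ref{solution-cycle} to $I=\emptyset$, which gives
\[
\int_\De u_{x}\vph \;=\; \Phi_x(\De) \;=\; \frac{\prod_k\Gamma(1-c_k)\,\Gamma(\sum_k c_k-a-m+1)}{\Gamma(1-a)}\,F_C(a,b,c;x),
\]
in agreement with your computation and with the integrand $u_x\vph=\prod t_k^{-c_k}(1-\sum t_k)^{\sum c_k-a-m}(1-\sum x_k/t_k)^{-b}\,dt_1\wedge\cdots\wedge dt_m$. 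A careful proof should explicitly note and correct this discrepancy rather than claim a clean match with the stated formula.
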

This twisted cycle coincides with $\De_{\emptyset}=\De$ 
introduced in Section \ref{section-solution-cycle}. 
In the case of $m=2$, we show a figure of $\De$ 
in Example \ref{ex-cycle}.

\section{Twisted homology groups and local systems
}\label{section-THG}
For twisted homology groups and the intersection form 
between twisted homology groups, refer to 
\cite{AK}, \cite{Y}, or Section 3 of \cite{G-FC}. 

Put $X:=\C^m-S$ and 
\begin{align*}
  & v(t):=1-\sum_k t_k ,\quad 
  w(t,x):= \prod_k t_k \cdot \left( 1-\sum_k \frac{x_k}{t_k} \right) , \\
  & \FX :=\left\{ (t,x)\in \C^m \times X \ \left| \ 
  \prod_k t_k \cdot v(t) \cdot w(t,x) \neq 0 \right. \right\} .
\end{align*}
There is a natural projection 
$$
pr : \FX \to X ;\ (t,x)\mapsto x,
$$
and we define $T_x := pr^{-1} (x)$ for any $x\in X$. 
We regard $T_x$ as an open submanifold of $\C^m$ 
by the coordinates $t=(t_1 ,\ldots ,t_m)$. 
We consider the twisted homology groups on $T_x$ 
with respect to the multivalued function 
\begin{align*}
  u_x (t) :=& \prod t_k ^{1-c_k +b} \cdot v(t)^{\sum c_{k} -a-m+1} 
  w(t,x) ^{-b} \\
  =& \prod t_k ^{1-c_k } \cdot \left( 1-\sum t_{k} \right)^{\sum c_{k} -a-m+1} 
  \cdot \left( 1-\sum \frac{x_{k}}{t_{k}} \right) ^{-b} 
\end{align*}
(the second equality holds under the coordination of branches). 
We denote the $k$-th twisted homology group by $H_k (T_x, u_x)$, 
and locally finite one by $H^{lf}_k (T_x ,u_x)$. 
\begin{Facts}[\cite{AK}, \cite{G-FC}]
  \label{fact-vanish}
  \begin{enumerate}[(i)]
  \item $H_k (T_x ,u_x)=0 ,\ H^{lf}_k (T_x ,u_x)=0$, 
    for $k \neq m$. 
  \item $\dim H_m (T_x ,u_x)=2^m$. 
  \item The natural map $H_m (T_x ,u_x) \to H^{lf}_m (T_x ,u_x)$ 
    is an isomorphism (the inverse map is called the regularization). 
  \end{enumerate}
\end{Facts}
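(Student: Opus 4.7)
The plan is to regard $T_x$ as the complement in $\C^m$ of $m+2$ divisors: the coordinate hyperplanes $D_k := (t_k = 0)$ for $1 \leq k \leq m$, the simplex hyperplane $D_0 := (v(t) = 0)$, and the hypersurface $D_\infty := (w(t,x) = 0)$. Then $T_x$ is smooth affine of complex dimension $m$, and $u_x$ defines a rank-one local system $\mathcal{L}_{u_x}$ whose monodromy around each boundary component can be read off directly from the exponents $1-c_k$, $\sum c_k - a - m + 1$, and $-b$ appearing in $u_x$. Under the genericity assumption on $(a,b,c)$ each such monodromy is non-trivial, and this is exactly the input that makes (i)--(iii) applicable.

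For (i), by Andreotti--Frankel the smooth affine variety $T_x$ has the homotopy type of a CW-complex of real dimension at most $m$, so $H_k(T_x, u_x) = H^{lf}_k(T_x, u_x) = 0$ for $k > m$ without any assumption on the local system. For $k < m$ I would invoke the Aomoto--Kita vanishing theorem: for a generic rank-one local system on a smooth affine variety, cohomology is concentrated in the middle degree. Applying Poincar\'e--Lefschetz duality in the form $H^j(T_x, \mathcal{L}) \cong H^{lf}_{2m-j}(T_x, \mathcal{L}^\vee)$, and the analogous pairing of ordinary homology with compactly supported cohomology, converts this into the desired vanishing of $H_k$ and $H^{lf}_k$ for $k < m$.

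For (ii), item (i) together with additivity of the Euler characteristic for constructible sheaves gives $\dim H_m(T_x, u_x) = (-1)^m \chi(T_x)$. I would compute $\chi(T_x)$ either by inclusion--exclusion on the boundary divisor, or, following \cite{G-FC}, by exhibiting $2^m$ explicit twisted cycles $\De_I$ indexed by subsets $I \subset \{1, \ldots, m\}$, built from the bounded real chambers of the associated arrangement; linear independence is then checked by computing the intersection-pairing matrix against a dual family and verifying non-degeneracy. For (iii), the regularization isomorphism $H_m \to H^{lf}_m$ is a formal consequence of both sides being concentrated in the middle degree, so the natural comparison map must be an isomorphism. One can invert it concretely by replacing each locally finite chain by a compactly supported Pochhammer-type chain near every boundary stratum, the normalizing factors $(1 - e^{2\pi\sqrt{-1}\alpha})^{-1}$ being available precisely because each stratum has non-trivial monodromy.

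The main obstacle is the dimension count in (ii): because $D_\infty$ is a genuine hypersurface rather than a hyperplane, inclusion--exclusion on the boundary arrangement is not immediate, and most of the work in \cite{G-FC} goes into producing the explicit basis $\{\De_I\}$ together with the intersection computation that certifies its independence. The vanishing statement (i) and the regularization isomorphism (iii) are comparatively formal once the rank in (ii) has been pinned down and the non-triviality of the monodromy around every stratum of the boundary arrangement has been verified.
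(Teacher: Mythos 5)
The paper does not prove this statement: it is flagged as a ``Facts'' item, quoted verbatim from \cite{AK} and \cite{G-FC}, so there is no in-paper argument to compare against. Your plan is in the right spirit --- reduce everything to the twisted cohomology of the affine complement, use the Stein structure for the easy half, and use genericity of the local system for the hard half --- but it contains one genuine error and one under-acknowledged gap.

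The error is in the division of labour for $H^{lf}_k$. Andreotti--Frankel (Stein-ness) gives $H^j(T_x,\mathcal L)=0$ for $j>m$ and $H_k(T_x,\mathcal L)=0$ for $k>m$, \emph{for any} local system. Poincar\'e duality on the oriented $2m$-manifold $T_x$ reads $H^{lf}_k(T_x,\mathcal L)\cong H^{2m-k}(T_x,\mathcal L)$, so the vanishings that come for free are $H_k=0$ for $k>m$ and $H^{lf}_k=0$ for $k<m$. The statements that genuinely need genericity are $H_k=0$ for $k<m$ and $H^{lf}_k=0$ for $k>m$. You have written the opposite for the $H^{lf}$ side: $H^{lf}_k=0$ for $k>m$ does \emph{not} follow from Andreotti--Frankel ``without any assumption on the local system'' --- for the trivial system $H^{lf}_{2m}(T_x,\C)\cong H^0(T_x,\C)\neq 0$ --- and your duality step actually produces vanishing for $k>m$, not $k<m$ as you claim. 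The two ranges for $H^{lf}$ are simply swapped in your write-up.

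The under-acknowledged gap is the appeal to an ``Aomoto--Kita vanishing theorem'' for generic rank-one local systems. The standard theorem of that name concerns complements of hyperplane arrangements, and here the boundary divisor $(w(t,x)=0)$ is a degree-$m$ hypersurface, not a hyperplane. You do flag this when discussing (ii), but it is equally a problem for (i): the concentration in middle degree in this non-arrangement situation is precisely what \cite{G-FC} has to establish (by explicit construction and non-degeneracy of the intersection matrix on the $\De_I$'s), and it cannot be cited as an off-the-shelf hyperplane theorem. Similarly, for (iii) mere concentration in degree $m$ does not by itself force the natural map $H_m\to H^{lf}_m$ to be an isomorphism; what makes the regularization work is that the local monodromy around every boundary stratum meeting the closure of a chamber is non-trivial, which is where the genericity conditions in Remark~\ref{generic} enter. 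So the logical dependence should be presented as: genericity \(\Rightarrow\) non-trivial boundary monodromy \(\Rightarrow\) (i) and (iii), with (ii) coming from the Euler characteristic or from the explicit basis --- rather than (iii) being ``formal.''
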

Hereafter, we identify $H^{lf}_m (T_x ,u_x)$ with $H_m (T_x ,u_x)$, 
and call an $m$-dimensional twisted cycle by a twisted cycle simply. 
Note that the intersection form $I_h$ is defined between 
$H_m (T_x ,u_x)$ and $H_m (T_x ,u_x^{-1})$. 

For $x,x' \in X$ and a path $\tau$ in $X$ from $x$ to $x'$, 
there is the canonical isomorphism 
$$
\tau_* : H_m (T_x ,u_x) \to H_m (T_{x'} ,u_{x'}) .
$$
Hence the family 
$$
\CH:=\bigcup_{x \in X} H_m (T_x ,u_x)
$$
forms a local system on $X$. 

Let $\de$ be a twisted cycle in $T_x$ for a fixed $x$. 
If $x'$ is a sufficiently close point to $x$, there is 
a unique twisted cycle $\de '$ such that 
$\int_{\de '} u_{x'} \vph$ is obtained by 
the analytic continuation of $\int_{\de} u_x \vph$, where
$$
\vph :=\frac{dt_{1} \wedge \cdots \wedge dt_{m} }{\prod t_k \cdot (1-\sum t_{k} )} .
$$ 
Thus we can regard the integration $\int_{\de} u_x \vph$ as a holomorphic function in $x$. 
Fact \ref{int-Euler} means that the integral 
$\int_{\De} u_x \vph 
% \vph :=\frac{dt_{1} \wedge \cdots \wedge dt_{m} }{\prod t_k \cdot (1-\sum t_{k} )} 
$
represents $F_C(a,b,c;x)$ modulo Gamma factors. 
Let $Sol$ be the sheaf on $X$ whose sections are 
holomorphic solutions to $E_C (a,b,c)$. 
The stalk $Sol_x$ at $x\in X$ is the space of 
local holomorphic solutions near $x$. 
\begin{Fact}[\cite{G-FC}]\label{iso-homology-solution}
  For any $x\in X$, 
  $$
  \Phi_x :H_m (T_x ,u_x) \to Sol_x ;\ 
  \de \mapsto \int_{\de} u_x \vph
  $$
  is an isomorphism. 
\end{Fact}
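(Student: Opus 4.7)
The plan is to establish $\Phi_x$ as a well-defined linear map, and then to conclude via a dimension count combined with an explicit matching of bases on the two sides.

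First, I would verify well-definedness. The pairing $\de \mapsto \int_\de u_x \vph$ makes sense because Facts \ref{fact-vanish}(iii) lets us represent any class by a finite regularized chain, so the integral converges; it descends to twisted homology because $\vph$ is a top-degree form and is therefore automatically $\nabla$-closed for $\nabla = d + d\log u_x \wedge {}\cdot\,$. The crucial point is that the resulting function of $x$ lies in $Sol_x$. I would check this by applying each operator $\theta_k(\theta_k + c_k -1) - x_k(\theta + a)(\theta + b)$ under the integral sign and verifying that its action on $u_x \vph$ produces an exact form on $T_x$; this form then integrates to zero against $\de$ by Stokes' theorem for twisted homology. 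Holomorphic dependence on $x$ is already furnished by the parametrized continuation of cycles described immediately before the statement.

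Second, Fact \ref{solution} gives $\dim_{\C} Sol_x = 2^m$, and Facts \ref{fact-vanish}(ii) gives $\dim_{\C} H_m(T_x, u_x) = 2^m$, so it suffices to prove $\Phi_x$ is surjective. I would produce the $2^m$ twisted cycles $\De_I$, indexed by subsets $I \subset \{1, \ldots, m\}$, from \cite{G-FC} and prove that each $\Phi_x(\De_I)$ is a nonzero scalar multiple of the series solution $f_I$ in (\ref{series-sol}). The case $I = \emptyset$ is exactly Fact \ref{int-Euler}. For nonempty $I$, the idea is to perform a change of variables of the form $t_i \mapsto x_i/t_i$ for $i \in I$, which rewrites the integral as a standard simplex integral whose parameters are shifted to $\bigl(a + |I| - \sum_{i \in I} c_i,\ b + |I| - \sum_{i \in I} c_i,\ c^I\bigr)$, the Jacobian of the substitution producing exactly the prefactor $\prod_{i \in I} x_i^{1-c_i}$. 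If $\De_I$ is arranged so that this substitution carries it to the standard simplex cycle $\De$ for the new parameters, then Fact \ref{int-Euler} applied to the transformed integral yields $\Phi_x(\De_I) \propto f_I$ with an explicit nonzero Gamma-factor constant.

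The main obstacle is the careful construction of the $\De_I$ as explicit twisted chains and the bookkeeping of branches of $u_x$ across the changes of variables, so that every Gamma-factor normalization comes out consistently and each pairing $(\De_I, f_I)$ matches up to a nonzero constant. Once that matching is in hand, the linear independence of $\{f_I\}_I$ in $Sol_x$ forces $\{\Phi_x(\De_I)\}_I$ to span $Sol_x$, and combined with the dimension equality this yields the isomorphism.
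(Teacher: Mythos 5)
Your plan is essentially the approach the paper relies on. The paper itself does not prove this Fact; it cites \cite{G-FC}, and the ingredients that make your argument go are exactly those recorded elsewhere in the paper as imports from \cite{G-FC}: the vanishing and rank statements of Fact \ref{fact-vanish}, the Euler integral for $I=\emptyset$ in Fact \ref{int-Euler}, and the identity $\Phi_x(\De_I)=(\text{nonzero Gamma factors})\cdot f_I$ stated as the first part of Fact \ref{solution-cycle}. Your change of variables $t_i\mapsto x_i/t_i$ for $i\in I$ is precisely the map $\iota_I$ used in Section \ref{section-solution-cycle} to define $\De_I$, and combining the resulting proportionality $\Phi_x(\De_I)\propto f_I$ with the equal dimensions $\dim Sol_x=\dim H_m(T_x,u_x)=2^m$ and the linear independence of $\{f_I\}$ from Fact \ref{solution} yields the isomorphism, exactly as you say. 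The only caveat worth flagging is that Fact \ref{solution} (and hence the linear independence of $\{f_I\}$ on which your surjectivity step rests) requires $c_1,\ldots,c_m\notin\Z$, and the explicit Gamma-factor constants are nonzero only under the genericity assumptions on $(a,b,c)$ collected in Remark \ref{generic}; you should state those hypotheses explicitly rather than leave them implicit.
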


\section{Twisted cycles corresponding to the solutions $f_I$}
\label{section-solution-cycle}
Fact \ref{solution} implies that $Sol_x$ is a $\C$-vector space of 
dimension $2^m$ and spanned by $f_I$'s, for $x\in D_C -S$. 
In \cite{G-FC}, we construct twisted cycles $\De_I$ 
that correspond to $f_I$'s, 
for all subsets $I$ of $\{ 1,\ldots ,m \}$. 
In this section, 
we review the construction of $\De_I$ briefly. 

We construct the twisted cycles $\De_I \in H_m (T_x ,u_x)$, 
for fixed sufficiently small positive real numbers $x_1,\ldots ,x_m$. 
We set $J:=I^c=\{ 1,\ldots ,m \} -I$. 
We consider 
\begin{align*}
  M _I :=\C ^m 
  -\left( \bigcup_k (s_k =0) \cup (v_I =0) \cup (w_I =0) \right) ,
\end{align*}
where $v_I$ and $w_I$ are polynomials in $s_1 ,\ldots ,s_m$ defined by 
\begin{align*}
  v_I :=\prod_{i\in I} s_i \cdot 
  \left( 1-\sum_{i\in I} \frac{x_i}{s_i}-\sum_{j\in J} s_j \right) , \ 
  w_I :=\prod_{j\in J} s_j \cdot 
  \left( 1-\sum_{i\in I} s_i-\sum_{j\in J} \frac{x_j}{s_j} \right) .  
\end{align*}
Let $u_I$ be 
a multivalued function on $M_I$ defined as 
\begin{align*}
u_I :=\prod _{k} s_{k} ^{C_k} \cdot v_I ^A \cdot w_I ^B , 
\end{align*}
where
\begin{align*}
  &A:=\sum c_{k} -a-m+1,\quad B:=-b,  \\
  &C_i :=c_i -1 -A \ (i \in I),\quad C_j :=1-c_j -B \ (j\in J).
\end{align*}
Note that if $I=\emptyset$, then $u_{\emptyset}$ and $M_{\emptyset}$ 
coincide with $u_x$ and $T_x$ in Section \ref{section-THG}, respectively. 
We construct the twisted cycle $\tilde{\De}_I$ 
in $M_I$ with respect to $u_I$. 
Let $\vep$ be a positive real number satisfying $\vep <\frac{1}{m+1}$ and 
$x_k <\frac{\vep ^2}{m}$ (we use the assumption $\vep_1=\cdots =\vep_m =\vep$ 
in Section 4 of \cite{G-FC}). 
We consider the closed subset 
\begin{align*}
  \sigma_{I} :=\left\{ (s_1 ,\ldots ,s_m )\in \R^m \ \Bigg| \ 
    s_k \geq \vep ,\ 
    \begin{array}{l}
      1-\sum_{i \in I} s_i \geq \vep ,\\
      1-\sum_{j \in J} s_j \geq \vep
    \end{array}
  \right\} 
\end{align*}
which is a direct product of an $|I|$-simplex and an $(m-|I|)$-simplex, and  
is contained in the bounded domain 
$$
\left\{ (s_1 ,\ldots ,s_m )\in \R^m \ \Bigg| \ 
  s_k >0 ,\ 
  \begin{array}{l}
    1-\sum_{i \in I} \frac{x_i}{s_i} -\sum_{j\in J} s_j >0 ,\\
    1-\sum_{i \in I} s_i -\sum_{j\in J} \frac{x_j}{s_j} >0
  \end{array}
\right\} .
$$
The orientation of $\sigma _I$ 
is induced from the natural embedding $\R^m \subset \C^m$. 
We construct a twisted cycle from $\sigma _I \otimes u_I$. 
Set $L_{1} :=(s_1 =0)$, $\ldots$, $L_{m} :=(s_m =0)$, 
$L_{m+1} :=(1-\sum_{i\in I} s_i=0)$, $L_{m+2} :=(1-\sum_{j\in J} s_j =0)$, 
and let $U(\subset \R ^m )$ be the bounded chamber surrounded by $L_{1} ,\ldots ,\ L_{m} ,\ L_{m+1} ,\ L_{m+2}$, 
then $\sigma _I$ is contained in $U$. 
Note that we do not consider the hyperplane $L_{m+1}$ (resp. $L_{m+2}$), 
when $I=\emptyset$ (resp. $I=\{ 1,\ldots ,m \}$). 
For $K\subset \{ 1, \ldots ,m+2 \}$, we consider $L_K :=\cap _{p\in K} L_p ,\ U_K :=\overline{U} \cap L_K$ 
and $T_K :=\vep $-neighborhood of $U_K$. 
Then we have 
$$
\sigma _I =U-\bigcup_K T_K .
$$ 
Using these neighborhoods $T_K$, we can construct a twisted cycle $\tilde{\De} _I$ 
in the same manner as Section 3.2.4 of \cite{AK}. 

We briefly explain the expression of $\tilde{\De}_I$. 
For $p=1,\ldots ,m+2$, let $\l_p$ be the $(m-1)$-face of $\si_I$ 
given by $\si_I \cap \overline{T_p}$, 
and let $S_p$ be a positively oriented circle with radius $\vep$ 
in the orthogonal complement of $L_p$ starting from the projection of 
$l_p$ to this space and surrounding $L_p$. 
Then $\tilde{\De}_I$ is written as
\begin{align*}
  \sigma _I \otimes u_I
  +\sum _{ \emptyset \neq K \subset \{ 1,\ldots , m+2\}} 
  \ \prod_{p\in K} \frac{1}{d_p} 
  \cdot \left( \Biggl( \bigcap_{p\in K} l_p \Biggr) \times \prod_{p\in K} S_p \right) \otimes u_I,
\end{align*}
where 
\begin{align*}
  d_i :=\ga_i -1 \ (i\in I),\ d_j :=\ga_j^{-1} -1 \ (j\in J),\ 
  d_{m+1} :=\be^{-1} -1,\ d_{m+2} :=\al^{-1} \prod \ga_k -1, 
\end{align*}
and $\al :=e^{2\pi \sqrt{-1}a} ,\ \be :=e^{2\pi \sqrt{-1}b},\ \ga_k :=e^{2\pi \sqrt{-1}c_k}$. 
We often omit ``$\otimes u_I$''. 
\begin{Ex}\label{ex-cycle}
  In the case of $m=2$ and $I=\emptyset$, we have 
    \begin{align*}
      \tilde{\De} =&\si +\frac{S_1 \times l_1 }{1-\ga_1 ^{-1}} 
      +\frac{S_2 \times l_2}{1-\ga_2 ^{-1}}  
      +\frac{S_4 \times l_4}{1-\al^{-1} \ga_1 \ga_2}  \\
      &+\frac{S_1 \times S_2}{(1\! - \!\ga_1 ^{-1})(1\! -\! \ga_2 ^{-1})} 
      +\frac{S_2 \times S_4}{(1\! -\! \ga_2 ^{-1})(1\! -\! \al^{-1} \ga_1 \ga_2)}   
      +\frac{S_4 \times S_1}{(1\! -\! \al^{-1} \ga_1 \ga_2 )(1\! -\! \ga_1 ^{-1})} , 
    \end{align*}
    where the $1$-chains $l_j$ satisfy $\pa \si =l_1 +l_2 +l_4$ 
    (see Figure \ref{picture-cycle}), 
    and the orientation of each direct product is induced 
    from those of its components. 
    Note that the face $l_3$ does not appear in this case.   
    \begin{figure}[h]
    \centering{
      \includegraphics[scale=1.0]{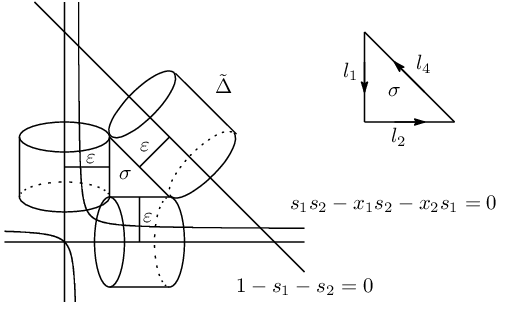} }
    \caption{$\tilde{\De}(=\De )$ for $m=2$. \label{picture-cycle}}
  \end{figure}
\end{Ex}

By using the bijection 
\begin{align*}
  \iota _I :M_I \rightarrow T_x ; \quad &
  \iota _I (s_1 ,\ldots ,s_m ):=(t_1 ,\ldots ,t_m ),\\ 
  &t_i =\frac{x_i}{s_i}\ (i\in I) ,\ t_j =s_j \ (j\in J) ,
\end{align*}
we define the twisted cycle $\De_I$ in $T_x(=M_{\emptyset})$ 
as $\De _I :=(-1)^{|I|} (\iota _I )_{*} (\tilde{\De }_I)$. 
Note that $\iota_I (\si_I)$ is contained in the bounded domain 
$\{ (t_1 ,\ldots ,t_m) \in \R^m \mid 
t_1,\ldots ,t_m,v(t),w(t,x)>0 \}$ which is denoted by $D_{1\cdots m}$ 
in Section \ref{section-MR}. 

We regard $\{ \De_I \} _I$ as the $2^m$ twisted cycles $\De_I$'s 
arranged as 
$(\De ,\De_1 ,\De_2 ,\ldots ,\De_m ,\De_{12} ,\De_{13} ,\ldots ,\De_{1\cdots m})$. 
For a twisted cycle $\de$ with respect to $u_x$, we denote by $\de^{\vee}$ 
the twisted cycle with respect to $u_x^{-1}$, 
which is defined by the same construction as used for $\de$. 
\begin{Fact}[\cite{G-FC}]\label{solution-cycle}
  We have 
  \begin{align*}
    \Phi_x (\De_I)
    =\frac{\prod_{i\in I} \Ga (c_i -1) \cdot \prod_{j\not\in I} \Ga (1-c_j ) 
      \cdot \Ga (\sum_k c_k -a-m+1) \Ga (1-b)}
    {\Ga (\sum_{i\in I} c_i -a-|I|+1) \Ga (\sum_{i\in I} c_i -b-|I|+1)} 
    \cdot f_I. 
  \end{align*}
  The intersection matrix $H:=\left( I_h (\De_I ,\De_{I'} ^{\vee}) \right)_{I,I'}$ 
  is diagonal. Further, the $(I,I)$-entry $H_{I,I}$ of $H$ is
  \begin{align*}
    H_{I,I}
    = (-1) ^{|I|} \cdot \frac{\prod_{j\not\in I} \ga_j 
      \cdot ( \al -\prod_{i\in I} \ga_i) (\be -\prod_{i\in I} \ga_i ) }
    {\prod_k  (\ga_k -1) \cdot ( \al -\prod _k  \ga_k ) (\be -1)} .
  \end{align*}
  Therefore, the $\De_I$'s form a basis of $H_m (T_x ,u_x)$. 
\end{Fact}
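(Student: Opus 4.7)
My plan is to establish three things in turn: the Gamma-factor formula for $\Phi_x(\De_I)$, the diagonal structure of $H$ with the stated entries, and the basis claim as an immediate consequence. For $\Phi_x(\De_I)=\int_{\De_I}u_x\vph$, I would insert $\De_I=(-1)^{|I|}(\iota_I)_*(\tilde{\De}_I)$ and pull back along $\iota_I$. Direct substitution $t_i=x_i/s_i$ for $i\in I$, $t_j=s_j$ for $j\in J$, together with the identities $v(t)=v_I/\prod_{i\in I}s_i$ and $1-\sum_k x_k/t_k=w_I/\prod_{j\in J}s_j$, gives $\iota_I^*u_x=\prod_{i\in I}x_i^{1-c_i}\cdot u_I$. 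Handling $\iota_I^*\vph$ in the same way (the sign $(-1)^{|I|}$ from $dt_i=-x_i\,ds_i/s_i^2$ cancels the $(-1)^{|I|}$ in $\De_I$), the integrand on $\tilde{\De}_I$ becomes $\prod_{i\in I}x_i^{1-c_i}$ times the natural Euler integrand for $F_C$ with shifted parameters $a+|I|-\sum_{i\in I}c_i$, $b+|I|-\sum_{i\in I}c_i$, $c^I$. Applying Fact \ref{int-Euler} to this shifted integral and multiplying by $\prod_{i\in I}x_i^{1-c_i}$ reproduces $f_I$ together with the asserted Gamma-factor ratio.

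For the intersection matrix, the key geometric point is the localization of supports: on $\iota_I(\sigma_I)\subset T_x$, the coordinate $t_k$ is of order $x_k<\vep^2/m$ whenever $k\in I$, while $t_k\in[\vep,1-\vep]$ whenever $k\not\in I$. For any $I\neq I'$, picking $k\in I\triangle I'$ shows that the ranges of $t_k$ on $\iota_I(\sigma_I)$ and on $\iota_{I'}(\sigma_{I'})$ fall into disjoint subintervals of $(0,\infty)$; the same remains true after adjoining the $\vep$-tubular correction pieces $(\bigcap_{p\in K}l_p)\times\prod_{p\in K}S_p$, since each of these lies in a $2\vep$-neighborhood of the corresponding $\iota_I(\sigma_I)$. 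Hence $\De_I$ and $\De_{I'}^\vee$ have disjoint supports and $I_h(\De_I,\De_{I'}^\vee)=0$ for $I\neq I'$. For the diagonal entry, I would apply the standard self-intersection formula for a regularized chamber twisted cycle over the product-of-two-simplices $\sigma_I$ (as in Section 3.2 of \cite{AK}): each vertex of $\sigma_I$ contributes a local factor built from $1/d_p$-type terms over the incident hyperplanes, and summing and translating the monodromy of $u_I$ around each $L_p$ back into the parameters $\al,\be,\ga_k$ yields the denominator $\prod_k(\ga_k-1)\cdot(\al-\prod_k\ga_k)(\be-1)$; the numerator $\prod_{j\not\in I}\ga_j\cdot(\al-\prod_{i\in I}\ga_i)(\be-\prod_{i\in I}\ga_i)$ and the sign $(-1)^{|I|}$ then come out upon combining with the two $(-1)^{|I|}$ factors in $\De_I,\De_I^\vee$ and the orientation contribution of $\iota_I$.

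The most delicate step, in my view, is making the off-diagonal vanishing fully rigorous: even with disjoint open chambers, the full regularized chains carry low-dimensional tubular components near every stratum $L_K$, and one must check that no pair of such components from $\De_I$ and $\De_{I'}^\vee$ produces a nonzero residue. The cleanest way is to observe that the strata entering the regularization of $\De_I$ are precisely those of $\{L_1,\ldots,L_{m+2}\}$ meeting $\iota_I(\sigma_I)$, and for $I\neq I'$ no common incidence point of a $\De_I$-tube and a $\De_{I'}^\vee$-tube can occur once $\vep$ is chosen sufficiently small (in particular smaller than the gap between the two disjoint $t_k$-ranges). Once the diagonality of $H$ and the nonvanishing of all its diagonal entries under the generic parameter assumption (Remark \ref{generic}) are secured, linear independence of $\{\De_I\}_I$ follows from $\det H\neq 0$, and since $\dim H_m(T_x,u_x)=2^m$ by Fact \ref{fact-vanish}, the collection $\{\De_I\}_I$ is a basis.
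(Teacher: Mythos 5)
This statement is a quoted result (Fact~\ref{solution-cycle}, attributed to \cite{G-FC}); the present paper does not prove it, so there is no internal proof to compare against. Judging your proposal on its own merits: the skeleton is right, but two steps are treated as if they were immediate when in fact they carry most of the work.

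On the Gamma-factor formula: the pullback $\iota_I^*u_x=\prod_{i\in I}x_i^{1-c_i}\cdot u_I$ and the cancellation of $(-1)^{|I|}$ against the Jacobian sign are both correct. But your next move, ``applying Fact~\ref{int-Euler} to this shifted integral,'' is not available as stated. Fact~\ref{int-Euler} is a statement about the integral over the specific twisted cycle $\De=\De_\emptyset$, which is built from a single $m$-simplex. After pulling back, you are integrating over $\tilde{\De}_I$, which is built from $\si_I$, a product of an $|I|$-simplex and an $(m-|I|)$-simplex inside $M_I$; this is not the chamber entering Fact~\ref{int-Euler}, and the two arrangements $v_I(s)=0$, $w_I(s)=0$ in $M_I$ do not collapse to the Euler configuration $1-\sum s_k=0$, $1-\sum x_k/s_k=0$ used there. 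One must either perform a separate direct evaluation of $\int_{\tilde{\De}_I}u_I\,\iota_I^*\vph$ (typically by expanding the factor $1-\sum_{i\in I}x_i/s_i-\sum_{j\in J}s_j$ raised to a negative power in a multi-series and integrating term by term over the product of simplices, producing the displayed ratio of Gammas via Beta integrals), or exhibit a further change of variables that reduces $\tilde{\De}_I$ to the normalized Euler cycle for the shifted parameters. Either way, this is a genuine calculation, not a citation.

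On the diagonal structure of $H$: the disjoint-support argument is geometrically sound. For $k\in I\setminus I'$ the support of $\De_I$ has $|t_k|\leq x_k/\vep<\vep/m$ (including all tube components, since the Jacobian of $s_k\mapsto x_k/s_k$ has modulus $x_k/|s_k|^2<1/m$ there), while every piece of $\De_{I'}^{\vee}$ has $|t_k|\geq\vep$ (the chamber forces $s_k\geq\vep$, and the $S_k$-tube has $|s_k|=\vep$). So the separation is by an honest gap once $\vep<1$ and $x_k<\vep^2/m$, and the vanishing $I_h(\De_I,\De_{I'}^\vee)=0$ for $I\neq I'$ follows. What is still missing is the value of $H_{I,I}$. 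Your description (``sum over vertices, translate $d_p$ into $\al,\be,\ga_k$'') is the correct template, but the formula
\begin{align*}
H_{I,I}=(-1)^{|I|}\,\frac{\prod_{j\not\in I}\ga_j\cdot(\al-\prod_{i\in I}\ga_i)(\be-\prod_{i\in I}\ga_i)}{\prod_k(\ga_k-1)\cdot(\al-\prod_k\ga_k)(\be-1)}
\end{align*}
has to come out of the \cite{KY} self-intersection computation for the specific product-of-two-simplices arrangement, correctly distributing the exponents $c_i-1$ (for $i\in I$), $1-c_j$ (for $j\in J$), $-b$, and $\sum c_k-a-m+1$ over the $m+2$ bounding hyperplanes and accounting for the orientation factor from $\iota_I$. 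That bookkeeping is where the numerator $\prod_{j\not\in I}\ga_j\cdot(\al-\prod_{i\in I}\ga_i)(\be-\prod_{i\in I}\ga_i)$ actually arises, and your sketch does not carry it far enough to be checked. The final basis assertion does follow trivially from $\det H\neq 0$ (which uses the genericity in Remark~\ref{generic}) and $\dim H_m(T_x,u_x)=2^m$, so that part is fine.
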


\section{Monodromy representation}
\label{section-MR}
Put $\dot{x}:=\left( \frac{1}{2m^2},\ldots ,\frac{1}{2m^2} \right) \in X$. 
For $\rho \in \pi_1 (X,\dot{x})$ and $g \in Sol_{\dot{x}}$, 
let $\rho_* g$ be the analytic continuation of $g$ along $\rho$. 
Since $\rho_* g$ is also a solution to $E_C (a,b,c)$,  
the map $\rho_* :Sol_{\dot{x}} \to Sol_{\dot{x}};\ g\mapsto \rho_* g$ is 
a $\C$-linear automorphism which satisfies 
$(\rho \cdot \rho')_* =\rho'_* \circ \rho_*$ for $\rho,\rho' \in \pi_1 (X,\dot{x})$. 
Here, the composition $\rho \cdot \rho'$ of loops $\rho$ and $\rho'$ is 
defined as the loop going first along $\rho$, 
and then along $\rho'$. 
We thus obtain a representation 
$$
\CM' :\pi_1 (X,\dot{x}) \to GL(Sol_{\dot{x}})
$$
of $\pi_1 (X,\dot{x})$, where $GL(V)$ is 
the general linear group on a $\C$-vector space $V$. 
Since we can identify $Sol_{\dot{x}}$ with $H_m (T_{\dot{x}} ,u_{\dot{x}})$ 
by Fact \ref{iso-homology-solution}, 
the representation $\CM'$ is equivalent to 
$$
\CM :\pi_1 (X,\dot{x}) \to GL(H_m (T_{\dot{x}} ,u_{\dot{x}})) .
$$
Note that for $\rho \in \pi_1 (X,\dot{x})$, the map 
$\CM (\rho) :H_m (T_{\dot{x}} ,u_{\dot{x}}) \to H_m (T_{\dot{x}} ,u_{\dot{x}})$ 
coincides with the canonical isomorphism 
$\rho_* :H_m (T_{\dot{x}} ,u_{\dot{x}}) \to H_m (T_{\dot{x}} ,u_{\dot{x}})$ 
in the local system $\CH$. 
The representation 
$\CM$ (and $\CM'$) is called the monodromy representation, 
which is the main object in this paper. 

For $1\leq k \leq m$, let $\rho_k$ be the loop in $X$ defined by
$$
\rho_k :[0,1] \ni \theta \mapsto 
\left(  \frac{1}{2m^2} ,\ldots ,
  \frac{e^{2\pi \sqrt{-1}\theta}}{2m^2},
  \ldots ,\frac{1}{2m^2} \right) \in X,
$$
where $\frac{e^{2\pi \sqrt{-1}\theta}}{2m^2}$ is the $k$-th 
entry of $\rho_k (\theta)$. 
We take a positive real number $\vep_0$ so that 
$\vep_0 <\min \left\{ \frac{1}{2m^2}, \frac{1}{(m-2)^2}-\frac{1}{m^2} \right\}$, 
and we define the loop $\rho_0$ in $X$ as $\rho_0 :=\tau_0 \rho'_0 \overline{\tau_0}$, 
where 
\begin{align*}
  \tau_0 &:[0,1] \ni \theta \mapsto 
  \left( (1-\theta) \cdot \frac{1}{2m^2}+\theta \cdot \left( \frac{1}{m^2}-\vep_0 \right) \right) 
  (1,\ldots ,1) \in X, \\
  \rho'_0 &:[0,1] \ni \theta \mapsto 
  \left( \frac{1}{m^2} -\vep_0 e^{2\pi \sqrt{-1}\theta}  \right)
  (1,\ldots ,1) \in X,
\end{align*}
and $\overline{\tau_0}$ is the reverse path of $\tau_0$. 
\begin{Rem}
  The loop $\rho_k \ (1\leq k \leq m)$ turns the hyperplane $(x_k=0)$, 
  and $\rho_0$ turns the hypersurface $(R(x)=0)$ around the point 
  $\left( \frac{1}{m^2},\ldots, \frac{1}{m^2} \right)$, positively. 
  Note that $\left( \frac{1}{m^2},\ldots ,\frac{1}{m^2} \right)$ is the nearest to the origin 
  in $(R(x)=0)\cap (x_1=x_2=\cdots =x_m)=\left\{ \frac{1}{m^2}(1,\ldots ,1),
  \frac{1}{(m-2)^2}(1,\ldots ,1),\ldots  \right\}$. 
\end{Rem}

\begin{Th}\label{pi1}
  The loops $\rho_0 ,\rho_1 ,\ldots ,\rho_m$ generate 
  the fundamental group $\pi_1 (X,\dot{x})$. 
  Moreover, if $m\geq 2$, then they satisfy the following relations:
  \begin{align*}
    \rho_i \rho_j =\rho_j \rho_i \quad (1\leq i,j \leq m) ,\quad
    (\rho_0 \rho_k)^2 =(\rho_k \rho_0)^2 \quad (1\leq k \leq m).
  \end{align*}
\end{Th}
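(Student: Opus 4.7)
The plan has two independent parts: first, showing that $\rho_0, \rho_1, \ldots, \rho_m$ generate $\pi_1(X, \dot{x})$, via a Zariski-Lefschetz type reduction; and second, verifying the stated relations by analysing the local topology of $S$ at its codimension-two strata.

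For the generation statement, I would choose a generic affine $2$-plane $L$ through $\dot{x}$ and apply the iterated Hamm-L\^e theorem for smooth affine varieties: successive generic hyperplane sections give a surjection (in fact an isomorphism for $m \geq 3$) $\pi_1(L \cap X, \dot{x}) \twoheadrightarrow \pi_1(X, \dot{x})$. The trace $L \cap S$ has $m+1$ irreducible components coming from the coordinate hyperplanes $(x_k = 0)$ and from the irreducible hypersurface $(R = 0)$, and by the standard Zariski-Van Kampen presentation of a plane-curve complement, $\pi_1(L \cap X, \dot{x})$ is generated by meridians around each of these components. One then shows that, after appropriate base-path transport, these meridians can be identified with the loops $\rho_0, \rho_1, \ldots, \rho_m$ defined in the paper.

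For the relations, consider the codimension-two strata of $S$. For $1 \leq i < j \leq m$, the hyperplanes $(x_i = 0)$ and $(x_j = 0)$ meet transversely, so a transverse $2$-disc has fundamental group $\Z^2$, yielding $\rho_i \rho_j = \rho_j \rho_i$. (Equivalently, the real torus $\{|x_1| = \cdots = |x_m| = 1/(2m^2)\}$ lies entirely in $X$, since $|\sum_k \vep_k \sqrt{x_k}| \leq m/\sqrt{2} < 1$ on it, and on this torus the $\rho_k$ are coordinate circles which commute in $\pi_1$.) For $1 \leq k \leq m$, a direct computation gives
\begin{align*}
  R(x) \big|_{x_k = 0} = R_k(x')^2, \qquad R_k(x') := \prod_{(\vep_j)_{j \neq k} = \pm 1} \Bigl( 1 + \sum_{j \neq k} \vep_j \sqrt{x_j} \Bigr),
\end{align*}
where $x' = (x_j)_{j \neq k}$, since each factor of $R$ not involving $\sqrt{x_k}$ appears twice in the product over all $(\vep_1, \ldots, \vep_m)$. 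Hence $(R = 0)$ is tangent of contact order $2$ to $(x_k = 0)$ along the generically smooth locus $(x_k = 0) \cap (R_k = 0)$. A transverse $2$-disc at a generic point of this stratum realises the classical local picture of a line and a smooth parabola tangent with contact of order $2$ in $\C^2$, whose complement has fundamental group $\langle a, b \mid (ab)^2 = (ba)^2 \rangle$ by a standard Zariski-Van Kampen calculation. This relation transfers to $(\rho_0 \rho_k)^2 = (\rho_k \rho_0)^2$ in $\pi_1(X, \dot{x})$.

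The main obstacle I expect lies in the generation step: the meridians produced abstractly by Zariski-Van Kampen are only defined up to conjugation and depend on many choices of base paths and projection, so identifying them with the specific loops $\rho_0, \ldots, \rho_m$ in the theorem statement demands careful geometric bookkeeping, using the explicit positions of $\dot{x} = (1/(2m^2), \ldots, 1/(2m^2))$ and of the reference point $(1/m^2, \ldots, 1/m^2)$ on $(R = 0)$. Once this matching is in hand, the two families of relations follow from the local models sketched above.
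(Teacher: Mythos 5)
Your plan for the generation statement is in the same spirit as the paper's: both reduce to a one-dimensional section of $X$ via a Zariski--Lefschetz hyperplane argument (the paper goes directly to a generic line in $\P^m$ and applies Proposition (4.3.1) of Dimca, rather than passing through an affine $2$-plane as you do). The difficulty you flag at the end --- identifying the abstract Zariski--Van Kampen meridians with the concrete loops $\rho_0,\ldots,\rho_m$ --- really is where the bulk of the paper's work lies, and your sketch leaves it entirely open. On the line section $L_\vep$ there are $m + 2^{m-1}$ meridians (one per coordinate hyperplane and one per intersection point with $(R=0)$), and the paper's Lemma A.2(i), proved by an explicit braid-monodromy computation (rotating $x_k$ around a circle and tracking how the $2^{m-1}$ points $t_{a_1\cdots a_{m-1}}^{(\theta)}$ braid), shows that all the meridians of $(R=0)$ become conjugates of $\rho_0$ by words in $\rho_1,\ldots,\rho_{m-1}$. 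Appealing to the slogan ``meridians of irreducible components generate'' hides exactly this computation; you would need to either cite a theorem of that form and then still match its meridian with $\rho_0$, or redo the paper's explicit analysis.

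For the relations, your argument is a genuinely different route and is, I think, cleaner conceptually than the paper's. The commutation relations via the compact torus $\{|x_k|=1/(2m^2)\}$ matches the paper's one-line remark about $(\C^\times)^m$. The observation that $R|_{x_k=0}=R_k(x')^2$ (because the $\vep_k=\pm1$ pair of factors each degenerate to the same factor when $\sqrt{x_k}=0$) is correct, and the resulting local picture in a transverse $2$-disc at a generic point of $(x_k=0)\cap(R_k=0)$ --- two factors $1+\sum_{l\neq k}\vep_l^0\sqrt{x_l}\pm\sqrt{x_k}$ pairing to $u^2-x_k$ --- is indeed a line and a parabola tangent to order $2$, with local braid monodromy $\sigma_1^4$ and hence local group $\langle a,b\mid (ab)^2\text{ central}\rangle$, which is equivalent to $(ab)^2=(ba)^2$. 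By contrast, the paper derives $(\rho_0\rho_{m-1})^2=(\rho_{m-1}\rho_0)^2$ from Lemma A.2(ii), tracking the meridian $\ell_{1\cdots1}$ as it passes through the lower half-plane; your tangency picture explains geometrically \emph{why} that braid relation and no other arises. The one point you should not gloss over is the base-path bookkeeping when pushing the local relation into $\pi_1(X,\dot{x})$: you must choose a single path $g$ from $\dot{x}$ into the local disc so that the two local meridians are sent to $g\rho_0 g^{-1}$ and $g\rho_k g^{-1}$ with the \emph{same} conjugator, and then check that these can in fact be taken to be $\rho_0$ and $\rho_k$ themselves (or some conjugate pair by a common element). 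Without that, the local relation only yields a conjugate of the stated relation. This is routine but should be spelled out, and the paper's very explicit parametrization of $\rho_0$, $\rho_k$ and the line $L_\vep$ is precisely what makes it checkable there.
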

\begin{Rem}
  It is shown in \cite{Kaneko} that 
  if $m=2$, then  $\pi_1 (X,\dot{x})$ is the group 
  generated by $\rho_0 ,\rho_1 ,\rho_2$ with 
  the relations in Theorem \ref{pi1}. 
\end{Rem}
We show this theorem in Appendix \ref{section-pi1}.
By this theorem, for the study of the monodromy representation $\CM$, 
it is sufficient to investigate $m+1$ linear maps 
$$
\CM_i :=\CM (\rho_i) \quad (0 \leq i \leq m). 
$$
\begin{Prop}\label{k=1,,m}
  For $1\leq k \leq m$, the eigenvalues of $\CM_k$ are $\ga_k^{-1}$ and $1$. 
  The eigenspace of $\CM_k$ of eigenvalue $\ga_k^{-1}$ is 
  spanned by the twisted cycles 
  $$
  \De_I ,\quad k \in I \subset \{ 1,\ldots ,m \} .
  $$
  That of eigenvalue $1$ is spanned by 
  $$  
  \De_I ,\quad k \not\in I \subset \{ 1,\ldots ,m \} .
  $$
  In particular, both eigenspaces are of dimension $2^{m-1}$. 
\end{Prop}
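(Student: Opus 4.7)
The plan is to transfer the computation from the twisted cycles $\De_I$ to the explicit local solutions $f_I$ via the isomorphism $\Phi_{\dot x}$ of Fact \ref{iso-homology-solution}, and to read off the monodromy eigenvalues from the product form (\ref{series-sol}) of $f_I$. By Fact \ref{solution-cycle}, one has $\Phi_{\dot x}(\De_I) = C_I \cdot f_I$ with nonzero constants $C_I$ (for generic parameters the Gamma factors are finite and nonzero), so $\Phi_{\dot x}$ carries the basis $\{\De_I\}_I$ of $H_m(T_{\dot x}, u_{\dot x})$ to the basis $\{C_I f_I\}_I$ of $Sol_{\dot x}$. In particular $\CM_k \De_I = \la \De_I$ if and only if $\rho_{k*} f_I = \la f_I$, so it suffices to compute the monodromy of each $f_I$ along $\rho_k$.

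Using the explicit product form
\begin{align*}
f_I = \prod_{i \in I} x_i^{1-c_i} \cdot F_C\!\left(a + |I| - \sum_{i \in I} c_i,\ b + |I| - \sum_{i \in I} c_i,\ c^I;\ x\right),
\end{align*}
I would examine the two factors separately along $\rho_k$. The loop $\rho_k$ keeps $|x_l|$ fixed at $\frac{1}{2m^2}$ for every $l$, so at every point of $\rho_k$ one has $\sum_l \sqrt{|x_l|} = \frac{1}{\sqrt{2}} < 1$; hence $\rho_k$ stays inside $D_C$, where the $F_C$-series converges and defines a single-valued holomorphic function. Thus the second factor is unchanged after analytic continuation along $\rho_k$. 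The first factor $\prod_{i \in I} x_i^{1-c_i}$ is affected only through the variable $x_k$: when $k \notin I$ there is no $x_k$-term, while when $k \in I$ the factor $x_k^{1-c_k}$ picks up $e^{2\pi \sqrt{-1}(1-c_k)} = \ga_k^{-1}$. Transferring back through $\Phi_{\dot x}$, I would conclude $\CM_k \De_I = \ga_k^{-1} \De_I$ for $k \in I$ and $\CM_k \De_I = \De_I$ for $k \notin I$. Since $\{\De_I\}_I$ is a basis of $H_m(T_{\dot x}, u_{\dot x})$ and exactly $2^{m-1}$ subsets of $\{1,\ldots,m\}$ contain $k$ (and the same number do not), this diagonalizes $\CM_k$ and identifies the two eigenspaces, each of dimension $2^{m-1}$.

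The only genuinely non-routine point is the single-valuedness of the $F_C$-series along $\rho_k$, which reduces to checking that $\rho_k \subset D_C$ as above; everything else follows transparently from (\ref{series-sol}) and Fact \ref{solution-cycle}. A minor bookkeeping point is that the $\De_I$ in Section \ref{section-solution-cycle} were constructed at sufficiently small positive real $x$ rather than at $\dot x$, but they extend canonically to $H_m(T_{\dot x}, u_{\dot x})$ via parallel transport in the local system $\CH$, and this extension is compatible with the analytic continuation of $f_I$ used here.
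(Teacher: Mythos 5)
Your proposal is correct and follows essentially the same route as the paper's proof: identify $\De_I$ with (a nonzero multiple of) the explicit local solution $f_I$ via Fact \ref{solution-cycle}, note that the $F_C$-series factor is single-valued along $\rho_k$ (the paper simply says ``around the origin,'' while you verify $\rho_k\subset D_C$ explicitly), and read off the eigenvalue $\gamma_k^{-1}$ or $1$ from the monomial prefactor $\prod_{i\in I}x_i^{1-c_i}$. The additional bookkeeping you add (the nonvanishing of the Gamma-factor constants, and the parallel transport identifying the $\De_I$ constructed at small positive real $x$ with elements of $H_m(T_{\dot x},u_{\dot x})$) is sound and only makes explicit what the paper leaves implicit.
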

\begin{proof}
  By Fact \ref{solution-cycle}, the twisted cycle 
  $\De_I$ corresponds to the solution 
  \begin{align*}
    f_I
    =\prod _{i\in I} x_i ^{1-c_i} \cdot 
    F_C \left( a+|I|-\sum _{i\in I} c_i ,b+|I|-\sum _{i\in I} c_i ,c^I ;
      x \right)  
  \end{align*}
  to $E_C (a,b,c)$. 
  Since the series $F_C$ defines a single-valued function around the origin, we have 
  \begin{align*}
    \CM '(\rho_k)(f_I)=\left\{  
      \begin{array}{ll}
        \ga_k^{-1} f_I & (k\in I ) , \\
        f_I & (k\not\in I ).
      \end{array}
      \right.
  \end{align*}
  Therefore, we obtain this proposition. 
\end{proof}
\begin{Cor}\label{CM_k}
  For $1\leq k \leq m$, 
  the linear map $\CM_k :H_m (T_{\dot{x}}, u_{\dot{x}}) \to 
  H_m (T_{\dot{x}}, u_{\dot{x}})$ is expressed as 
  $$
  \CM_k : \de \mapsto 
  \de -(1-\ga_k^{-1})
  \sum_{I\ni k}
  \frac{I_h (\de ,\De_I^{\vee})}
  {I_h (\De_I ,\De_I^{\vee})} \De_I .
  $$
  Further, the representation matrix $M_k$ of $\CM_k$ with respect to 
  the basis $\{ \De_I \}_I$ is the diagonal matrix 
  whose $(I,I)$-entry is 
  $$
  \left\{ 
    \begin{array}{ll}
      \ga_k^{-1} & (I\ni k), \\
      1 & (I\not\ni k). 
    \end{array}
  \right. 
  $$ 
\end{Cor}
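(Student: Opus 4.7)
The plan is to combine Proposition \ref{k=1,,m} with the diagonality of the intersection matrix in Fact \ref{solution-cycle} to obtain both statements at once. The matrix statement is essentially immediate: Proposition \ref{k=1,,m} tells us that $\CM_k$ acts diagonally on the basis $\{\De_I\}_I$, multiplying $\De_I$ by $\ga_k^{-1}$ when $k\in I$ and by $1$ when $k\notin I$. So there is really nothing to prove for the matrix form beyond reading off the eigenvalues.

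For the intrinsic expression, I would first use the diagonal shape of $H=(I_h(\De_I,\De_{I'}^{\vee}))_{I,I'}$ to write down an explicit expansion of an arbitrary twisted cycle $\de\in H_m(T_{\dot{x}},u_{\dot{x}})$ in the basis $\{\De_I\}_I$. Since $H$ is diagonal with nonzero entries (the factors in Fact \ref{solution-cycle} are nonzero under our genericity assumption), the family $\{\De_I^{\vee}/I_h(\De_I,\De_I^{\vee})\}_I$ is the basis of $H_m(T_{\dot{x}},u_{\dot{x}}^{-1})$ dual to $\{\De_I\}_I$ with respect to $I_h$. Consequently, any $\de$ satisfies
\begin{align*}
  \de=\sum_{I\subset\{1,\ldots,m\}} \frac{I_h(\de,\De_I^{\vee})}{I_h(\De_I,\De_I^{\vee})}\, \De_I.
\end{align*}

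Next, I would apply $\CM_k$ to this expansion. By Proposition \ref{k=1,,m}, $\CM_k(\De_I)=\ga_k^{-1}\De_I$ if $k\in I$ and $\CM_k(\De_I)=\De_I$ otherwise. Splitting the sum according to whether $k\in I$ and regrouping the identity part back into $\de$, we get
\begin{align*}
  \CM_k(\de)=\de-(1-\ga_k^{-1})\sum_{I\ni k} \frac{I_h(\de,\De_I^{\vee})}{I_h(\De_I,\De_I^{\vee})}\, \De_I,
\end{align*}
which is the claimed formula.

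There is no real obstacle here; the argument is formal once Proposition \ref{k=1,,m} and the diagonality of $H$ are in hand. The only point that deserves a word of care is the nonvanishing of the diagonal entries $I_h(\De_I,\De_I^{\vee})$, so that dividing by them is legitimate; this follows from the explicit product formula in Fact \ref{solution-cycle} together with the genericity assumption on the parameters (so that none of $\ga_k-1$, $\al-\prod_k\ga_k$, $\be-1$, $\al-\prod_{i\in I}\ga_i$, $\be-\prod_{i\in I}\ga_i$ vanish).
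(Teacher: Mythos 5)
Your argument is correct and matches the paper's proof in substance: both hinge on Proposition~\ref{k=1,,m} for the diagonal action and on the diagonality of $H$ from Fact~\ref{solution-cycle} to identify $\{\De_I^{\vee}/I_h(\De_I,\De_I^{\vee})\}_I$ as the dual basis (the paper packages this as ``reduce to $\de=\De_I$ and read off''). Your version just writes out the expansion step and the nonvanishing of the diagonal entries a bit more explicitly; no difference of approach.
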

\begin{proof}
  We prove the first claim. 
  By Proposition \ref{k=1,,m}, $H_m (T_{\dot{x}}, u_{\dot{x}})$ is 
  decomposed into the direct sum of the eigenspaces: 
  $H_m (T_{\dot{x}}, u_{\dot{x}}) 
  =(\bigoplus_{I\ni k} \C \De_I) \oplus (\bigoplus_{I\not\ni k} \C \De_I)$. 
  Then it is sufficient to show that the claim holds for $\de =\De_I$. 
  This is clear by Fact \ref{solution-cycle} and Proposition \ref{k=1,,m}. 
  The second claim is obvious. 
\end{proof}

For each subset $I\subset \{ 1,\ldots ,m \}$, 
we define a chamber $D_I$ which gives an element in 
$H_m (T_{\dot{x}}, u_{\dot{x}})$. 
For $I=\{ 1,\ldots ,m \}$, we put 
$$
D_{1\cdots m}:=\{ (t_1 ,\ldots ,t_m) \in \R^m \mid 
t_k>0 \ (1\leq k \leq m),\ 
v(t)>0,\ w(t,\dot{x})>0 \} . 
$$
For $I=\emptyset$, we put 
$$
D_{\emptyset}=D:=\{ (t_1 ,\ldots ,t_m) \in \R^m \mid 
t_k <0 \ (1 \leq k \leq m) \} .
$$
For $I \neq \emptyset ,\{ 1,\ldots ,m \}$, we put
$$
D_I:=\left\{ (t_1 ,\ldots ,t_m) \in \R^m \ \left| 
    \begin{array}{l}
      t_i >0 \ (i\in I),\ t_j <0 \ (j \not\in I),\\ 
      v(t)>0,\ (-1)^{m-|I|+1}w(t,\dot{x})>0 
    \end{array}
  \right. \right\} .
$$
The arguments of the factors of $u_{\dot{x}}(t)$ are 
defined as follows. 
$$
\begin{array}{|c|c|c|c|c|}
  \hline 
  & t_i (i\in I) & t_j (j \not\in I) & v(t) & w(t,\dot{x}) 
  \\ \hline 
  D_{1\cdots m} & 0 & - & 0 & 0 
  \\ \hline
  D & - & -\pi & 0 & -m\pi 
  \\ \hline 
  {\rm otherwise} & 0 & -\pi & 0 & -(m-|I|+1)\pi 
  \\ \hline
\end{array}
$$
By the identification of $H^{lf}_m (T_x ,u_x)$ and $H_m (T_x ,u_x)$ 
(see below Fact \ref{fact-vanish}), 
we can consider that the (open) chamber $D_I$ defines an element in $H_m (T_x ,u_x)$. 
Note that if $m=2$, then $D,\ D_1 ,\ D_2$, and $D_{12}$ are equal to 
$\De_6 ,\ \De_7 ,\ \De_8$, and $\De_5$ in \cite{GM}, respectively. 
We state our main theorem. 
\begin{Th}\label{main}
  The eigenvalues of $\CM_0$ are 
  $(-1)^{m-1} \prod_k \ga_k \cdot \al^{-1} \be^{-1}$ and $1$. 
  The eigenspace $W_0$ of $\CM_0$ of eigenvalue 
  $(-1)^{m-1} \prod_k \ga_k \cdot \al^{-1} \be^{-1}$ is 
  spanned by $D_{1\cdots m}$, and hence is one-dimensional. 
  The eigenspace $W_1$ of $\CM_0$ of eigenvalue $1$ is spanned by 
  $$
  D_I ,\quad I \subsetneq \{ 1,\ldots ,m \} ,
  $$
  and expressed as 
  $$
  W_1 =\{ \de \in H_m (T_{\dot{x}}, u_{\dot{x}}) \mid 
  I_h (\de ,D_{1\cdots m}^{\vee})=0 \} .
  $$
  In particular, this space is $(2^m -1)$-dimensional. 
\end{Th}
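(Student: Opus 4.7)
The structure of the statement---a rank-one change in the monodromy, with the distinguished direction $D_{1\cdots m}$ singled out by the intersection form---is the hallmark of a Picard--Lefschetz formula. My plan is to identify $D_{1\cdots m}$ as a vanishing cycle at the singular point $x^* := \big(\frac{1}{m^2},\ldots,\frac{1}{m^2}\big)$ encircled by $\rho_0$, compute the associated eigenvalue, and then combine this with intersection calculations to pin down $W_0$ and $W_1$.

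First I would analyze the geometry of $D_{1\cdots m}$ as $x \to x^*$. At $t^* := \big(\frac{1}{m},\ldots,\frac{1}{m}\big)$ one has $v(t^*) = 0$ and $1-\sum_k x^*_k/t^*_k = 0$ simultaneously, while the gradients of $v$ and of $1 - \sum_k x_k/t_k$ at $(t^*,x^*)$ are both proportional to $(1,\ldots,1)$. So the two real hypersurfaces forming part of $\pa D_{1\cdots m}$ become tangent, and the bounded chamber contracts to $t^*$ as $x \to x^*$ along a suitable real direction. After a local change of coordinates the tangential intersection becomes a standard quadratic degeneration parametrized up to units by $R(x)$, exhibiting $D_{1\cdots m}$ as a vanishing cycle for the family along $\rho_0$. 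A local asymptotic expansion of $\int_{D_{1\cdots m}} u_x \vph$ then yields leading behavior of the form $C \cdot R(x)^{\mu}$, with $\mu$ determined by the exponents $A = \sum c_k - a - m + 1$ of $v$ and $B = -b$ of $w$ (the factor $\prod t_k^{1-c_k}$ being smooth and nonvanishing near $t^*$) and by an orientation sign depending on $m$. Careful tracking of the branches prescribed in the table above the theorem should produce $\la_0 := (-1)^{m-1} \prod_k \ga_k \cdot \al^{-1}\be^{-1}$.

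With the vanishing-cycle description, the Picard--Lefschetz formula in the twisted setting reads
\[
\CM_0(\de) = \de + (\la_0 - 1) \frac{I_h(\de,D_{1\cdots m}^\vee)}{I_h(D_{1\cdots m},D_{1\cdots m}^\vee)} D_{1\cdots m}
\]
for every $\de \in H_m(T_{\dot x},u_{\dot x})$. This immediately yields the eigenspace description: $W_0 = \C D_{1\cdots m}$ is one-dimensional, and $W_1 = \{\de \mid I_h(\de,D_{1\cdots m}^\vee) = 0\}$ has codimension one. To finish the description of $W_1$ via the $D_I$, I would verify that $I_h(D_I, D_{1\cdots m}^\vee) = 0$ for every $I \subsetneq \{1,\ldots,m\}$---this should follow from the disjointness of the open chambers $D_I$ and $D_{1\cdots m}$ together with the standard intersection calculus for chambers of a real arrangement---and that the $2^m - 1$ cycles $\{D_I\}_{I \subsetneq}$ are linearly independent, e.g.\ by invertibility of the transition matrix from $\{D_I\}_I$ to the basis $\{\De_I\}_I$ of Fact~\ref{solution-cycle} or by direct intersection against $\{\De_{I'}^\vee\}$.

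The principal difficulty is the eigenvalue computation in the second step: extracting $\la_0$, and in particular the sign $(-1)^{m-1}$, requires a careful Morse-type local model near the tangency $\{v = 0\} \cap \{1-\sum x_k/t_k = 0\}$ at $(t^*, x^*)$ and systematic bookkeeping of the multivalued factors $v^A$ and $w^{-b}$ across the degeneration. The remaining ingredients---Picard--Lefschetz reflection, vanishing of the cross intersection numbers between distinct chambers, and the basis property of $\{D_I\}_I$---reduce to standard twisted-intersection calculations in the framework of Section~\ref{section-THG} and Section~\ref{section-solution-cycle}.
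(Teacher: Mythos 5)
Your proposal is correct, and the mathematical content is the same as the paper's: identify $D_{1\cdots m}$ as the cycle that collapses at $x^*=(\frac{1}{m^2},\ldots,\frac{1}{m^2})$, reduce the tangency of $(v=0)$ and $(1-\sum x_k/t_k=0)$ at $t^*=(\frac{1}{m},\ldots,\frac{1}{m})$ to a standard quadric via the Morse lemma, read off the local eigenvalue $\la_0=(-1)^{m-1}\prod\ga_k\cdot\al^{-1}\be^{-1}$ from the exponents $A=\sum c_k-a-m+1$ of $v$ and $B=-b$ of $w$ (your gradient computation showing both gradients are proportional to $(1,\ldots,1)$ is correct and matches the paper's Hessian analysis), and then combine this with $I_h(D_I,D_{1\cdots m}^\vee)=0$ and linear independence of $\{D_I\}_I$. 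The one structural difference is the order of logic: you take the rank-one Picard--Lefschetz formula
$\CM_0(\de)=\de+(\la_0-1)\,I_h(\de,D_{1\cdots m}^\vee)\,I_h(D_{1\cdots m},D_{1\cdots m}^\vee)^{-1}\,D_{1\cdots m}$
as an input (a twisted Picard--Lefschetz theorem), from which $W_0$ and $W_1$ drop out at once; the paper instead proves the theorem more directly---showing $\CM_0(D_I)=D_I$ for $I\subsetneq\{1,\ldots,m\}$ by the geometric observation that each such chamber survives the degeneration with unchanged branch of $u_x$, computing $\CM_0(D_{1\cdots m})=\la_0 D_{1\cdots m}$ from the local quadric model, and establishing linear independence via $\det\La_0\neq 0$---and only afterwards derives the Picard--Lefschetz-type formula as Corollary \ref{CM_0-1}. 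Your version is tidier if one is willing to cite a twisted Picard--Lefschetz theorem outright, but since proving that theorem in this precise setting amounts to the same local analysis, the paper's self-contained route avoids the dependency; you should either give a precise citation for the twisted Picard--Lefschetz formula with hypotheses matched, or observe, as the paper does, that the vanishing of the chamber and the triviality of the variation on the surviving $D_I$ already deliver the eigenspace decomposition without invoking the general theorem.
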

The proof of this theorem is given in Section \ref{section-proof}. 
\begin{Cor}\label{CM_0-1}
  The linear map $\CM_0 :H_m (T_{\dot{x}}, u_{\dot{x}}) \to 
  H_m (T_{\dot{x}}, u_{\dot{x}})$ is expressed as 
  $$
  \CM_0 : \de \mapsto 
  \de -\left( 1+(-1)^m \prod_k \ga_k \cdot \al^{-1} \be^{-1} \right)
  \frac{I_h (\de ,D_{1\cdots m}^{\vee})}
  {I_h (D_{1\cdots m} ,D_{1\cdots m}^{\vee})} D_{1\cdots m}.
  $$
\end{Cor}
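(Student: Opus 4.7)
The plan is to deduce the corollary directly from Theorem \ref{main} by using the eigenspace decomposition to project onto $W_0$ via the intersection form. Write $\lambda_0 := (-1)^{m-1} \prod_k \ga_k \cdot \al^{-1} \be^{-1}$, so the coefficient appearing in the corollary is $1-\lambda_0 = 1+(-1)^m \prod_k \ga_k \cdot \al^{-1} \be^{-1}$. Since $\dim W_0 + \dim W_1 = 1 + (2^m-1) = 2^m = \dim H_m(T_{\dot{x}},u_{\dot{x}})$, and since any element $\de' \in W_1$ satisfies $I_h(\de',D_{1\cdots m}^{\vee})=0$ while $D_{1\cdots m} \in W_0$ has $I_h(D_{1\cdots m},D_{1\cdots m}^{\vee}) \neq 0$ (see below), the two eigenspaces meet trivially and we obtain the direct sum decomposition $H_m(T_{\dot{x}},u_{\dot{x}}) = W_0 \oplus W_1$.

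Given this, I would write an arbitrary twisted cycle uniquely as $\de = c\, D_{1\cdots m} + \de'$ with $c\in\C$ and $\de' \in W_1$. Applying $\CM_0$ and using the eigenvalues supplied by Theorem \ref{main} gives
\begin{align*}
\CM_0(\de) = \lambda_0\, c\, D_{1\cdots m} + \de' = \de - (1-\lambda_0)\, c\, D_{1\cdots m}.
\end{align*}
It remains to identify the scalar $c$. Pairing $\de$ with $D_{1\cdots m}^{\vee}$ under $I_h$ and using $I_h(\de',D_{1\cdots m}^{\vee})=0$ (the characterization of $W_1$ in Theorem \ref{main}) yields
\begin{align*}
I_h(\de,D_{1\cdots m}^{\vee}) = c\, I_h(D_{1\cdots m},D_{1\cdots m}^{\vee}),
\end{align*}
so $c = I_h(\de,D_{1\cdots m}^{\vee})/I_h(D_{1\cdots m},D_{1\cdots m}^{\vee})$. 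Substituting back gives the asserted formula.

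The one substantive check is that $I_h(D_{1\cdots m},D_{1\cdots m}^{\vee})\neq 0$, since otherwise neither the decomposition argument nor the final expression makes sense. Under the generic parameter assumption (Remark \ref{generic}) this self-intersection number is nonzero: $D_{1\cdots m}$ is the twisted cycle supported on a single bounded chamber bounded by the hyperplanes $t_k=0$, $v(t)=0$, and the hypersurface $w(t,\dot{x})=0$, so its self-intersection can be computed in standard fashion (e.g.\ by the product formula of \cite{AK}) as a product of factors of the form $(1-\text{monomial in } \al,\be,\ga_k)^{-1}$ coming from the bounding divisors, each of which is nonzero under our genericity assumptions. I expect this nonvanishing to be the only real obstacle; everything else in the corollary is a formal consequence of the eigenspace structure established in Theorem \ref{main}.
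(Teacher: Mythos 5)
Your proof is correct and follows essentially the same route as the paper's: decompose $H_m(T_{\dot{x}},u_{\dot{x}})=W_0\oplus W_1$ via Theorem \ref{main}, apply $\CM_0$ on each summand using the eigenvalues, and identify the coefficient of $D_{1\cdots m}$ by pairing against $D_{1\cdots m}^{\vee}$ and using $I_h(W_1,D_{1\cdots m}^{\vee})=0$. (One small remark: the nonvanishing of $I_h(D_{1\cdots m},D_{1\cdots m}^{\vee})$ already follows formally from Theorem \ref{main}, since $W_1$ is the hyperplane annihilated by $D_{1\cdots m}^{\vee}$ and $W_0\cap W_1=0$ forces $D_{1\cdots m}\notin W_1$; no product-formula computation is required, though the paper does give the explicit value in Proposition \ref{vanishing-cycle}.)
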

\begin{proof}
  By Theorem \ref{main}, we have 
  $H_m (T_{\dot{x}}, u_{\dot{x}}) =W_0 \oplus W_1 =\C D_{1\cdots m} \oplus W_1$. 
  Then it is sufficient to show that the claim holds for $\de =D_{1\cdots m}$ and 
  $\de \in W_1$. 
  This is clear by Theorem \ref{main}. 
\end{proof}

\begin{Prop}\label{vanishing-cycle}
  We have 
  \begin{align}
    I_h (D_{1\cdots m},\De_I^{\vee})
    =I_h (\De_I,\De_I^{\vee}) 
    =I_h (\De_I,D_{1\cdots m}^{\vee}).
    \label{D-Delta} 
  \end{align}
  Thus we obtain 
  \begin{align}
    & D_{1\cdots m} =\sum_{I \subset \{ 1,\ldots ,m \}} \De_I ,
    \label{D-sum} \\
    &I_h (D_{1\cdots m} ,D_{1\cdots m}^{\vee} )
    =\frac{\al \be +(-1)^m \prod_k \ga_k}{(\be-1)(\al-\prod_k \ga_k )}.
    \label{D-self-int}
  \end{align}
\end{Prop}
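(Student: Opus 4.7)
The plan is to prove the key identity (\ref{D-sum}) first, from which both (\ref{D-Delta}) and (\ref{D-self-int}) will follow by routine computations exploiting the diagonality of the intersection matrix $H$ from Fact \ref{solution-cycle}.

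To obtain (\ref{D-sum}), I would use the bijections $\iota_I \colon M_I \to T_{\dot{x}}$ and the observation that each $\iota_I(\si_I)$ lies in $D_{1\cdots m}$: the constraints $s_k > 0$ force $t_k > 0$, while $v_I > 0$ and $w_I > 0$ translate under $\iota_I$ to $v(t) > 0$ and $w(t,\dot{x}) > 0$ (since $w(t,\dot{x}) = \prod_{i\in I}(x_i/s_i) \cdot w_I$ and similarly for $v$). The expected geometric picture is that $D_{1\cdots m}$ decomposes into $2^m$ sub-regions indexed by $I$, according to whether each $t_k$ is ``small'' (of order $x_k$, in which case $s_i = x_i/t_i$ parametrizes with $i \in I$) or ``of order $1$'' (in which case $s_j = t_j$ with $j \notin I$). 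Summing the regularized twisted cycles $\De_I = (-1)^{|I|}(\iota_I)_*(\tilde{\De}_I)$, with the signs compensating the orientation change of $\iota_I$ and with branches consistent with the prescription of $u_{\dot{x}}$ on $D_{1\cdots m}$, should yield $D_{1\cdots m}$ in $H_m(T_{\dot{x}}, u_{\dot{x}})$. As a cross-check, since $\{\De_I\}_I$ is a basis and $H$ is diagonal, one could write $D_{1\cdots m} = \sum_I a_I \De_I$ and verify $a_I = I_h(D_{1\cdots m}, \De_I^\vee)/H_{I,I} = 1$ by a local intersection calculation near $\iota_I(\si_I)$.

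Once (\ref{D-sum}) is granted, the equalities in (\ref{D-Delta}) are immediate from the diagonality of $H$: $I_h(D_{1\cdots m}, \De_I^\vee) = \sum_J I_h(\De_J, \De_I^\vee) = I_h(\De_I, \De_I^\vee)$, and the other equality follows from the analogous decomposition $D_{1\cdots m}^\vee = \sum_I \De_I^\vee$ for the dual. For (\ref{D-self-int}), we have $I_h(D_{1\cdots m}, D_{1\cdots m}^\vee) = \sum_I H_{I,I}$. Setting $y_I := \prod_{i\in I}\ga_i$ and using $\prod_{j\notin I}\ga_j = \prod_k\ga_k / y_I$, the expansion $(\al-y_I)(\be-y_I) = \al\be - (\al+\be)y_I + y_I^2$ combined with the elementary identities $\sum_I(-1)^{|I|} y_I^{\pm 1} = \prod_k(1-\ga_k^{\pm 1})$ and $\sum_I(-1)^{|I|} = 0$ (valid for $m \geq 1$) collapses the sum to $(\al\be + (-1)^m \prod_k\ga_k)\prod_k(\ga_k-1)/\prod_k\ga_k$, giving (\ref{D-self-int}) after cancellation against the denominator $\prod_k(\ga_k-1)(\al-\prod_k\ga_k)(\be-1)$.

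The main obstacle is establishing (\ref{D-sum}): while the large-scale decomposition of $D_{1\cdots m}$ into the sub-regions $\iota_I(\si_I)$ is intuitive, verifying the identity in homology requires careful bookkeeping of the branch choices of $u_{\dot{x}}$ across the different $I$ (comparing with the table defining the branches on $D_I$) and of how the boundary correction terms around the faces of $\si_I$ (the $1/d_p$ factors in $\tilde{\De}_I$) assemble to reproduce the locally finite representative of $D_{1\cdots m}$.
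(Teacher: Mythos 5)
Your logical skeleton is sound and your algebra is correct: once $I_h(D_{1\cdots m},\De_I^\vee)=H_{I,I}$ is known, the diagonality of $H$ yields (\ref{D-sum}), and summing the diagonal entries with the identity $\sum_I(-1)^{|I|}\prod_{i\in I}\ga_i^{\pm 1}=\prod_k(1-\ga_k^{\pm 1})$ together with $\sum_I(-1)^{|I|}=0$ does give (\ref{D-self-int}). This part matches the paper's computation exactly. But you have not supplied the key step, and you essentially say so yourself: everything hinges on showing that the coefficient of $\De_I$ in $D_{1\cdots m}$ is $1$, i.e.\ $I_h(D_{1\cdots m},\De_I^\vee)=I_h(\De_I,\De_I^\vee)$, and neither of your two suggested routes is developed far enough to work.

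The "geometric decomposition of $D_{1\cdots m}$ into $2^m$ subregions" is more problematic than you indicate. The regularized cycle $\De_I$ is not localized to $\iota_I(\si_I)$: it carries tube corrections $\prod(1/d_p)\,(\cap l_p)\times\prod S_p$, with denominators depending on $I$ (the $d_p$'s change role between $i\in I$ and $j\in J$), so a naive partition of the chamber does not assemble into $\sum_I\De_I$ in homology. Likewise, your fallback "local intersection calculation near $\iota_I(\si_I)$" is not actually local: $\De_I^\vee$ has support extending past $\si_I$ through the tubes, while $D_{1\cdots m}$ is a full chamber, and the intersection pairing is not concentrated at a point without a further device. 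What the paper uses, and what your proposal is missing, is the imaginary-cycle trick: fix $s_0\in\si_I$ and take $(\iota_I)_*(\sqrt{-1}\R_I^m)^\vee$. Via the solution map $\Phi_x$ and the twisted period pairing this dual imaginary cycle is orthogonal to every $\De_{I'}$, $I'\ne I$, hence is a scalar multiple of $\De_I^\vee$; and it meets both $D_{1\cdots m}$ and $\iota_I(\si_I)$ transversally at the single point $\iota_I(s_0)$ with matching orientation and branch, which forces $I_h(D_{1\cdots m},\De_I^\vee)=I_h(\De_I,\De_I^\vee)$. Absent that (or an equivalent mechanism for turning the pairing into a genuinely pointwise computation), the proof of (\ref{D-Delta})/(\ref{D-sum}) has a real gap.
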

This proposition is also proved in Section \ref{section-proof}. 
By this proposition, we obtain the following corollary. 
\begin{Cor}\label{CM_0-2}
  The linear map $\CM_0$ is expressed as 
  $$
  \CM_0 : \de \mapsto 
  \de -\frac{(\be-1)(\al -\prod_k \ga_k )}{\al \be}
  I_h (\de ,D_{1\cdots m}^{\vee}) D_{1\cdots m}. 
  $$
  Let $M_0$ be
  the representation matrix of $\CM_0$ with respect to 
  the basis $\{ \De_I \} _I$. Then we have  
  $$
  M_0 =E_{2^m} -\frac{(\be-1)(\al -\prod_k \ga_k )}{\al \be} NH ,
  $$
  where $E_{2^m}$ is the unit matrix of size $2^m$, 
  $N$ is the $2^m \times 2^m$ matrix with all entries $1$, and 
  $H=\left( I_h (\De_I ,\De_{I'}^{\vee}) \right)_{I,I'}$ is the intersection matrix 
  given in Fact \ref{solution-cycle}.
\end{Cor}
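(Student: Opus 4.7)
The plan is to derive this corollary by assembling the formula of Corollary \ref{CM_0-1} with the explicit intersection numbers provided by Proposition \ref{vanishing-cycle}; the argument is essentially bookkeeping, so I do not anticipate a genuine obstacle.

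First I would substitute the value of $I_h(D_{1\cdots m}, D_{1\cdots m}^{\vee})$ given by equation (\ref{D-self-int}) into the expression of Corollary \ref{CM_0-1}. Rewriting the coefficient as
$$
\frac{1 + (-1)^m \prod_k \ga_k \cdot \al^{-1} \be^{-1}}{I_h(D_{1\cdots m}, D_{1\cdots m}^{\vee})}
= \frac{\bigl(\al\be + (-1)^m \prod_k \ga_k\bigr)/(\al\be)}{\bigl(\al\be + (-1)^m \prod_k \ga_k\bigr)/\bigl[(\be-1)(\al - \prod_k \ga_k)\bigr]},
$$
the common factor $\al\be + (-1)^m \prod_k \ga_k$ cancels, leaving precisely $(\be-1)(\al-\prod_k \ga_k)/(\al\be)$. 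This establishes the first displayed formula for $\CM_0$.

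For the matrix formula, I would evaluate the displayed expression on each basis element $\De_{I'}$. By identity (\ref{D-Delta}) of Proposition \ref{vanishing-cycle}, $I_h(\De_{I'}, D_{1\cdots m}^{\vee}) = I_h(\De_{I'}, \De_{I'}^{\vee}) = H_{I',I'}$, and by (\ref{D-sum}), $D_{1\cdots m} = \sum_I \De_I$. Hence the coefficient of $\De_I$ in $\CM_0(\De_{I'})$ equals $\de_{I,I'} - c\, H_{I',I'}$, where $c := (\be-1)(\al-\prod_k \ga_k)/(\al\be)$. Since the intersection matrix $H$ is diagonal (Fact \ref{solution-cycle}), one computes $(NH)_{I,I'} = \sum_{I''} N_{I,I''} H_{I'',I'} = H_{I',I'}$, which matches the coefficient exactly, yielding $M_0 = E_{2^m} - c\, NH$.

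The only points requiring a small amount of care are the algebraic cancellation of the factor $\al\be + (-1)^m \prod_k \ga_k$ in the coefficient step, and the observation that multiplying the all-ones matrix $N$ by a diagonal matrix $H$ on the right yields a matrix whose $(I,I')$-entry depends only on $I'$; both are routine.
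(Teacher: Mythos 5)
Your proof is correct and follows essentially the same route as the paper: plug the explicit value of the self-intersection number from~(\ref{D-self-int}) into Corollary~\ref{CM_0-1} to simplify the coefficient, then evaluate the rank-one summand on each basis element $\De_{I'}$ using~(\ref{D-Delta}) and~(\ref{D-sum}) together with the diagonality of $H$ to identify it with $NH$.
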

\begin{proof}
  The expression of $\CM_0$ follows immediately from Corollary \ref{CM_0-1} and 
  (\ref{D-self-int}). 
  To obtain the representation matrix, we have to show that 
  the representation matrix of the linear map 
  $\de \mapsto I_h (\de ,D_{1\cdots m}^{\vee}) D_{1\cdots m}$ 
  is given by  $NH$. 
  By Proposition \ref{vanishing-cycle}, we have 
  \begin{align*}
    &I_h (\De_I ,D_{1\cdots m}^{\vee}) D_{1\cdots m} 
    =I_h (\De_I ,\De_I ^{\vee})D_{1\cdots m}
    =\sum _{I'} I_h (\De_I ,\De_I^{\vee}) \De_{I'} \\
    &=(\De ,\De_1 ,\De_2 ,\ldots ,\De_m ,\De_{12} ,\De_{13} ,\ldots ,\De_{1\cdots m})
    \left(
      \begin{array}{c}
        I_h (\De_I ,\De_I^{\vee}) \\ I_h (\De_I ,\De_I^{\vee}) \\ 
        \vdots \\I_h (\De_I ,\De_I^{\vee})
      \end{array}
    \right) ,
  \end{align*}
  and hence the claim is proved. 
\end{proof}

\begin{Rem}
  Let $\rho_{\infty}$ be a loop in $X$ 
  turning the hyperplane $L_{\infty} \subset \P^m$ at infinity. 
  Because of 
  $$
  \rho_{\infty} =\eta_{\vep} (\ell_1 \cdots \ell_m 
  \ell_{1\cdots 1} \ell_{1\cdots 10} \cdots \ell_{0\cdots 0})^{-1},
  $$
  we can express $\CM(\rho_{\infty})$ by 
  Corollaries \ref{CM_k}, \ref{CM_0-2}, 
  equalities (\ref{eta-rho-1}) and (\ref{eta-rho-2}); 
  see Appendix \ref{section-pi1}, for the notations $\eta_{\vep}$ and $\ell_*$. 
  However, it is too complicated to write down. 
  Here, we give the eigenvalues of $\CM(\rho_{\infty})$. 
  Similarly to Section 2.3 of \cite{Kato}, it turns out that 
  $x_m ^{-a} f(\frac{x_1}{x_m},\ldots ,\frac{x_{m-1}}{x_m},\frac{1}{x_m})$ 
  is a solution to $E_C (a,b,c)$ 
  if and only if $f(\xi_1,\ldots ,\xi_m)$ is a solution to 
  $E_C (a,a-c_m+1,(c_1,\ldots ,c_{m-1},a-b+1))$ 
  with variables $\xi_1 ,\ldots ,\xi_m$. 
  Then an argument similar to that used for Proposition \ref{k=1,,m} shows that 
  the eigenvalues of $\CM (\rho_{\infty})$ are $\al$ and $\be$. 
  Moreover, both eigenspaces are of dimension $2^{m-1}$. 
\end{Rem}

\section{Representation matrices}\label{section-matrix}
For $0\leq i \leq m$, 
the matrix representation of $\CM_i$ with respect to the basis 
$\{ \De_I \} _I$ 
is given by $M_i$ in Corollaries \ref{CM_k} and \ref{CM_0-2}. 
However, $M_0$ is too complicated to write down. 
In this section, we give another basis $\{ \De'_I \}_I$ 
of $H_m (T_{\dot{x}},u_{\dot{x}})$ and write down 
the representation matrix of $\CM_i$ with respect to this basis. 

In this and the next sections, we use the following formulas. 
\begin{Lem}\label{lemma-for-intersection}
  For a positive integer $n$ and complex numbers $\la_1 ,\ldots ,\la_n$, we have
  \begin{align}
    \label{lem-1}
    &\sum_{N\subset \{ 1,\ldots ,n \}} \prod_{l\in N} 
    \frac{\la_l}{1-\la_l}=\prod_{l=1}^n \frac{1}{1-\la_l}, 
    \quad 
    \sum_{N\subset \{ 1,\ldots ,n \}} \prod_{l\in N} 
    \frac{1}{\la_l-1}=\prod_{l=1}^n \frac{\la_l}{\la_l-1},\\
    \label{cor-of-lem-1}
    &\sum_{N\subset \{ 1,\ldots ,n \}} 
    \prod_{l\in N}(1-\la_l) \prod_{l\not\in N}\la_l  
    =\sum_{N\subset \{ 1,\ldots ,n \}} 
    (-1)^{|N|}\prod_{l\in N}(\la_l-1) \prod_{l\not\in N}\la_l =1 ,\\
    \label{cor-of-lem-2}
    &\sum_{N\subset \{ 1,\ldots ,n \}} 
    \prod_{l\in N}(\la_l -1) =\prod_{l=1}^n \la_l .
  \end{align}
\end{Lem}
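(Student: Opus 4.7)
The plan is to derive all four identities uniformly from the elementary distributive expansion
\[
\prod_{l=1}^{n}(a_l+b_l)=\sum_{N\subset\{1,\ldots,n\}}\prod_{l\in N}a_l\prod_{l\notin N}b_l,
\]
by making an appropriate choice of $(a_l,b_l)$ in each case. No analytic input is needed; the statement is a purely formal identity in the rational function field $\C(\lambda_1,\ldots,\lambda_n)$, so after clearing denominators where necessary one is left with polynomial identities verified by this single expansion.

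First I would dispose of (\ref{lem-1}). For the left-hand identity, set $a_l=\lambda_l/(1-\lambda_l)$ and $b_l=1$, so that $a_l+b_l=1/(1-\lambda_l)$; the expansion then reads exactly $\prod_l(1-\lambda_l)^{-1}=\sum_N\prod_{l\in N}\lambda_l/(1-\lambda_l)$. For the right-hand identity, set instead $a_l=1/(\lambda_l-1)$ and $b_l=1$, so that $a_l+b_l=\lambda_l/(\lambda_l-1)$, and the same expansion yields the claim. (Alternatively, one may observe that the second identity is obtained from the first by substituting $\lambda_l\mapsto\lambda_l^{-1}$ and multiplying through by $\prod_l\lambda_l/(\lambda_l-1)\cdot(1-\lambda_l^{-1})$; but the direct expansion is cleaner.)

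For (\ref{cor-of-lem-1}), the choice $a_l=1-\lambda_l,\ b_l=\lambda_l$ gives $a_l+b_l=1$, so
\[
\sum_{N}\prod_{l\in N}(1-\lambda_l)\prod_{l\notin N}\lambda_l=\prod_{l=1}^{n}1=1.
\]
The second expression in (\ref{cor-of-lem-1}) is then obtained by writing $(1-\lambda_l)=-(\lambda_l-1)$ and collecting the sign $(-1)^{|N|}$. Finally, for (\ref{cor-of-lem-2}), take $a_l=\lambda_l-1$ and $b_l=1$, so that $a_l+b_l=\lambda_l$ and the expansion becomes
\[
\prod_{l=1}^{n}\lambda_l=\sum_{N}\prod_{l\in N}(\lambda_l-1)\prod_{l\notin N}1=\sum_{N}\prod_{l\in N}(\lambda_l-1).
\]

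There is essentially no obstacle here, since each identity reduces to a single application of the distributive law. The only point requiring mild care is to specify the ring in which each equality is viewed: for (\ref{lem-1}) one must work in $\C(\lambda_1,\ldots,\lambda_n)$ away from the poles $\lambda_l=1$, whereas (\ref{cor-of-lem-1}) and (\ref{cor-of-lem-2}) are polynomial identities valid for all complex $\lambda_l$. All four formulas are stated in forms most convenient for the intersection-number computations that follow, which is why they are collected here rather than proved in-line later.
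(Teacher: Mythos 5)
Your proof is correct, and it does all four identities uniformly via the distributive expansion $\prod_l(a_l+b_l)=\sum_N\prod_{l\in N}a_l\prod_{l\notin N}b_l$. The paper's proof is substantively the same idea, but packaged slightly differently: it establishes (\ref{lem-1}) by induction on $n$ (using the step identities $1+\frac{\la_l}{1-\la_l}=\frac{1}{1-\la_l}$ and $1+\frac{1}{\la_l-1}=\frac{\la_l}{\la_l-1}$, which is exactly your distributive expansion unrolled one factor at a time), and then derives (\ref{cor-of-lem-1}) and (\ref{cor-of-lem-2}) as consequences of (\ref{lem-1}) by clearing denominators and relabelling $N\mapsto N^c$, rather than hitting them directly as you do. Your route is marginally more uniform and makes it explicit that the last two are honest polynomial identities while the first requires $\la_l\neq1$; the paper's route is marginally shorter on the page since it recycles (\ref{lem-1}). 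Either is perfectly acceptable.
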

\begin{proof}
  Because of 
  $$
  1+\frac{\la_l}{1-\la_l}=\frac{1}{1-\la_l} ,\quad 
  1+\frac{1}{\la_l-1}=\frac{\la_l}{\la_l-1}, 
  $$
  we obtain (\ref{lem-1}) 
  by induction on $n$. 
  The equalities (\ref{cor-of-lem-1}) and (\ref{cor-of-lem-2}) follow from 
  the first and the second ones of (\ref{lem-1}), respectively. 
\end{proof}

Let $P$ be the $2^m \times 2^m$ matrix whose $(N,I)$-entry is 
$$
\left\{
\begin{array}{ll}
  \DS \al \be \prod_{j\not\in I} \frac{\ga_j-1}{\ga_j} \cdot 
  \frac{\prod_{n\in N}\ga_n}{(\al -\prod_{n\in N}\ga_n)(\be -\prod_{n\in N}\ga_n)}
  &(N\subset I), \\
  0&(N\not\subset I), 
\end{array}
\right.
$$
and $\{ \De'_I \}_I$ be the basis of $H_m (T_{\dot{x}},u_{\dot{x}})$ 
defined as 
\begin{align*}
  (\De' ,\De'_1 ,\De'_2 ,\ldots ,\De'_m ,\De'_{12} ,\De'_{13} ,\ldots ,\De'_{1\cdots m})
  =(\De ,\De_1 ,\De_2 ,\ldots ,\De_m ,\De_{12} ,\De_{13} ,\ldots ,\De_{1\cdots m})P .
\end{align*}
Namely, $\De'_I$ is defined by 
$$
\De'_I=\al \be \prod_{j\not\in I} \frac{\ga_j-1}{\ga_j} \cdot 
\sum_{N\subset I}
\frac{\prod_{n\in N}\ga_n}{(\al -\prod_{n\in N}\ga_n)(\be -\prod_{n\in N}\ga_n)} 
\De_N .
$$
Note that $P$ is an upper triangular matrix. 
\begin{Lem}\label{basis-lem}
  We have 
  \begin{align*}
    \frac{(\al -\prod_k \ga_k)(\be -\prod_k \ga_k)}{\al \be \prod_k \ga_k} \De'_{1\cdots m} 
    +\sum_{I\subsetneq \{ 1,\ldots ,m \}} \left(  
      \frac{1}{\prod_{i\in I}\ga_i} 
      +(-1)^{m-|I|} \frac{\prod_k \ga_k}{\al \be}
    \right) \De'_I =D_{1\cdots m} .
  \end{align*}
\end{Lem}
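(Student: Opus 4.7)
The plan is to expand both sides in the basis $\{\De_N\}_N$ and match coefficients, taking advantage of the fact that by Proposition \ref{vanishing-cycle} the right-hand side is simply $D_{1\cdots m}=\sum_{N\subset\{1,\ldots,m\}}\De_N$. So it suffices to prove that every $\De_N$ appears in the left-hand side with coefficient $1$.

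First I would substitute the definition
$$
\De'_I=\al\be\prod_{j\not\in I}\frac{\ga_j-1}{\ga_j}\sum_{N\subset I}\frac{\prod_{n\in N}\ga_n}{(\al-\prod_{n\in N}\ga_n)(\be-\prod_{n\in N}\ga_n)}\De_N
$$
into the left-hand side and reorganize the double sum by fixing $N$ and summing over $I\supset N$. Because $P$ is upper triangular, the coefficient of $\De_N$ factors as
$$
\frac{\al\be\prod_{n\in N}\ga_n}{(\al-\prod_{n\in N}\ga_n)(\be-\prod_{n\in N}\ga_n)}\cdot S_N,
$$
where $S_N$ collects the scalar factors from the $I\supset N$ contributions, separately from the case $I=\{1,\ldots,m\}$ (which uses the special coefficient $\frac{(\al-\prod\ga_k)(\be-\prod\ga_k)}{\al\be\prod\ga_k}$) and from the case $N\subset I\subsetneq\{1,\ldots,m\}$ (which uses $\frac{1}{\prod_{i\in I}\ga_i}+(-1)^{m-|I|}\frac{\prod\ga_k}{\al\be}$). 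Substituting $J=I^c\subset N^c$, $S_N$ splits into two sums over $J$, one involving $\prod_{j\in J}(\ga_j-1)/\prod\ga_k$ and the other involving $(-1)^{|J|}\prod_{j\in J}(\ga_j-1)/\ga_j$.

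Next I would evaluate these two sums in closed form: the first by (\ref{cor-of-lem-2}) applied to $\{\ga_j\}_{j\in N^c}$, giving $\prod_{j\in N^c}\ga_j$; the second by the elementary identity $\sum_{J\subset N^c}\prod_{j\in J}(1-\ga_j)/\ga_j=\prod_{j\in N^c}(1/\ga_j)$ (a direct consequence of factoring $\prod(1+x_j)$), giving $\prod_{j\in N^c}\ga_j^{-1}$. After restoring the excluded terms $J=\emptyset$ and adding the contribution from $c_{1\cdots m}$, the four leftover monomials $\pm 1/\prod\ga_k$ and $\pm\prod\ga_k/(\al\be)$ should cancel, leaving
$$
S_N=\frac{1}{\prod_{n\in N}\ga_n}+\frac{\prod_{n\in N}\ga_n}{\al\be}-\frac{1}{\al}-\frac{1}{\be}=\frac{(\al-\prod_{n\in N}\ga_n)(\be-\prod_{n\in N}\ga_n)}{\al\be\prod_{n\in N}\ga_n}.
$$
This is the reciprocal of the prefactor above, so the coefficient of $\De_N$ collapses to $1$, completing the proof.

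The routine part is the bookkeeping of the $I=\{1,\ldots,m\}$ term versus the generic $I\subsetneq\{1,\ldots,m\}$ terms, since the coefficient formula for the former differs from a naive specialization of the latter by exactly $-1/\al-1/\be$. The main obstacle is therefore to carry out the cancellation cleanly: one must verify that the $c_{1\cdots m}$ contribution supplies precisely these two missing terms so that the four monomials in $S_N$ factor into the intended $(\al-\prod_{n\in N}\ga_n)(\be-\prod_{n\in N}\ga_n)$. Everything else is a direct application of Lemma \ref{lemma-for-intersection} and Proposition \ref{vanishing-cycle}.
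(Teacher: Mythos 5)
Your proposal is correct and follows essentially the same route as the paper: expand the left-hand side in the basis $\{\De_N\}_N$, observe that the coefficient of $\De_{1\cdots m}$ is trivially $1$, factor out $\frac{\al\be\prod_{n\in N}\ga_n}{(\al-\prod_{n\in N}\ga_n)(\be-\prod_{n\in N}\ga_n)}$ from the coefficient of $\De_N$ for $N\subsetneq\{1,\ldots,m\}$, collapse the remaining sum over $I\supset N$ via the substitution $J=I^c\subset N^c$ using the identities $\sum_{J\subset N^c}\prod_{j\in J}(\ga_j-1)=\prod_{j\in N^c}\ga_j$ and $\sum_{J\subset N^c}\prod_{j\in J}\frac{1-\ga_j}{\ga_j}=\prod_{j\in N^c}\ga_j^{-1}$ (both instances of Lemma \ref{lemma-for-intersection}), then invoke $D_{1\cdots m}=\sum_N\De_N$ from Proposition \ref{vanishing-cycle}. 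The paper states the coefficient-equals-one step more tersely, citing (\ref{cor-of-lem-1}) and (\ref{cor-of-lem-2}) without writing out the splitting and cancellation, but your more explicit bookkeeping is precisely the intended argument.
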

\begin{proof}
  By the definition, the left-hand side is equal to 
  \begin{align}
    \label{basis-lem-pf}
    &\frac{(\al -\prod_k \ga_k)(\be -\prod_k \ga_k)}{\al \be \prod_k \ga_k}
    \cdot \al \be \sum_{N\subset \{ 1,\ldots ,m \}}
    \frac{\prod_{n\in N}\ga_n}{(\al -\prod_{n\in N}\ga_n)(\be -\prod_{n\in N}\ga_n)} 
    \De_N   \\
    &+\sum_{I\subsetneq \{ 1,\ldots ,m \}} \Biggl[
    \prod_{j\not\in I}(\ga_j -1) 
    \left(  
      \frac{\al \be}{\prod_k \ga_k} +(-1)^{m-|I|} \prod_{i\in I}\ga_i
    \right)   \nonumber \\
    & \quad \quad \quad \quad \quad \quad \quad 
    \cdot \sum_{N\subset I}
    \frac{\prod_{n\in N}\ga_n}{(\al -\prod_{n\in N}\ga_n)(\be -\prod_{n\in N}\ga_n)} \De_N 
    \Biggr] .  \nonumber
  \end{align}
  Clearly the coefficient of $\De_{1\cdots m}$ in (\ref{basis-lem-pf}) is $1$. 
  The coefficient of $\De_N \ (N\neq \{ 1,\ldots ,m \})$ is 
  \begin{align*}
    &\frac{\prod_{n\in N}\ga_n}{(\al -\prod_{n\in N}\ga_n)(\be -\prod_{n\in N}\ga_n)}\\
    &\quad \cdot \Biggl(
      \frac{(\al -\prod_k \ga_k)(\be -\prod_k \ga_k)}{\prod_k \ga_k} 
      +\! \! \! \! \! \sum_{\substack{I\supset N \\ I\neq \{ 1,\ldots ,m \}}}
      \prod_{j\not\in I}(\ga_j -1) 
      \Bigl( \frac{\al \be}{\prod_k \ga_k} +(-1)^{m-|I|} \prod_{i\in I}\ga_i
      \Bigr) 
    \Biggr) 
  \end{align*}
  which equals to $1$ by the equalities (\ref{cor-of-lem-1}) and (\ref{cor-of-lem-2}). 
  Therefore, by using (\ref{D-sum}), we conclude that 
  (\ref{basis-lem-pf}) is equal to 
  $$
  \sum_{I\subset \{ 1,\ldots ,m \}} \De_I =D_{1\cdots m}. 
  $$
\end{proof}
\begin{Cor}
  For $0\leq i \leq m$, let 
  $M'_i$ be 
  the representation matrix of $\CM_i$ with respect to the basis 
  $\{ \De'_{I} \}_I$. 
  Then we have 
  \begin{align*}
      M'_0=E_{2^m}-N_0, \quad 
      M'_k=M_k +N_k \  (1\leq k \leq m) ,
  \end{align*}
  where $N_i$ is defined as follows. 
  The $(I,I')$-entry of $N_0$ (resp. $N_k$) is zero, except in the case of 
  $I'=\emptyset$ (resp. $k\in I'$ and $I=I'-\{ k \}$). 
  The $(I,\emptyset)$-entry of $N_0$ is 
  $$
  \left\{
  \begin{array}{ll}
    \DS \frac{(\al-\prod_k \ga_k)(\be -\prod_k \ga_k)}{\al \be \prod_k \ga_k}& 
    (I=\{ 1,\ldots ,m \}) ,\\
    \DS \frac{1}{\prod_{i\in I}\ga_i}+(-1)^{m-|I|}\frac{\prod_k \ga_k}{\al \be} &
    ({\rm otherwise}).
  \end{array}
  \right.
  $$
  The $(I'-\{ k\} ,I')$-entry of $N'_k$ is $1$. 
\end{Cor}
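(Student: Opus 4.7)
I would split the verification into the two cases $i \geq 1$ and $i = 0$, working directly from the expansion of $\De'_{I'}$ in the basis $\{\De_N\}$ supplied by the change-of-basis matrix $P$.

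For $1 \leq k \leq m$, I would apply $\CM_k$ termwise to
$$
\De'_{I'} = \al\be \prod_{j \notin I'} \frac{\ga_j - 1}{\ga_j} \sum_{N \subset I'} \frac{\prod_{n \in N} \ga_n}{(\al - \prod_{n \in N} \ga_n)(\be - \prod_{n \in N} \ga_n)} \De_N,
$$
using Proposition~\ref{k=1,,m}: $\CM_k(\De_N) = \ga_k^{-1}\De_N$ when $k \in N$ and $\CM_k(\De_N) = \De_N$ otherwise. If $k \notin I'$, every $N \subset I'$ has $k \notin N$, so $\CM_k(\De'_{I'}) = \De'_{I'}$. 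If $k \in I'$, I would split the sum according to whether $k \in N$, use the elementary identity $\ga_k^{-1} + \frac{\ga_k - 1}{\ga_k} = 1$ to rewrite the $k \notin N$ terms, and recognise the resulting expression as $\ga_k^{-1}\De'_{I'} + \De'_{I' - \{k\}}$ via the prefactor identity $\prod_{j \notin I' - \{k\}} \frac{\ga_j - 1}{\ga_j} = \frac{\ga_k - 1}{\ga_k}\prod_{j \notin I'} \frac{\ga_j - 1}{\ga_j}$ (valid because $k \in I'$). This gives exactly $M'_k = M_k + N_k$.

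For $i = 0$, Corollary~\ref{CM_0-2} shows the image of $\CM_0 - \mathrm{Id}$ is contained in $\C D_{1\cdots m}$, so it suffices to compute $I_h(\De'_{I'}, D^\vee_{1\cdots m})$ for each $I'$. Replacing $I_h(\De_N, D^\vee_{1\cdots m})$ by $H_{N,N}$ via Proposition~\ref{vanishing-cycle} and substituting the formula for $H_{N,N}$ from Fact~\ref{solution-cycle}, the identity $\prod_{n \in N} \ga_n \cdot \prod_{n \notin N} \ga_n = \prod_k \ga_k$ causes almost every factor to cancel, collapsing the inner sum to $\sum_{N \subset I'}(-1)^{|N|}$. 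This equals $0$ for $I' \neq \emptyset$ and $1$ for $I' = \emptyset$, so $\De'_{I'} \in W_1$ is $\CM_0$-fixed whenever $I' \neq \emptyset$, while $I_h(\De', D^\vee_{1\cdots m}) = \frac{\al\be}{(\al - \prod_k \ga_k)(\be - 1)}$. Multiplying by the scalar $\frac{(\be-1)(\al - \prod_k \ga_k)}{\al\be}$ of Corollary~\ref{CM_0-2} yields $\CM_0(\De') = \De' - D_{1\cdots m}$.

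To finish, I would invoke Lemma~\ref{basis-lem} to expand $D_{1\cdots m}$ in the basis $\{\De'_I\}_I$ and read the coefficients off as the $I' = \emptyset$ column of $N_0$, confirming $M'_0 = E_{2^m} - N_0$. The main obstacle I expect is the cancellation in the intersection-number computation: one must verify cleanly that the prefactors and $H_{N,N}$ combine so that the sum reduces to $\sum_{N \subset I'}(-1)^{|N|}$, since this vanishing is what forces all but one column of $M'_0$ to agree with the identity. Once the $k \geq 1$ telescoping and this intersection-number collapse are in place, the matrix identities follow by straightforward bookkeeping.
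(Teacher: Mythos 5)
Your proposal is correct and follows essentially the same route as the paper: both handle the $k \geq 1$ case by applying Proposition~\ref{k=1,,m} termwise to the expansion of $\De'_{I'}$ and using the identity $\ga_k^{-1} + \frac{\ga_k-1}{\ga_k} = 1$, and both handle $i = 0$ via Corollary~\ref{CM_0-2} by computing $I_h(\De'_{I'}, D_{1\cdots m}^\vee)$ through Proposition~\ref{vanishing-cycle} and Fact~\ref{solution-cycle}, observing the collapse to a signed subset count, and finishing with Lemma~\ref{basis-lem}. No meaningful differences.
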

In particular, $M'_k \ (1\leq k \leq m)$ is upper triangular, 
$M'_0$ is lower triangular, 
and the $(\emptyset, \emptyset)$-entry of $M'_0$ is 
$$
1-\left( 1+(-1)^m \frac{\prod \ga_k}{\al \be} \right) 
=(-1)^{m-1} \prod \ga_k \cdot \al^{-1} \be^{-1}. 
$$
\begin{proof}
  First, we evaluate $M'_0$. By Corollary \ref{CM_0-2}, 
  it is sufficient to show that the matrix representation of 
  the linear map 
  $$
  \de \mapsto \frac{(\be-1)(\al -\prod_k \ga_k)}{\al \be} 
  I_h (\de ,D_{1\cdots m}^{\vee}) D_{1\cdots m}
  $$
  is given by $N_0$. 
  By Fact \ref{solution-cycle} and 
  Proposition \ref{vanishing-cycle}, we have 
  \begin{align*}
    \frac{(\be-1)(\al -\prod_k \ga_k)}{\al \be} 
    I_h (\De'_{I'} ,D_{1\cdots m}^{\vee}) D_{1\cdots m} 
    =\left( \sum_{N\subset I'} (-1)^{|N|} \right) 
    \prod_{i\in I'}\frac{\ga_i}{\ga_i -1} \cdot D_{1\cdots m} ,
  \end{align*}
  and hence we obtain 
  \begin{align*}
    \frac{(\be-1)(\al -\prod_k \ga_k)}{\al \be} I_h (\De'_{I'} ,D_{1\cdots m}^{\vee}) D_{1\cdots m}
    =\left\{
      \begin{array}{ll}
        D_{1\cdots m} & (I'=\emptyset) , \\
        0 & ({\rm otherwise}).
      \end{array}
    \right.
  \end{align*}
  Thus Lemma \ref{basis-lem} shows the claim. 

  Next, we evaluate $M'_k \ (1\leq k \leq m)$. We have to show that 
  \begin{align*}
    \CM_k (\De'_I)=\left\{  
      \begin{array}{ll}
        \De'_I & (k\not\in I), \\
        \ga_k^{-1} \De'_I +\De'_{I-\{ k \}} &(k\in I).
      \end{array}
    \right.
  \end{align*}
  If $k\not\in I$, then the subsets $N$ of $I$ also satisfy 
  $k\not\in N$, and hence we have $\CM_k (\De_N)=\De_N$ by 
  Proposition \ref{k=1,,m}. 
  This implies that $\CM_k (\De'_I)=\De'_I$, for $k\not\in I$. 
  We assume $k\in I$. 
  For a subset $N$ of $I-\{ k \}$, we have 
  \begin{align*}
    \CM_k (\De_N)=\De_N =\left( \ga_k^{-1}+\frac{\ga_k-1}{\ga_k} \right) \De_N ,\quad 
    \CM_k (\De_{N\cup \{ k \}}) =\ga_k^{-1} \De_{N\cup \{ k \}}.
  \end{align*}
  Then we obtain 
  \begin{align*}
    \CM_k (\De'_I) 
    &=\ga_k^{-1} \De'_I 
    +\frac{\ga_k-1}{\ga_k} \cdot \al \be \prod_{j\not\in I} \frac{\ga_j-1}{\ga_j} 
    \cdot \! \! \! \! \! \sum_{N\subset I-\{ k \}}
    \frac{\prod_{n\in N}\ga_n}{(\al -\prod_{n\in N}\ga_n)(\be -\prod_{n\in N}\ga_n)} 
    \De_N \\
    &=\ga_k^{-1} \De'_I 
    +\al \be \prod_{j\not\in I-\{ k \}} \frac{\ga_j-1}{\ga_j} 
    \cdot \sum_{N\subset I-\{ k \}}
    \frac{\prod_{n\in N}\ga_n}{(\al -\prod_{n\in N}\ga_n)(\be -\prod_{n\in N}\ga_n)} 
    \De_N \\
    &=\ga_k^{-1} \De'_I +\De'_{I-\{ k \}} . 
  \end{align*}
\end{proof}

\begin{Ex}
  We write down $M'_i \ (0\leq i \leq m)$ for $m=2,3$. 
  \begin{enumerate}[(i)]
  \item In the case of $m=2$, 
    the representation matrices $M'_0, M'_1, M'_2$ are as follows: 
    \begin{align*}
      &M'_0 =\left(  
        \begin{array}{cccc}
          -\frac{\ga_1 \ga_2}{\al \be} &0&0&0 \\
          -\frac{1}{\ga_1}+\frac{\ga_1 \ga_2}{\al \be}&1&0&0 \\
          -\frac{1}{\ga_2}+\frac{\ga_1 \ga_2}{\al \be}&0&1&0 \\
          -\frac{(\al -\ga_1 \ga_2)(\be-\ga_1 \ga_2)}{\al \be \ga_1 \ga_2} &0&0&1
        \end{array}
      \right), \\
      &M'_1 =\left(  
        \begin{array}{cccc}
          1&1&0&0 \\
          0&\frac{1}{\ga_1}&0&0 \\
          0&0&1&1 \\
          0&0&0&\frac{1}{\ga_1}
        \end{array}
      \right) ,\quad 
      M'_2 =\left(  
        \begin{array}{cccc}
          1&0&1&0 \\
          0&1&0&1 \\
          0&0&\frac{1}{\ga_2}&0 \\
          0&0&0&\frac{1}{\ga_2}
        \end{array}
      \right) .
    \end{align*}
    These are equal to the transposed matrices of those in 
    Remark 4.4 of \cite{GM}. 
  \item In the case of $m=3$, 
    the representation matrices $M'_0, M'_1, M'_2 ,M'_3$ are as follows: 
    {\allowdisplaybreaks
    \begin{align*}
      &M'_0 =\left(  
        \begin{array}{cccccccc}
          \frac{\ga_1 \ga_2 \ga_3}{\al \be} &0&0&0&0&0&0&0 \\
          -\frac{1}{\ga_1}-\frac{\ga_1 \ga_2 \ga_3}{\al \be}&1&0&0&0&0&0&0 \\
          -\frac{1}{\ga_2}-\frac{\ga_1 \ga_2 \ga_3}{\al \be}&0&1&0&0&0&0&0 \\
          -\frac{1}{\ga_3}-\frac{\ga_1 \ga_2 \ga_3}{\al \be}&0&0&1&0&0&0&0 \\
          -\frac{1}{\ga_1 \ga_2}+\frac{\ga_1 \ga_2 \ga_3}{\al \be}&0&0&0&1&0&0&0 \\
          -\frac{1}{\ga_1 \ga_3}+\frac{\ga_1 \ga_2 \ga_3}{\al \be}&0&0&0&0&1&0&0 \\
          -\frac{1}{\ga_2 \ga_3}+\frac{\ga_1 \ga_2 \ga_3}{\al \be}&0&0&0&0&0&1&0 \\
          -\frac{(\al -\ga_1 \ga_2 \ga_3)(\be-\ga_1 \ga_2 \ga_3)}{\al \be \ga_1 \ga_2 \ga_3} 
          &0&0&0&0&0&0&1
        \end{array}
      \right) ,\\
      &M'_1 =\left(  
        \begin{array}{cccccccc}
          1&1&0&0&0&0&0&0 \\
          0&\frac{1}{\ga_1}&0&0&0&0&0&0 \\
          0&0&1&0&1&0&0&0 \\
          0&0&0&1&0&1&0&0 \\
          0&0&0&0&\frac{1}{\ga_1}&0&0&0 \\
          0&0&0&0&0&\frac{1}{\ga_1}&0&0 \\
          0&0&0&0&0&0&1&1 \\
          0&0&0&0&0&0&0&\frac{1}{\ga_1} 
        \end{array}
      \right) ,\quad 
      M'_2 =\left(  
        \begin{array}{cccccccc}
          1&0&1&0&0&0&0&0 \\
          0&1&0&0&1&0&0&0 \\
          0&0&\frac{1}{\ga_2}&0&0&0&0&0 \\
          0&0&0&1&0&0&1&0 \\
          0&0&0&0&\frac{1}{\ga_2}&0&0&0 \\
          0&0&0&0&0&1&0&1 \\
          0&0&0&0&0&0&\frac{1}{\ga_2}&0 \\
          0&0&0&0&0&0&0&\frac{1}{\ga_2} 
        \end{array}
      \right) ,\\ 
      &M'_3 =\left(  
        \begin{array}{cccccccc}
          1&0&0&1&0&0&0&0 \\
          0&1&0&0&0&1&0&0 \\
          0&0&1&0&0&0&1&0 \\
          0&0&0&\frac{1}{\ga_3}&0&0&0&0 \\
          0&0&0&0&1&0&0&1 \\
          0&0&0&0&0&\frac{1}{\ga_3}&0&0 \\
          0&0&0&0&0&0&\frac{1}{\ga_3}&0 \\
          0&0&0&0&0&0&0&\frac{1}{\ga_3} 
        \end{array}
      \right) .
    \end{align*}}
  \end{enumerate}
\end{Ex}

\section{Proof of the main theorem}\label{section-proof}
In this section, we prove Theorem \ref{main}. 
Since $\dim H_m (T_{\dot{x}}, u_{\dot{x}})=2^m$, 
it is sufficient to show that 
$D_I$'s are eigenvectors and linearly independent. 
First, we evaluate the intersection numbers 
$I_h (\De_I ,D_{I'}^{\vee})$. 
Second, we show the linear independence of $\{ D_I \}_I$ by 
evaluating the determinant of the matrix 
$\left( I_h (\De_I ,D_{I'}^{\vee}) \right)_{I,I'}$. 
Third, we prove the properties of 
the eigenspace of $\CM_0$ of eigenvalue $1$. 
Finally, we show that $D_{1\cdots m}$ is an eigenvector of $\CM_0$ 
of eigenvalue $(-1)^{m-1} \prod_k \ga_k \cdot \al^{-1} \be^{-1}$.

\subsection{An expression of $D_{1\cdots m}$}\label{appendix}
We prove Proposition \ref{vanishing-cycle} 
by using imaginary cycles and the $\De_I$'s introduced in Section \ref{section-solution-cycle}.  

Fix any $s_0 \in \sigma _I$, and set 
$$
\sqrt{-1}\R_I ^{m} :=
\{ s_0 +\sqrt{-1}(\eta_1 ,\ldots ,\eta_m) \mid (\eta_1 ,\ldots ,\eta_m) \in \R^m  \} 
\subset M_I , 
$$
which is called an imaginary cycle. 
By arguments similar to those in the proof of 
Proposition 4.3 and Theorem 4.4 in \cite{G-FC}, 
we can prove that the integration of $u\vph$ on $(\iota_I )_* (\sqrt{-1} \R_I ^m)$ also gives 
the solution $f_I$ to $E_C(a,b,c)$, under some conditions for the parameters $a,b,c$. 
Therefore, $(\iota_I )_* (\sqrt{-1} \R_I ^m)^{\vee}$ is orthogonal to the cycles 
$\De_{I'} \ (I' \neq I)$ with respect to $I_h$ 
(cf. Proof of Lemma 4.1 in \cite{GM}), 
and hence $(\iota_I )_* (\sqrt{-1} \R_I ^m)^{\vee}$ 
is a constant multiple of $\De_I^{\vee}$. 
Note that both $D_{1\cdots m}$ and 
$\iota_I (\si_I)$ 
intersect 
$\iota_I (\sqrt{-1} \R_I ^m)$ at $\iota_I (s_0)$ transversally. 
Since $D_{1\cdots m}$ and $\iota_I (\si_I)$ have 
a same orientation (c.f. Remark 4.5 (i) in \cite{G-FC}), 
we have
\begin{align*}
   I_h (D_{1\cdots m} ,(\iota_I )_* (\sqrt{-1} \R_I ^m)^{\vee})
  =I_h (\De_I ,(\iota_I )_* (\sqrt{-1} \R_I ^m)^{\vee}). 
\end{align*}
Thus we obtain  
$$
\De_I ^{\vee}=
\frac{I_h (\De_I ,\De_I ^{\vee})}
{I_h (D_{1\cdots m} ,(\iota_I )_* (\sqrt{-1} \R_I ^m)^{\vee})}
\cdot (\iota_I )_* (\sqrt{-1} \R_I ^m)^{\vee} , 
$$
which implies the first equality of (\ref{D-Delta}) because of 
\begin{align*}
  I_h (D_{1\cdots m} ,\De_I ^{\vee})=& 
  \frac{I_h (\De_I ,\De_I ^{\vee})}
  {I_h (D_{1\cdots m} ,(\iota_I )_* (\sqrt{-1} \R_I ^m)^{\vee})}
  \cdot I_h( D_{1\cdots m} , (\iota_I )_* (\sqrt{-1} \R_I ^m)^{\vee} ) 
  = I_h (\De_I ,\De_I ^{\vee}). 
\end{align*}
The second equality of (\ref{D-Delta}) is shown as 
$$
I_h (\De_I ,D_{1\cdots m}^{\vee}) =(-1)^m I_h (D_{1\cdots m} ,\De_I ^{\vee})^{\vee}
=(-1)^m I_h (\De_I ,\De_I ^{\vee})^{\vee} 
=I_h (\De_I ,\De_I ^{\vee}), 
$$
where $g(\al ,\be ,\ga_1 ,\ldots ,\ga_m)^{\vee} 
:=g(\al^{-1} ,\be^{-1} ,\ga_1^{-1} ,\ldots ,\ga_m^{-1})$ for 
$g(\al ,\be ,\ga_1 ,\ldots ,\ga_m) \in \C(\al ,\be ,\ga_1 ,\ldots ,\ga_m)$.  
The orthogonality of the $\De_I$'s implies 
$$
D_{1\cdots m} =\sum_{I} 
\frac{I_{h} (D_{1\cdots m},\De _{I}^{\vee})}{I_{h} (\De _{I} ,\De _{I}^{\vee})} \De _I 
=\sum _{I} \De_I ,
$$
which is the equality (\ref{D-sum}). 
Hence the self-intersection number of $D_{1\cdots m}$ is 
\begin{align*}
  & I_{h} (D_{1\cdots m} ,D_{1\cdots m} ^{\vee})
  =\sum_I I_{h} (\De _I ,\De _I ^{\vee}) \\
  &=\sum_{I} (-1) ^{|I|} \frac{\prod_{j\not\in I} \ga _j 
    \cdot \left( \al -\prod_{i\in I} \ga _i \right) \left( \be -\prod_{i\in I} \ga_i \right) }
  {\prod_k (\ga _k -1) \cdot \left( \al -\prod_k \ga _k \right) (\be -1)} 
  =\frac{\al \be +(-1)^m \prod_k \ga_k }{(\be-1)(\al-\prod_k \ga_k)}.
\end{align*}
At the last equality, we use (\ref{cor-of-lem-2}).
Therefore, Proposition \ref{vanishing-cycle} is proved.

\subsection{Intersection numbers}
For $I,I' \subset \{ 1,\ldots ,m \}$, we evaluate the intersection number 
$I_h (\De_I ,D_{I'}^{\vee})$. 
By Proposition \ref{vanishing-cycle}, we may assume 
$I' \neq \{ 1,\ldots ,m \}$. 
We set
\begin{align*}
  &J:=\{ 1,\ldots ,m \} -I, \quad 
  J':=\{ 1,\ldots ,m \} -I',\\
  &I_0:=I\cap I' ,\quad I_1:=I\cap J' ,\quad 
  J_0:=J\cap I' ,\quad J_1:=J\cap J' .
\end{align*}
By using $\iota_I$, we have 
$I_h (\De_I ,D_{I'}^{\vee})=I_h (\tilde{\De}_I ,\tilde{D}_{I'}^{\vee})$, 
where $\tilde{D}_{I'}:=(-1)^{|I|} \cdot (\iota_I)_* ^{-1} (D_{I'})$. 
Note that the orientation of $\tilde{D}_{I'}$ is also induced from the
natural embedding $\R^m \subset \C^m$. 
Thus $\si_I$ and $\tilde{D}_{I'}$ have the same orientation. 
For $I'\neq \emptyset$, 
$\tilde{D}_{I'}$ is a chamber 
$$
\left\{ (s_1 ,\ldots ,s_m) \in \R^m \ \left| 
    \begin{array}{l}
      s_i >0 \ (i\in I'),\ s_j <0 \ (j \not\in I'),\\ 
      (-1)^{|I_1|} v_I(s)>0,\ (-1)^{|I_1|+|J'|+1}w_I(s)>0 
    \end{array}
  \right. \right\} 
$$
loaded the branch of $u_I$ by the assignment of arguments 
as in the following. 
$$
\begin{array}{|c|c|c|c|c|c|}
  \hline 
  & s_i (i\in I') & s_i (i\in I_1) & s_i (i\in J_1) & v_I(s) & w_I(s) 
  \\ \hline 
  {\rm argument} & 0 & \pi & -\pi & |I_1| \pi & ( |I_1|-(|J'|+1) ) \pi 
  \\ \hline
\end{array}
$$
In fact, the conditions for $v_I$ and $w_I$ are simply given by
$$
1-\sum_{i\in I} \frac{x_i}{s_i} -\sum_{j\in J} s_j >0,\quad 
1-\sum_{i\in I} s_i -\sum_{j\in J} \frac{x_j}{s_j} <0 , 
$$
respectively, 
because of $|J'|=|I_1|+|J_1|$. 
In the case of $I' =\emptyset$ (then $I_0 =J_0=\emptyset$), 
$\tilde{D_{\emptyset}}=\tilde{D}$ is a chamber 
$$
\{ (s_1 ,\ldots ,s_m) \in \R^m \mid 
s_k <0 \ (1 \leq k \leq m) \} 
$$
loaded the branch of $u_I$ by the assignment of arguments 
as in the following. 
$$
\begin{array}{|c|c|c|c|c|}
  \hline 
  & s_i (i\in I_1) & s_i (i\in J_1) & v_I(s) & w_I(s) 
  \\ \hline 
  {\rm argument} & \pi & -\pi & |I_1| \pi & ( |I_1|-m ) \pi 
  \\ \hline
\end{array}
$$
\begin{Lem}\label{intersection-0}
  If $I'\neq \emptyset$ and $I\subset J'$, we have 
  $I_h (\tilde{\De}_I ,\tilde{D}_{I'}^{\vee})=0$. 
\end{Lem}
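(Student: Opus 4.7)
The plan is to show that the chain-level supports of $\tilde{\De}_I$ and $\tilde{D}_{I'}$ in $M_I$ are disjoint whenever $I \neq \emptyset$ and $I \subset J'$; then $I_h(\tilde{\De}_I,\tilde{D}_{I'}^{\vee}) = 0$ because the intersection pairing is a sum of local contributions at transverse real intersection points of representative chains. The guiding observation is that $I \subset J'$ means $I \cap J' = I$, so for any $i \in I$ the main body $\si_I$ of $\tilde{\De}_I$ (contained in $\{s_k \geq \vep\}$) and the chamber $\tilde{D}_{I'}$ (which requires $s_j < 0$ for $j \in J'$) are separated by the hyperplane $L_i = (s_i = 0)$.

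First I would handle the main body of $\tilde{\De}_I$: any $i \in I$ already forces $\si_I \cap \tilde{D}_{I'} = \emptyset$ via the sign condition $s_i \geq \vep > 0$ versus $s_i < 0$. Then for each tube $\bigcap_{p \in K} l_p \times \prod_{p \in K} S_p$ with $\emptyset \neq K \subset \{1,\ldots,m+2\}$, the face factor still lies in $\si_I$, so $s_k \geq \vep$ holds for all $k \notin K$; and for a real intersection with the real chamber $\tilde{D}_{I'}$ each circle must be traversed to a real point, forcing $s_p = \pm \vep$ for $p \in K \cap \{1,\ldots,m\}$ and analogous real values for the auxiliary tubes at $p = m+1,m+2$. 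As long as some $i \in I$ lies outside $K$, the same sign obstruction $s_i \geq \vep$ versus $s_i < 0$ applies, and the tube is disjoint from $\tilde{D}_{I'}$.

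The hard case left is $K \supseteq I$ (for instance $K = I$ when $|I| = |J'|$, i.e.\ $I = J'$), in which the coordinates $s_i$ for $i \in I$ all lie on circles and may take the value $-\vep$ compatible with $\tilde{D}_{I'}$. For this residual case I would invoke the sign requirements for $v_I$ and $w_I$ recorded in the branch table of $\tilde{D}_{I'}$. On the relevant face with $s_i = -\vep$ for $i \in I$ and $s_j \in [\vep, 1-\vep]$ for $j \in J$, the inequalities $\vep < 1/(m+1)$ and $x_k < \vep^2/m$ force $\sum_{j \in J} x_j / s_j < \vep$, whence $w_I = \prod_{j \in J} s_j \cdot \bigl(1 + |I|\vep - \sum_{j \in J} x_j / s_j\bigr) > 0$. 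But the table requires $(-1)^{|I_1| + |J'| + 1} w_I > 0$, which in the extremal configuration $|I_1| = |J'| = |I|$ reduces to $-w_I > 0$. This contradiction eliminates the last potential intersection, so the supports are disjoint and $I_h(\tilde{\De}_I, \tilde{D}_{I'}^{\vee}) = 0$.
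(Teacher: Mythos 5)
Your overall strategy---show the supports of $\tilde{\De}_I$ and $\tilde{D}_{I'}$ are disjoint by combining a sign obstruction on individual coordinates with a sign obstruction on $w_I$---is broadly the same as the paper's, but your case analysis has two genuine gaps.

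First, you treat ``$K \supseteq I$'' as the residual hard case, but this misses an obstruction coming from $J_1 := J\cap J'$. For any $j\in J_1$, the chamber $\tilde{D}_{I'}$ requires $s_j<0$ (since $j\in J'$), whereas on $\tilde{\De}_I$ one has $s_j\geq\vep$ unless $j\in K$. So the residual case is actually $K\supseteq I\cup J_1=J'$, not $K\supseteq I$; your dichotomy leaves the tubes with $I\subset K$ but $J_1\not\subset K$ unexamined, and for those you cannot yet conclude anything from your $s_i$-obstruction alone.

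Second, your $w_I$ computation is carried out only under the tacit hypothesis that $s_j\in[\vep,1-\vep]$ for \emph{all} $j\in J$, which forces $J_1=\emptyset$ (i.e.\ $I=J'$). In the general case $I\subsetneq J'$ you must allow $s_j=-\vep$ for $j\in J_1\subset K$, and then $\prod_{j\in J}s_j$ need not be positive, so ``$w_I>0$'' is not what to aim for. The robust way to phrase the obstruction is the one the paper uses: on $\tilde{D}_{I'}$ the condition $(-1)^{|I_1|+|J'|+1}w_I>0$ always unwinds to the parity-free inequality $1-\sum_{i\in I}s_i-\sum_{j\in J}x_j/s_j<0$, and then the sign conditions $s_i<0\ (i\in I_1)$, $s_j<0\ (j\in J_1)$, together with $x_j<\vep^2/m$ and $|J_0|\leq m$, force $\sum_{j\in J_0}x_j/s_j>1$, hence some $j\in J_0$ has $0<s_j<mx_j<\vep$. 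That bound places the \emph{entire} chamber $\tilde{D}_{I'}$ inside $\bigcup_{j\in J_0}\{0<s_j<\vep\}$, which the support of $\tilde{\De}_I$ avoids, so the paper never needs to enumerate tube pieces at all. If you want to salvage your tube-by-tube approach, replace the hard-case condition with $K\supseteq J'$, then set $s_k=-\vep$ for $k\in J'$ and $s_j\geq\vep$ for $j\in J_0$, and verify $1-\sum_{i\in I}s_i-\sum_{j\in J}x_j/s_j=1+|I|\vep-\sum_{j\in J_0}x_j/s_j+\sum_{j\in J_1}x_j/\vep>0$, contradicting the chamber inequality in all configurations.
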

\begin{proof}
  By the assumption, we have $J_0 =J\cap I'=I'\neq \emptyset$. 
  For $(s_1 ,\ldots ,s_m) \in \tilde{D}_{I'}$, 
  we show that at least one of the $s_j$'s ($j\in J_0$) satisfies 
  $0<s_j <m x_j$. 
  Because of $m x_j <m\cdot \frac{\vep^2}{m}<\vep$, 
  it implies that the chamber $\tilde{D}_{I'}$ is included in 
  the $\vep$-neighborhood of $(s_j=0)$, and hence 
  $\tilde{D}_{I'}$ does not intersect $\tilde{\De}_I$. 
  Thus, the lemma is proved. 
  We assume that all of the $s_j$'s ($j\in J_0$) satisfy $s_j \geq m x_j$. 
  By 
  $$
  0>1-\sum_{i\in I} s_i -\sum_{j\in J} \frac{x_j}{s_j}
  =1-\sum_{i\in I_1} s_i -\sum_{j\in J_0} \frac{x_j}{s_j}-\sum_{j\in J_1} \frac{x_j}{s_j} , 
  $$
  $s_i <0 \ (i\in I_1)$ and $s_j <0 \ (j\in J_1)$, 
  we have 
  $$
  1<1-\sum_{i\in I_1} s_i -\sum_{j\in J_1} \frac{x_j}{s_j}<\sum_{j\in J_0} \frac{x_j}{s_j}.
  $$
  However, the inequalities
  $$
  \sum_{j\in J_0} \frac{x_j}{s_j}\leq \sum_{j\in J_0} \frac{x_j}{m x_j}
  \leq \sum_{j\in J_0} \frac{1}{m} \leq 1
  $$
  lead to a contradiction to $1<\sum_{j\in J_0}\frac{x_j}{s_j}$. 
\end{proof}
We consider in the case of $I'\neq \emptyset$. 
By Lemma \ref{intersection-0}, we may assume that $I \not\subset J'$. 
If we consider $x_1 ,\ldots ,x_m \to 0$, 
the condition $(-1)^{|I_1|} v_I(s)>0$ may be replaced with 
$1-\sum_{j\in J} s_j>0$, and 
$(-1)^{|I_1|+|J'|+1}w(s)>0$ may be replaced with 
$1-\sum_{i\in I} s_i<0$ 
to judge if $s$ belongs to a central area of $\tilde{D}_{I'}$. 
This observation means that we can evaluate the intersection number 
$I_h (\tilde{\De}_I ,\tilde{D}_{I'}^{\vee})$ like that of 
the regularization of $V_I$ and ${V'_{I'}}^{\vee}$
by omitting the difference of the branches of $u_I$, where 
\begin{align}
  V_I &:=\left\{ (s_1,\ldots ,s_m)\in \R^m \ \left| \  
  s_k>0,\ 1-\sum_{i\in I} s_i>0, \ 1-\sum_{j\in J}s_j >0 \right. \right\} ,\nonumber \\
  \label{nearly}
  V'_{I'}&:=\left\{ (s_1,\ldots ,s_m)\in \R^m \left| 
      \begin{array}{c}
        s_k>0 \ (k\in I'),\ s_k<0 \ (k\in J'),\\ 
        1-\sum_{i\in I} s_i<0, \ 1-\sum_{j\in J}s_j >0
      \end{array}
    \right. \right\} .
\end{align}
Note that the chamber $V'_{I'}$ is not empty, because of $I \not\subset J'$. 
In the case of $I'=\emptyset$, we can see that 
the above claim is valid, by replacing (\ref{nearly}) with 
$$
V':= \{ (s_1,\ldots ,s_m)\in \R^m \mid 
s_k<0 \ (1\leq k \leq m) \} 
$$
(note that $1-\sum_{i\in I} s_i>0$ and $1-\sum_{j\in J}s_j >0$ hold clearly).
Recall that 
when we construct the twisted cycle $\tilde{\De}_I$, 
the exponents of $(s_i=0)$, $(s_j=0)$, 
$(1-\sum_{i\in I} s_i =0)$ and $(1-\sum_{j\in J} s_j=0)$ are 
$$
c_i-1,\quad 1-c_j ,\quad -b ,\quad \sum_{k=1}^m c_k -a-m+1 ,
$$
respectively, where $i\in I$ and $j\in J$; see Section 4 of \cite{G-FC}.  

\begin{Th}\label{intersection-theorem}
  For $I'\neq \emptyset$, we have 
  \begin{align}    
    \label{intersection2-nonempty}
    I_h (\tilde{\De}_I ,\tilde{D}_{I'}^{\vee}) 
    = (-1)^{m-|J_1|-1}
    \cdot \prod_{k\in J'} \frac{1}{1-\ga_k} \cdot \frac{1}{1-\be} 
    \cdot \Biggl[ 1+\sum_{\substack{K_I \subsetneq I_0 \\ K_J \subset J_0}} 
    \left( \prod_{i\in K_I}\frac{1}{\ga_i -1} \cdot \prod_{j\in K_J} \frac{\ga_j}{1-\ga_j} \right) \\
    \nonumber 
    \quad 
    +\frac{\al}{\prod_k \ga_k -\al} 
    \sum_{\substack{K_I \subsetneq I_0 \\ K_J \subsetneq J_0}}
    \left( \prod_{i\in K_I}\frac{1}{\ga_i -1} \cdot \prod_{j\in K_J} \frac{\ga_j}{1-\ga_j} \right) 
    \Biggr] . 
  \end{align}
  For $I' =\emptyset$, we have 
  \begin{align}
    \label{intersection2-empty}
    I_h (\tilde{\De}_I ,\tilde{D}^{\vee}) 
    =(-1)^{|I|} \cdot \prod_{k=1}^m \frac{1}{1-\ga_k}.
  \end{align}
\end{Th}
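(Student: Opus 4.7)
The plan is to use the reduction established at the end of Section~\ref{section-MR}'s discussion preceding the theorem: one replaces the computation of $I_h(\tilde{\De}_I,\tilde{D}_{I'}^{\vee})$ by the intersection between the regularization of $V_I$ and the chamber $V'_{I'}$ (or $V'$ when $I'=\emptyset$), with both cycles loaded in a common branch of $u_I$, keeping only the differences of branches as phase factors. One then localizes the intersection at the common boundary faces of the two chambers and sums the resulting contributions, using the expansion of $\tilde{\De}_I$ as $\si_I$ plus tube pieces indexed by $K\subset\{1,\ldots,m+2\}$ with coefficients $\prod_{p\in K}\frac{1}{d_p}$.

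For $I'=\emptyset$: the chamber $V'$ is the opposite orthant to $V_I$, so their closures meet only at the origin $L_1\cap\cdots\cap L_m$. Only the innermost tube piece $\prod_k S_k$ of $\tilde{\De}_I$ contributes, with coefficient $\prod_k\frac{1}{d_k}$. Substituting $d_i=\ga_i-1$ for $i\in I$ and $d_j=\ga_j^{-1}-1$ for $j\in J$ and combining with the argument phases assigned to $\tilde{D}^{\vee}$ in the second branch table gives (\ref{intersection2-empty}); the sign $(-1)^{|I|}$ arises from $\De_I=(-1)^{|I|}(\iota_I)_*(\tilde{\De}_I)$ together with the $\prod_j\ga_j$ phase consolidating into $\prod_k\frac{1}{1-\ga_k}$.

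For $I'\neq\emptyset$: the chambers $V_I$ and $V'_{I'}$ are separated by the walls $L_k$ with $k\in J'$ (opposite signs on $s_k$) and by $L_{m+1}$ (opposite signs on $1-\sum_{i\in I}s_i$), while sharing the walls $L_i$ with $i\in I'=I_0\cup J_0$ and the wall $L_{m+2}$. A tube piece of $\tilde{\De}_I$ indexed by $K\subset\{1,\ldots,m+2\}$ can contribute only if $K$ contains every separating wall, i.e.\ $J'\cup\{m+1\}\subset K$. Writing $K=K_I\sqcup K_J\sqcup J'\sqcup\{m+1\}$ with a possible extra $\{m+2\}$, where $K_I\subset I_0$ and $K_J\subset J_0$, and using $\frac{1}{d_{m+2}}=\frac{\al}{\prod_k\ga_k-\al}$, one obtains the two sums in (\ref{intersection2-nonempty}) according to whether $m+2\in K$ or not. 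The common prefactor $(-1)^{m-|J_1|-1}\prod_{k\in J'}\frac{1}{1-\ga_k}\cdot\frac{1}{1-\be}$ collects $\frac{1}{d_p}$ for $p\in J'\cup\{m+1\}$ (rewritten via $d_i=\ga_i-1$, $d_j=\ga_j^{-1}-1$, $d_{m+1}=\be^{-1}-1$) together with the cumulative phase difference between the branches on $\tilde{\De}_I$ and on $\tilde{D}_{I'}^{\vee}$. The strict condition $K_I\subsetneq I_0$ encodes the geometric mechanism behind Lemma~\ref{intersection-0}: the face $L_K$ with $K\supset I_0$ lies outside the closure of $V'_{I'}$ once $x$ is small, contributing zero.

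The main obstacle is the detailed sign and phase bookkeeping: one must combine the factor $(-1)^{|I|}$ in $\De_I=(-1)^{|I|}(\iota_I)_*\tilde{\De}_I$, the argument assignments listed in the two branch tables, the local orientation signs at each face $L_K$, and the monodromy factors carried by each small circle $S_p$ in order to arrive at the clean overall sign $(-1)^{m-|J_1|-1}$. Consistency checks are available: when $I\subset J'$ the right-hand side of (\ref{intersection2-nonempty}) must vanish, which should follow from the identities in Lemma~\ref{lemma-for-intersection} applied to the two sums; and when $I'=\{1,\ldots,m\}$ the formula must recover $I_h(\De_I,\De_I^{\vee})$ as given in Proposition~\ref{vanishing-cycle}. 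These serve both as guides during the calculation and as final sanity checks on the phase conventions.
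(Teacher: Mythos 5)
Your plan is essentially the paper's proof. The paper's Step 1 organizes the same combinatorics slightly differently: instead of directly enumerating the tube pieces of $\tilde{\De}_I$ indexed by $K\subset\{1,\ldots,m+2\}$, it invokes the degenerate-arrangement intersection formula of Kita--Yoshida (\cite{KY}) to factor $I_h(\tilde{\De}_I,\tilde{D}_{I'}^{\vee})$ as $\chi_{I,I'}\cdot\prod_{p\in J'\cup\{m+1\}}\frac{1}{d_p}$ times the self-intersection number of a cycle living inside the $(m-|J'|-1)$-dimensional flat $L=\bigcap_{j\in J'}(s_j=0)\cap(1-\sum_{i\in I}s_i=0)$, and then evaluates that self-intersection number by the usual (non-degenerate) sum over faces $L_K\cap L\neq\emptyset$. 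The set of faces one sums over is exactly your set of $K$'s containing all separating walls, so the two accounts produce the same bracket, and in particular your identification of $J'\cup\{m+1\}$ as the separating walls and $I'\cup\{m+2\}$ as the shared walls agrees with the paper.

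The one genuine gap is the part you yourself flag as the ``main obstacle'': the explicit evaluation of the branch mismatch factor $\chi_{I,I'}$. The paper makes this a separate Step~2 and works through the argument tables for $s_k$, $v_I$ and $w_I$ on both $\tilde{\De}_I$ and $\tilde{D}_{I'}$, arriving at $\chi_{I,I'}=\prod_{j\in J_1}(\ga_j^{-1}\be)\cdot\be^{-(|J_1|+1)}$ (resp.\ $\be^{-|J_1|}$ when $I'=\emptyset$), and only after combining this with the face sign $(-1)^{m-(|J'|+1)}$ and converting $\frac{1}{\ga_i-1}$, $\frac{1}{\ga_j^{-1}-1}$, $\frac{1}{\be^{-1}-1}$ into the stated form does the prefactor $(-1)^{m-|J_1|-1}\prod_{k\in J'}\frac{1}{1-\ga_k}\cdot\frac{1}{1-\be}$ emerge. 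Your sketch treats this as a bookkeeping exercise and leans on consistency checks; that is reasonable as a plan, but those checks alone would not pin down the answer and this is precisely where the content of the proof lies. One small inaccuracy in your write-up of the $I'=\emptyset$ case: the sign $(-1)^{|I|}$ in (\ref{intersection2-empty}) does not come from $\De_I=(-1)^{|I|}(\iota_I)_*\tilde{\De}_I$ --- that factor is already absorbed into the normalization $\tilde{D}_{I'}:=(-1)^{|I|}(\iota_I)_*^{-1}(D_{I'})$, and the theorem is stated for $I_h(\tilde{\De}_I,\tilde{D}^{\vee})$. The $(-1)^{|I|}$ instead appears when one rewrites $\prod_{i\in I}\frac{1}{\ga_i-1}=(-1)^{|I|}\prod_{i\in I}\frac{1}{1-\ga_i}$ and combines with $\chi_{I,\emptyset}=\prod_{j\in J}\ga_j^{-1}$ and $\prod_{j\in J}\frac{1}{\ga_j^{-1}-1}=\prod_{j\in J}\frac{\ga_j}{1-\ga_j}$.
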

\begin{proof}
Let $s_0$ be an intersection point of $\tilde{\De}_I$ and $\tilde{D}_{I'}$. 
We denote the difference of the branches of $u_I$ at $s_0$ by 
$\chi_{I,I'}$, namely, 
$$
\chi_{I,I'}:=\frac
{{\rm the \ value}\ u_I(s_0) \ {\rm with \ respect \ to \ the \ branch \ defined \ on}\ \tilde{\De}_I}
{{\rm the \ value}\ u_I(s_0) \ {\rm with \ respect \ to \ the \ branch \ defined \ on}\ \tilde{D}_{I'}} .
$$
Note that $\chi_{I,I'}$ is independent of the choice of the intersection point $s_0$. 
We prove the theorem by two steps. 

Step 1: We show that
  \begin{align}
    \label{intersection1-nonempty}
    I_h (\tilde{\De}_I ,\tilde{D}_{I'}^{\vee}) 
    =&\ \chi_{I,I'} \cdot (-1)^{m-(|J'|+1)} \cdot \prod_{i\in I_1} \frac{1}{\ga_i -1} 
    \cdot \prod_{j\in J_1} \frac{1}{\ga_j^{-1} -1} \cdot \frac{1}{\be^{-1}-1} \\
    \nonumber
    &\cdot \Biggl[ 1+\sum_{\substack{K_I \subsetneq I_0 \\ K_J \subset J_0}} 
    \left( \prod_{i\in K_I}\frac{1}{\ga_i -1} \cdot \prod_{j\in K_J} \frac{1}{\ga_j^{-1}-1} \right) \\
    \nonumber 
    &\quad +\frac{1}{\al^{-1} \prod_k \ga_k -1} 
    \sum_{\substack{K_I \subsetneq I_0 \\ K_J \subsetneq J_0}}
    \left( \prod_{i\in K_I}\frac{1}{\ga_i -1} \cdot \prod_{j\in K_J} \frac{1}{\ga_j^{-1}-1} \right) 
    \Biggr] \quad (I'\neq \emptyset),  
  \end{align}
  \begin{align}
    I_h (\tilde{\De}_I ,\tilde{D}^{\vee}) 
    =&\ \chi_{I,\emptyset} \cdot (-1)^{m-m} \cdot 
    \prod_{i\in I}\frac{1}{\ga_i -1} \cdot \prod_{j\in J} \frac{1}{\ga_j^{-1}-1} .
    \label{intersection1-empty}
  \end{align}
  We prove (\ref{intersection1-nonempty}), by using results in \cite{KY}. 
  Obviously, we have 
  $$
  \overline{V_I} \cap \overline{V'_{I'}}
  =\left\{ (s_1,\ldots ,s_m)\in \R^m \left| 
      \begin{array}{c}
        s_j=0\ (j\in J') ,\ 1-\sum_{i\in I}s_i =0,\\ 
        s_i\geq 0\ (i\in I') ,\ 1-\sum_{j\in J} s_j \geq 0
      \end{array}
    \right. \right\} ,
  $$
  which implies that the intersection number 
  $I_h (\tilde{\De}_I ,\tilde{D}_{I'}^{\vee})$ is equal to 
  the product of 
  \begin{align*}
    \chi_{I,I'}
    \cdot \prod_{i\in I\cap J'} \frac{1}{\ga_i -1} 
    \cdot \prod_{j\in J\cap J'} \frac{1}{\ga_j^{-1} -1} \cdot \frac{1}{\be^{-1}-1}
  \end{align*}
  and the self-intersection number of the twisted cycle determined by the chamber 
  $$
  \left\{ (s_1,\ldots ,s_m)\in \R^m \left| 
      \begin{array}{c}
        s_j=0\ (j\in J') ,\ 1-\sum_{i\in I}s_i =0,\\ 
        s_i > 0\ (i\in I') ,\ 1-\sum_{j\in J} s_j > 0
      \end{array}
    \right. \right\}
  $$
  in the $(m-(|J'|+1))$-dimensional space 
  $L:=\bigcap_{j\in J'}(s_j =0) \cap (1-\sum_{i\in I}s_i =0)$. 
  To evaluate this self-intersection number,  
  we investigate the non-empty intersections of 
  $(s_i=0)\ (i\in I')$, $(1-\sum_{j\in J} s_j=0)$ with $L$. 
  \begin{enumerate}[(i)]
  \item Without $(1-\sum_{j\in J} s_j=0)$: 
    we choose subsets $K$ of $I'$ such that 
    $\bigcap_{k\in K} (s_k=0) \cap L\neq \emptyset$. 
    By the condition $1-\sum_{i\in I}s_i =0$, we have
    $$
    \bigcap_{k\in K} (s_k=0) \cap L\neq \emptyset
    \Leftrightarrow
    K\cap I\subsetneq I
    \Leftrightarrow
    K=K_I \cup K_J \ (K_I \subsetneq I,\ K_J \subset J). 
    $$
  \item With $(1-\sum_{j\in J} s_j=0)$: 
    we choose subsets $K$ of $I'$ such that 
    $\bigcap_{k\in K} (s_k=0) \cap (1-\sum_{j\in J} s_j=0)\cap  L\neq \emptyset$. 
    By the conditions $1-\sum_{i\in I}s_i =0$ and $1-\sum_{j\in J} s_j=0$, 
    we have
    \begin{align*}    
      &\bigcap_{k\in K} (s_k=0) \cap \Bigl( 1-\sum_{j\in J} s_j=0 \Bigr) \cap L\neq \emptyset \\
      &\Leftrightarrow
      K\cap I\subsetneq I,\ K\cap J \subsetneq J
      \Leftrightarrow
      K=K_I \cup K_J \ (K_I \subsetneq I,\ K_J \subsetneq J). 
    \end{align*}
  \end{enumerate}
  Therefore, the self-intersection number is equal to 
  \begin{align*}
    (-1)^{m-(|J'|+1)} \cdot&\Biggl[ 1+\sum_{\substack{K_I \subsetneq I_0 \\ K_J \subset J_0}} 
    \left( \prod_{i\in K_I}\frac{1}{\ga_i -1} \cdot \prod_{j\in K_J} \frac{1}{\ga_j^{-1}-1} \right) 
    \nonumber \\
    &\quad +\frac{1}{\al^{-1} \prod_k \ga_k -1} 
    \sum_{\substack{K_I \subsetneq I_0 \\ K_J \subsetneq J_0}}
    \left( \prod_{i\in K_I}\frac{1}{\ga_i -1} \cdot \prod_{j\in K_J} \frac{1}{\ga_j^{-1}-1} \right) 
    \Biggr] ,
  \end{align*}
  and hence (\ref{intersection1-nonempty}) is proved. 
  We can obtain 
  the equality (\ref{intersection1-empty}) in a similar way. 

  Step 2: We evaluate $\chi_{I,I'}$. 
  We consider the differences of the branches of the factors of $u_I$ at an intersection point 
  of $\tilde{\De}_I$ and $\tilde{D}_{I'}$.
  \begin{enumerate}[(i)]
  \item The argument of $s_k$ on $\tilde{\De}_I$ and $\tilde{D}_{I'}$ are 
    given as in the following. 
    $$
    \begin{array}{|c|c|c|c|}
      \hline 
      & k\in I'=I_0 \cup J_0 & k\in I_1 & k\in J_1  
      \\ \hline 
      \tilde{\De}_I & 0 & \pi & \pi  
      \\ \hline
      \tilde{D}_{I'} & 0 & \pi & -\pi  
      \\ \hline
    \end{array}
    $$
    Since the exponent of $s_j \ (j\in J)$ is $C_j =1-c_j +b$, 
    the contribution by the branch of $\prod_k s_k^{C_k}$ is 
    $\prod_{j\in J_1} (\ga_j^{-1} \be)$. 
  \item We have 
    $$
    v_I=\prod_{i\in I}s_i \cdot 
    \left( 1-\sum_{j\in J}s_j -\sum_{i\in I}\frac{x_i}{s_i} \right) ,
    $$
    and the term $\sum_{i\in I}\frac{x_i}{s_i}$ does not concern 
    the difference of the branches. 
    By (i) and the fact that $s\in V'_{I'}$ satisfies $1-\sum_{j\in J} s_j>0$, 
    both the argument of $v_I$ on $\tilde{\De}_I$ and that on $\tilde{D}_{I'}$ 
    are $|I_1| \pi$, and hence 
    the contribution by the branch of $v_I^A$ is $1$. 
  \item We have 
    $$
    w_I=\prod_{j\in J}s_j \cdot 
    \left( 1-\sum_{i\in I}s_i -\sum_{j\in J}\frac{x_j}{s_j} \right) ,
    $$
    and the term $\sum_{j\in J}\frac{x_j}{s_j}$ does not concern 
    the difference of the branches. 
    By (i) and the fact that $s\in V'_{I'}$ satisfies 
    $$
    \left\{
      \begin{array}{ll}
        1-\sum_{i\in I}s_i<0 & (I'\neq \emptyset), \\
        1-\sum_{i\in I}s_i>0 & (I'= \emptyset),
      \end{array}
    \right. 
    $$
    the arguments of $w_I$ on $\tilde{\De}_I$ and $\tilde{D}_{I'}$ 
    at the intersection points 
    are as follows: 
    \begin{align*}
      &({\rm argument\ on\ }\tilde{\De}_I)=\left\{
        \begin{array}{ll}
          (|J_1|+1)\pi &(I'\neq \emptyset), \\
          |J_1|\pi &(I'= \emptyset),
        \end{array}
      \right. \\
      &({\rm argument\ on\ }\tilde{D}_{I'})=\left\{
        \begin{array}{ll}
          (|I_1|-|J'|-1)\pi & (I'\neq \emptyset), \\
          (|I_1|-m)\pi =-|J_1|\pi  & (I'= \emptyset).
        \end{array}        
      \right. 
    \end{align*}
    Here, note that $m=|J'|=|I_1|+|J_1|$, if $I'= \emptyset$. 
    Because of $|J'|=|I_1|+|J_1|$, we obtain 
    \begin{align*}
      &({\rm difference \ of \ the \ arguments \ of \ }w_I) \\
      &=\left\{
        \begin{array}{ll}
          (|J_1|+1)\pi-(|I_1|-|J'|-1)\pi
          =2(|J_1|+1)\pi & (I'\neq \emptyset),\\
          |J_1|\pi-(-|J_1|)\pi =2|J_1|\pi  & (I'= \emptyset).
        \end{array}        
      \right. 
    \end{align*}
    Since the exponent of $w_I$ is $B=-b$, 
    the contribution by the branch of $w_I^B$ is 
    \begin{align*}
      \left\{
        \begin{array}{ll}
          \be^{-(|J_1|+1)} & (I'\neq \emptyset),\\
          \be^{-|J_1|} & (I'= \emptyset).
        \end{array}        
      \right. 
    \end{align*}
  \end{enumerate}
  We thus have 
  \begin{align*}
    \chi_{I,I'} =\prod_{j\in J_1} (\ga_j^{-1} \be) \cdot \be^{-(|J_1|+1)} 
    \ \ (I'\neq \emptyset),\quad 
    \chi_{I,\emptyset}=\prod_{j\in J_1} (\ga_j^{-1} \be) \cdot \be^{-|J_1|}.
  \end{align*}
  By Step 1, we obtain 
  (\ref{intersection2-nonempty}) and (\ref{intersection2-empty}). 
\end{proof}

To simplify the equality (\ref{intersection2-nonempty}), 
we use Lemma \ref{lemma-for-intersection}. 
We summarize the results in this subsection. 
\begin{Cor}\label{intersection-cor}
  If $I'\neq \emptyset, \{ 1,\ldots ,m \}$, then we have 
  \begin{align}
    \label{intersection-nonempty}
    I_h (\De_I ,D_{I'}^{\vee}) 
    =(-1)^{|I|+|I'|-1} \cdot \prod_{k=1}^m \frac{1}{1-\ga_k}
    \cdot \frac{\prod_{i\in I_0}\ga_i -1}{1-\be}
    \cdot \frac{\prod_k \ga_k -\al \prod_{j\in J_0}\ga_j}{\prod_k \ga_k -\al}. 
  \end{align}
  This equality holds, even if $I\subset J'$. 
  For $I'=\emptyset$, we have 
  \begin{align}
    \label{intersection-empty}
    I_h (\De_I ,D^{\vee}) 
    =(-1)^{|I|} \cdot \prod_{k=1}^m \frac{1}{1-\ga_k}.
  \end{align}
\end{Cor}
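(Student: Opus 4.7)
The plan is to derive the compact formulas in the corollary by simplifying the explicit expressions from Theorem \ref{intersection-theorem} via the combinatorial identities in Lemma \ref{lemma-for-intersection}.

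For $I'=\emptyset$, equation (\ref{intersection-empty}) is literally the content of (\ref{intersection2-empty}), since $I_h(\De_I,D^\vee)=I_h(\tilde{\De}_I,\tilde{D}^\vee)$ by the definition of $\tilde{D}_{I'}$. For $I'\neq\emptyset$ I would split into two cases according to whether $I\subseteq J'$. If $I\subseteq J'$ (equivalently $I_0=\emptyset$), then Lemma \ref{intersection-0} directly gives $I_h(\De_I,D_{I'}^\vee)=0$, while the right-hand side of (\ref{intersection-nonempty}) vanishes because of the factor $\prod_{i\in I_0}\ga_i-1=1-1=0$; this explains why the formula extends ``even if $I\subset J'$''. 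When $I\not\subseteq J'$, the explicit expression (\ref{intersection2-nonempty}) applies, and what remains is to simplify its bracket.

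The central calculation is to recognize that the two sums in (\ref{intersection2-nonempty}) factor as products of independent sums over $K_I\subseteq I_0$ and $K_J\subseteq J_0$ with the extremal terms $K_I=I_0$ and $K_J=J_0$ removed. By (\ref{lem-1}) applied with $\la_l=\ga_l$, the unrestricted sums equal $\prod_{i\in I_0}\frac{\ga_i}{\ga_i-1}$ and $\prod_{j\in J_0}\frac{1}{1-\ga_j}$ respectively, so subtracting the extremal terms yields
\[
\sum_{K_I\subsetneq I_0}\prod_{i\in K_I}\frac{1}{\ga_i-1}=\frac{\prod_{i\in I_0}\ga_i-1}{\prod_{i\in I_0}(\ga_i-1)},\qquad \sum_{K_J\subsetneq J_0}\prod_{j\in K_J}\frac{\ga_j}{1-\ga_j}=\frac{1-\prod_{j\in J_0}\ga_j}{\prod_{j\in J_0}(1-\ga_j)}.
\]
After collecting these, the bracket collapses to the product of $(\prod_{i\in I_0}\ga_i-1)/\prod_{i\in I_0}(\ga_i-1)$, $\prod_{j\in J_0}(1-\ga_j)^{-1}$, and the ratio $(\prod_k\ga_k-\al\prod_{j\in J_0}\ga_j)/(\prod_k\ga_k-\al)$, the last factor arising from the identity $1+\al(1-\prod_{J_0}\ga_j)/(\prod_k\ga_k-\al)=(\prod_k\ga_k-\al\prod_{J_0}\ga_j)/(\prod_k\ga_k-\al)$.

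The main obstacle will be the sign and prefactor bookkeeping. One must verify that $m-|J_1|-1$ differs from $|I|+|I'|-1$ by exactly $|I_0|$ (using $m-|J_1|=|I|+|J_0|$ and $|I'|=|I_0|+|J_0|$), and that this discrepancy is absorbed when converting $\prod_{i\in I_0}(\ga_i-1)^{-1}$ into $\prod_{i\in I_0}(1-\ga_i)^{-1}$. One must also check that $\prod_{k\in J'}(1-\ga_k)^{-1}$ from the theorem, together with the $\prod_{i\in I_0}(1-\ga_i)^{-1}\prod_{j\in J_0}(1-\ga_j)^{-1}$ produced during simplification, reassembles into $\prod_{k=1}^m(1-\ga_k)^{-1}$, and that the boundary situations (e.g.\ $J_0=\emptyset$, where the second sum in the bracket is empty) remain consistent with the stated compact formula.
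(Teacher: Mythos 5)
Your proposal is correct and follows essentially the same route as the paper: both treat the $I'=\emptyset$ case as immediate from (\ref{intersection2-empty}), dispose of $I\subset J'$ via Lemma \ref{intersection-0} and the vanishing factor $\prod_{i\in I_0}\ga_i-1$, and for the main case factor the bracketed sums in (\ref{intersection2-nonempty}) over $K_I$ and $K_J$ separately and apply the identities of Lemma \ref{lemma-for-intersection} before collecting the sign and $\prod(1-\ga_k)^{-1}$ prefactors. The bookkeeping you flag ($m-|J_1|-1-|I_0|\equiv|I|+|I'|-1\pmod 2$ and the reassembly of $\prod_{k\in J'}$ with the new $I_0,J_0$ factors into $\prod_{k=1}^m$) is exactly what the paper carries out.
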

\begin{proof}
  Recall that $I_h (\De_I ,D_{I'}^{\vee})=I_h (\tilde{\De}_I ,\tilde{D}_{I'}^{\vee})$. 
  The equality (\ref{intersection-empty}) coincides with  
  that in Theorem \ref{intersection-theorem}. 
  If $I\subset J'$, then we have $I_0=I\cap I'=\emptyset$, and hence 
  $\prod_{i\in I_0}\ga_i -1=0$. Thus the right-hand side of 
  (\ref{intersection-nonempty}) is $0$, which is compatible with Lemma \ref{intersection-0}. 
  Then we have to show that the right-hand side of (\ref{intersection2-nonempty}) is
  equal to that of (\ref{intersection-nonempty}). 
  By (\ref{lem-1}), 
  we have 
  \begin{align*}
    &1+\sum_{\substack{K_I \subsetneq I_0 \\ K_J \subset J_0}} 
    \left( \prod_{i\in K_I}\frac{1}{\ga_i -1} \cdot \prod_{j\in K_J} \frac{\ga_j}{1-\ga_j} \right) 
    =(-1)^{|I_0|} \cdot \left( \prod_{i\in I_0}\ga_i -1 \right) \cdot 
    \prod_{k\in I'} \frac{1}{1-\ga_k}, 
  \end{align*}
  \begin{align*}
    &\sum_{\substack{K_I \subsetneq I_0 \\ K_J \subsetneq J_0}}
    \left( \prod_{i\in K_I}\frac{1}{\ga_i -1} \cdot \prod_{j\in K_J} \frac{\ga_j}{1-\ga_j} \right) 
    =(-1)^{|I_0|} \cdot \left( \prod_{i\in I_0}\ga_i -1 \right) \cdot 
    \left( 1-\prod_{j\in J_0}\ga_j \right) \cdot \prod_{k\in I'} \frac{1}{1-\ga_k} .
  \end{align*}
  Therefore, we obtain 
  \begin{align*}
    &I_h (\De_I ,D_{I'}^{\vee}) =I_h (\tilde{\De}_I ,\tilde{D}_{I'}^{\vee}) \\
    &= (-1)^{m-|J_1|-1}
    \cdot \prod_{k\in J'} \frac{1}{1-\ga_k} \cdot \frac{1}{1-\be} \\
    &\quad \cdot (-1)^{|I_0|} \cdot \left( \prod_{i\in I_0}\ga_i -1 \right) \cdot 
    \prod_{k\in I'} \frac{1}{1-\ga_k}\cdot 
    \left( 1+\frac{\al}{\prod_k \ga_k -\al}\cdot 
      \left( 1-\prod_{j\in J_0}\ga_j \right) \right) \\
    &=(-1)^{|I_1|+|J_0|-1} \cdot \prod_{k=1}^m \frac{1}{1-\ga_k}
    \cdot \frac{\prod_{i\in I_0}\ga_i -1}{1-\be}
    \cdot \frac{\prod_k \ga_k -\al \prod_{j\in J_0}\ga_j}{\prod_k \ga_k -\al}. 
  \end{align*}
  Here we use $m=|I_0|+|I_1|+|J_0|+|J_1|$. Further, since 
  $$
  |I_1|+|J_0|=|I\cap {I'}^c|+|I^c \cap I'|
  =|I\cup I'|-|I\cap I'|=|I|+|I'|-2|I\cap I'|, 
  $$
  we have $(-1)^{|I_1|+|J_0|-1}=(-1)^{|I|+|I'|-1}$. 
\end{proof}

\begin{Lem}\label{orthogonal}
  We have $I_h(D_{1\cdots m} ,D_{I'}^{\vee})=0$, if $I'\neq \{ 1,\ldots ,m \}$.  
\end{Lem}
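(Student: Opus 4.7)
The plan is to combine the expansion $D_{1\cdots m} = \sum_{I \subset \{1,\ldots,m\}} \Delta_I$ from equation (\ref{D-sum}) with the explicit intersection numbers in Corollary \ref{intersection-cor}, and then show that the resulting sums vanish by an elementary inclusion-exclusion argument. Bilinearity of $I_h$ gives
$$I_h(D_{1\cdots m}, D_{I'}^{\vee}) = \sum_{I \subset \{1,\ldots,m\}} I_h(\Delta_I, D_{I'}^{\vee}),$$
so the task reduces to showing each of the two cases $I'=\emptyset$ and $\emptyset \neq I' \subsetneq \{1,\ldots,m\}$ yields $0$.

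For $I' = \emptyset$, substituting formula (\ref{intersection-empty}) gives
$$I_h(D_{1\cdots m}, D^{\vee}) = \prod_{k=1}^m \frac{1}{1-\gamma_k} \cdot \sum_{I \subset \{1,\ldots,m\}} (-1)^{|I|} = 0,$$
since $\sum_I (-1)^{|I|} = (1-1)^m = 0$ for $m \geq 1$.

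For $\emptyset \neq I' \subsetneq \{1,\ldots,m\}$, set $J' = \{1,\ldots,m\} \setminus I'$, which is nonempty, and decompose each $I$ uniquely as $I = I_0 \sqcup I_1$ with $I_0 = I \cap I' \subset I'$ and $I_1 = I \cap J' \subset J'$. Inspecting formula (\ref{intersection-nonempty}), the only factors depending on $I$ are the sign $(-1)^{|I|} = (-1)^{|I_0|}(-1)^{|I_1|}$ and the terms $\prod_{i\in I_0}\gamma_i - 1$ and $\prod_k \gamma_k - \alpha \prod_{j\in J_0}\gamma_j$ where $J_0 = I' \setminus I_0$; crucially, none of these depend on $I_1$. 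Hence the sum factors as
$$\Biggl(\sum_{I_1 \subset J'} (-1)^{|I_1|}\Biggr) \cdot \Biggl(\sum_{I_0 \subset I'} (-1)^{|I_0|} \cdot (\text{terms in } I_0)\Biggr),$$
and the first factor equals $(1-1)^{|J'|} = 0$.

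The argument is essentially mechanical once the decomposition is set up, so there is no real obstacle: all the substantive work has been done by Proposition \ref{vanishing-cycle} and Corollary \ref{intersection-cor}. The only point requiring a moment's care is noticing that the $I_1$-dependence in (\ref{intersection-nonempty}) collapses to the sign $(-1)^{|I_1|}$, which is exactly what makes the sum over $I_1 \subset J'$ vanish whenever $J' \neq \emptyset$.
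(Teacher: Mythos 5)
Your proof is correct and non-circular, but it takes a genuinely different route from the paper. The paper's argument is purely geometric and essentially immediate: it observes that $\overline{D_{1\cdots m}}$ lies in the region $\{s_k > x_k\ \text{for all}\ k\}$ while $\overline{D_{I'}}$ for $I' \neq \{1,\dots,m\}$ is disjoint from $\{s_k \geq x_k\ \text{for all}\ k\}$, so the two closed chambers simply do not meet and the intersection number vanishes for trivial topological reasons. Your argument instead pushes everything through the algebra: you expand $D_{1\cdots m} = \sum_I \Delta_I$ via (\ref{D-sum}), substitute the closed forms from Corollary \ref{intersection-cor}, and observe that the $I_1 = I \cap J'$ dependence collapses to a sign, so the inner sum over $I_1 \subset J'$ contributes the factor $(1-1)^{|J'|} = 0$. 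This is valid since both Proposition \ref{vanishing-cycle} and Corollary \ref{intersection-cor} are established before Lemma \ref{orthogonal} in the paper, and the decomposition $I = I_0 \sqcup I_1$ with only $(-1)^{|I_1|}$ carrying the $I_1$ dependence is checked correctly. What you lose relative to the paper is economy and conceptual transparency --- the geometric fact that the chambers are disjoint is the ``real'' reason the intersection vanishes, and your computation obscures that. What you gain is independence from any geometric inspection of chamber closures: the vanishing falls out of the already-computed intersection numbers as a binomial identity, which also serves as a useful consistency check on Corollary \ref{intersection-cor}.
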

\begin{proof}
  This is obvious, since 
  \begin{align*}
    &\overline{D_{1\cdots m}}\subset \{ (s_1 ,\ldots ,s_m) \in \R^m 
    \mid s_k>x_k \ (1\leq k \leq m) \} , \\
    &\overline{D_{I'}}\cap \{ (s_1 ,\ldots ,s_m) \in \R^m 
    \mid s_k\geq x_k \ (1\leq k \leq m) \} =\emptyset .
  \end{align*}
\end{proof}

\subsection{Linear independence}
Let $\La_0$ be the matrix $\left( I_h (\De_I,D_{I'}) \right)_{I,I'}$ 
with arranging $I,\ I'$ in the same way as 
the basis $\{ \De_I \}_{I}$ (see Section \ref{section-THG}). 
In this subsection, we evaluate the determinant of $\La_0$. 
\begin{Th}\label{lin-indep}
  We have 
    \begin{align*}
    &\det \La_0 \\ &=\left\{
      \begin{array}{ll}
        \DS
        -\left( \al \be -\prod_{k=1}^m \ga_k \right)
        \frac{(\prod_k \ga_k +\al)^{2^{m-1}-1}}
        {(1-\be)^{2^m-1}(\prod_k \ga_k -\al)^{2^{m-1}}}
        \cdot \prod_{k=1}^m \frac{1}{(1-\ga_k)^{2^{m-1}}} 
        &(m\ {\rm :odd}), \\
        \DS
        \left( \al \be + \prod_{k=1}^m \ga_k \right)
        \frac{(\prod_k \ga_k +\al)^{2^{m-1}-2}}
        {(1-\be)^{2^m-1}(\prod_k \ga_k -\al)^{2^{m-1}-1}}
        \cdot \prod_{k=1}^m \frac{1}{(1-\ga_k)^{2^{m-1}}} 
        &(m \ {\rm :even}).
      \end{array}
      \right. 
  \end{align*}
  In particular, we obtain $\det \La_0 \neq 0$, 
  and hence $\{ D_I \}_I$ is linearly independent. 
\end{Th}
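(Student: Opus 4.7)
The plan is to compute $\det \La_0$ explicitly. Using Corollary \ref{intersection-cor} for columns $I' \neq \{1,\ldots,m\}$ and Proposition \ref{vanishing-cycle} combined with Fact \ref{solution-cycle} for column $I' = \{1,\ldots,m\}$, I would write every entry of $\La_0$ in closed form, then factor out the sign $(-1)^{|I|}$ from each row, the factor $\prod_k (1-\ga_k)^{-1}$ from every column, and the factor $[(1-\be)(T-\al)]^{-1}$ (with $T := \prod_k \ga_k$) from every column except $I' = \emptyset$. These scalar factorizations account for the denominator powers $(1-\ga_k)^{-2^m}$, $(1-\be)^{-(2^m-1)}$, and $(T-\al)^{-(2^m-1)}$ in the stated formula and reduce the computation to the determinant of a polynomial matrix $B'$ whose entries are $1$ for $I' = \emptyset$, $(-1)^{|I'|-1}(\ga_{I \cap I'} - 1)(T - \al \ga_{I' \setminus I})$ for generic $I'$, and $(-1)^m \ga_{J}(\al - \ga_I)(\be - \ga_I)$ for $I' = \{1,\ldots,m\}$ (with $J := \{1,\ldots,m\} \setminus I$).

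Next I would change the row basis by introducing $w_N \in \C^{2^m}$ defined by $(w_N)_I = \prod_{k \in N}(\ga_k - 1)$ when $N \subset I$ and $0$ otherwise. Since the subset-incidence matrix is unit upper triangular with respect to inclusion, this basis change has determinant $\prod_k(\ga_k - 1)^{2^{m-1}}$. Using the identity $\ga_{I \cap I'} = \sum_{N \subset I \cap I'}\prod_{k \in N}(\ga_k - 1)$ and its analog for $\ga_{I' \setminus I}$, each generic column $I'$ of $B'$ becomes $\sum_{\emptyset \neq N \subset I'}\bigl(T + (-1)^{|N|}\al \ga_{I' \setminus N}\bigr) w_N$ in the new basis, while the column $I' = \{1,\ldots,m\}$ expands into explicit coefficients of the $w_N$: the coefficient of $w_\emptyset$ is proportional to $(1-\al)(1-\be)T$ and the coefficient of $w_N$ for $N \neq \emptyset$ is $T\bigl(\ga_N + (-1)^{|N|}\al\be\bigr)/\ga_N$, using Lemma \ref{lemma-for-intersection} for the expansion.

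Ordering rows and columns by increasing cardinality (and placing $I' = \{1,\ldots,m\}$ at the size-$m$ position), a cofactor expansion of $B'$ along the row $N = \emptyset$ collapses $\det B'$ to a single minor: the cofactor attached to the $(\emptyset, \{1,\ldots,m\})$ entry vanishes because the column $I' = \emptyset$ restricted to rows $N \neq \emptyset$ is the zero column. The surviving minor is block lower triangular along the subset lattice: the $(k,\ell)$-block vanishes for $k > \ell$ because $N \subset I'$ forces $|N| \leq |I'|$, the size-$k$ diagonal block (for $1 \leq k \leq m-1$) is $\bigl(T + (-1)^k \al\bigr)$ times the $\binom{m}{k} \times \binom{m}{k}$ identity, and the size-$m$ block is $1 \times 1$ with entry $\al\be + (-1)^m T$. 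Hence, up to a tracked sign, $\det B' = \bigl(\al\be + (-1)^m T\bigr)\prod_{k=1}^{m-1}\bigl(T + (-1)^k \al\bigr)^{\binom{m}{k}}$.

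The main obstacle will be the parity bookkeeping. The factor $\prod_{k=1}^{m-1}(T + (-1)^k \al)^{\binom{m}{k}}$ separates into $(T+\al)$ and $(T-\al)$ with exponents $\sum_{k \text{ even},\, 1 \leq k \leq m-1}\binom{m}{k}$ and $\sum_{k \text{ odd},\, 1 \leq k \leq m-1}\binom{m}{k}$; these evaluate to $2^{m-1}-1$ or $2^{m-1}$ depending on the parity of $m$ (because the term $\binom{m}{m} = 1$ is either included or excluded from the boundary of the sum), producing the two cases in the stated formula. Combining with the basis-change determinant $\prod_k(\ga_k - 1)^{2^{m-1}}$ and the row/column scalars extracted in the first step gives the stated closed form, with the factor $\al\be + (-1)^m T$ (equivalently, $-(\al\be - \prod_k \ga_k)$ for $m$ odd and $\al\be + \prod_k \ga_k$ for $m$ even) arising precisely from the surviving size-$m$ diagonal block. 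Since every factor in this expression is nonzero under the genericity assumption of Remark \ref{generic}, we conclude $\det \La_0 \neq 0$, so the $D_I$ form a basis of $H_m(T_{\dot{x}}, u_{\dot{x}})$.
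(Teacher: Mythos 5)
Your proposal reaches the correct answer by a genuinely different route from the paper. The paper first adds all rows into the last one to peel off the factor $I_h(D_{1\cdots m},D_{1\cdots m}^{\vee})$ (using $\sum_I\De_I=D_{1\cdots m}$ and Lemma~\ref{orthogonal}), then triangularizes the remaining $(2^m-1)\times(2^m-1)$ block by an $(m-2)$-step recursion of column operations $\La^{(n)}$. You instead keep the full $2^m\times 2^m$ matrix and make a single change of row basis to the vectors $w_N$ with $(w_N)_I=\prod_{k\in N}(\ga_k-1)$ for $N\subset I$ and $0$ otherwise; the Möbius-type identity $\ga_K=\sum_{N\subset K}\prod_{k\in N}(\ga_k-1)$ (the paper's equality~(\ref{cor-of-lem-2})) then turns every column into a short $w_N$-expansion, the cofactor expansion along $N=\emptyset$ automatically isolates the same $(2^m-1)\times(2^m-1)$ block, and the block-triangular structure drops out in one shot. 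This is cleaner conceptually than the paper's iterated column operations, and it replaces the paper's appeal to Lemma~\ref{orthogonal} with the vanishing of $C_{N,\emptyset}$ for $N\neq\emptyset$.

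There is, however, a concrete sign error that you flag but never resolve, and it does affect the statement being proved. The correct $w_N$-expansion of a generic column $I'$ of $B'$ is
\[
B'_{\cdot,I'}=(-1)^{|I'|-1}\sum_{\emptyset\neq N\subset I'}\bigl(T+(-1)^{|N|}\al\,\ga_{I'\setminus N}\bigr)w_N ,
\]
and similarly the full column $I'=\{1,\ldots,m\}$ carries a global $(-1)^m$; your expansions omit the $(-1)^{|I'|-1}$. Carrying it through, the diagonal $(k,k)$-block is $(-1)^{k-1}(T+(-1)^k\al)$ times the identity (not $(T+(-1)^k\al)$ alone), and the accumulated sign over $k=1,\ldots,m-1$ together with the $(-1)^m$ on the final block is $(-1)^{m}$. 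That extra $(-1)^m$ is exactly what produces the leading minus sign in the odd-$m$ case of the theorem, so "up to a tracked sign" hides precisely the distinction between the two displayed cases. (There is also a small slip: the formula you call $\det B'$ is really $\det C$; $\det B'$ must include the basis-change determinant $\prod_k(\ga_k-1)^{2^{m-1}}$, which you do reintroduce one sentence later.) With those signs supplied, your computation agrees exactly with the theorem, and since $\det\La_0\neq 0$ does not depend on the overall sign, the linear-independence conclusion already follows from the sketch as written.
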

\begin{Rem}\label{generic}
  In this paper, we assume that the parameters 
  $a,\ b$, and $c=(c_1 ,\ldots ,c_m)$ are generic. 
  In fact, it is sufficient for our proof of Theorem \ref{main} to assume 
  the irreducibility condition of the system $E_C(a,b,c)$ 
  \begin{align*}
    a-\sum _{i\in I} c_i ,\quad  b-\sum _{i\in I} c_i \not\in \Z 
    \quad ( I \subset \{ 1,\ldots ,m \} ), 
  \end{align*}
  and the conditions
  \begin{align*}
    c_1 ,\ldots ,c_m \not\in \Z,\quad 
    a-\sum_{k=1}^m c_k \not\in \frac{1}{2}\Z , \quad 
    a+b-\sum_{k=1}^m c_k +\frac{m+1}{2} \not \in \Z .
  \end{align*}
\end{Rem}
To compute $\det \La_0$, 
we change $\La_0$ by elementary transformations with keeping the 
determinant 
as follows. 
Add the first, second, $\ldots$, $(2^m -1)$-th row of $\La_0$
to the $2^m$-th row of $\La_0$, 
then $2^m$-th row becomes 
\begin{align*}
  &\left( I_h \Bigl( \sum_I \De_I,D^{\vee} \Bigr) ,\ldots ,
    I_h \Bigl( \sum_I \De_I,D_{2\cdots m}^{\vee} \Bigr) ,
    I_h \Bigl( \sum_I \De_I,D_{1\cdots m}^{\vee} \Bigr) \right) \\
  &=\left( I_h(D_{1\cdots m},D^{\vee}) ,\ldots ,I_h (D_{1\cdots m},D_{2\cdots m}^{\vee}),
    I_h (D_{1\cdots m},D_{1\cdots m}^{\vee}) \right) \\
  &=\left( 0,\ldots ,0, I_h (D_{1\cdots m},D_{1\cdots m}^{\vee}) \right) ,
\end{align*}
by Lemma \ref{orthogonal}. 
It means that 
$$
\det \La_0 =I_h (D_{1\cdots m},D_{1\cdots m}^{\vee}) \cdot \det \La' ,
$$
where $\La'$ is the leading principal minor of $\La_0$ of size $2^m-1$. 
By Proposition \ref{vanishing-cycle} and Corollary \ref{intersection-cor}, we have 
\begin{align*}
  \det \La_0 
  =\frac{\al \be +(-1)^m \prod_k \ga_k}
  {(1-\be)^{2^m-1}(\prod_k \ga_k -\al)^{2^m-1}}
  \cdot \prod_{k=1}^m \frac{1}{(1-\ga_k)^{2^m-1}} \cdot \det \La ,
\end{align*}
where $\La$ is a $(2^m-1)\times (2^m-1)$ matrix whose $(I,I')$-entry is 
\begin{align*}
  &\La_{I,I'}:=
  (-1)^{|I|+|I'|-1} \cdot \left( \prod_{i\in I\cap I'}\ga_i -1 \right)
  \cdot \left( \prod_{k=1}^m \ga_k -\al \prod_{j\in I^c \cap I'}\ga_j \right) \quad 
  ( I'\neq \emptyset),\\
  &\La_{I,\emptyset} :=(-1)^{|I|}. 
\end{align*}
We write 
$$
\La =\left(  
  \begin{array}{cccc}
    \La (0,0) & \La (0,1) & \cdots & \La (0,m-1) \\
    \La (1,0) & \La (1,1) & \cdots & \La (1,m-1) \\
    \vdots & \vdots & \ddots & \vdots \\
    \La (m-1,0) & \La (m-1,1) & \cdots & \La (m-1,m-1) 
  \end{array}
\right) ,
$$
where $\La (k,k')$ is the ${m \choose k} \times {m \choose k'}$ matrix. 
Note that the entries of $\La (k,k')$ are the $(I,I')$-entries of $\La$ with 
$|I|=k,\ |I'|=k'$. 

We compute $\det \La$. 
Put $\La^{(0)}:=\La$. 
We take $\La^{(n)}$ by induction on $n$ as follows: 
for $n\geq 1$, we define $\La^{(n)}$ by replacing the columns of $I'\ (|I'|\geq n+1)$ 
of $\La^{(n-1)}$ with 
\begin{align*}
  \La^{(n-1)}_{*,I'} +\sum_{\substack{K'\subset I'\\ |K'|=n}} (-1)^{|I'|+n+1} 
  \frac{\prod_k \ga_k +(-1)^n \al \prod_{j\in {K'}^c \cap I'} \ga_j}
  {\prod_k \ga_k +(-1)^n \al} \cdot \La^{(n-1)}_{*,K'}, 
\end{align*}
where $\La^{(n-1)}_{*,I'}$ is the column of $I'$ of $\La^{(n-1)}$. 
Straightforward calculations show the following lemma. 

\begin{Lem}\label{transform}
  \begin{enumerate}[{\rm (i)}]
  \item $\det \La^{(n)} =\det \La$, $\La^{(n)}_{\emptyset ,\emptyset}=1$,  
  \item if $|I'|\geq n+1$, then 
    \begin{align*}
      \La^{(n)}_{I,I'}  
      =&(-1)^{|I|+|I'|-1} 
      \cdot \Biggl[ 
      \left( \prod_{i\in I\cap I'}\ga_i -1 \right)
      \cdot \left( \prod_{k=1}^m \ga_k -\al \prod_{j\in I^c \cap I'}\ga_j \right) \\
      &\quad \quad -\sum_{\substack{K \subset I\cap I' \\ 0<|K|\leq n}}
      \left( \prod_{i\in K}(\ga_i -1) \cdot \left( \prod_{k=1}^m \ga_k 
        +(-1)^{|K|} \al \prod_{j\in K^c \cap I'} \ga_j \right) \right)
      \Biggr] ,
    \end{align*}
  \item $k\leq n \Longrightarrow \La^{(n)}(k,k')=O \ (k'> k)$, 
  \item $\La^{(n)}(1,1),\ldots ,\La^{(n)}(n+1,n+1)$ are diagonal, 
  \item $1\leq |I|\leq n+1 \Longrightarrow 
    \La^{(n)}_{I,I}=-\prod_{i\in I}(\ga_i -1) \cdot \left( \prod_k \ga_k +(-1)^{|I|} \al \right)$. 
  \end{enumerate}
\end{Lem}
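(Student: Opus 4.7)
The plan is to establish (i) from the structural nature of the column operation and then prove (ii)--(v) simultaneously by induction on $n$, with the combinatorial engine being identities (\ref{cor-of-lem-1}) and (\ref{cor-of-lem-2}) of Lemma \ref{lemma-for-intersection}. For (i), in passing from $\La^{(n-1)}$ to $\La^{(n)}$ each column indexed by $I'$ with $|I'|\geq n+1$ is replaced by itself plus a linear combination of columns $K'\subsetneq I'$; this is a valid elementary column operation that preserves the determinant, and the column $I'=\emptyset$ is never touched, so $\La^{(n)}_{\emptyset,\emptyset}=\La_{\emptyset,\emptyset}=1$. The base case $n=0$ of (ii)--(v) reduces to direct inspection of $\La$: (ii) is the definition with the empty sum; in (iii) the factor $\prod_{i\in\emptyset\cap I'}\ga_i-1=0$ annihilates the relevant entries; (iv) for the $(1,1)$-block uses the same vanishing factor when $I\neq I'$ with $|I|=|I'|=1$; and (v) for $|I|=1$ follows from $I^c\cap I=\emptyset$, which makes $\prod_{j\in I^c\cap I}\ga_j=1$ in the defining formula.

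For the inductive step from $n-1$ to $n$, the crucial preliminary fact is that the entries $\La^{(n-1)}_{I,K'}$ with $|K'|=n$ are diagonal-like: they vanish unless $K'\subset I$, and when $K'\subset I$ they equal $(-1)^{|I|+n-1}\prod_{i\in K'}(\ga_i-1)\bigl(\prod_k\ga_k+(-1)^n\al\bigr)$. I would prove this by substituting (ii) at level $n-1$, using the decomposition $K^c\cap K'=((I\cap K')\setminus K)\cup(I^c\cap K')$, and collapsing the resulting double sum via the telescoping identity
\[
\sum_{\emptyset\neq K\subset L}\prod_{i\in K}(\ga_i-1)\Bigl[1+(-1)^{|K|}\prod_{j\in L\setminus K}\ga_j\Bigr]=0,
\]
which follows immediately from (\ref{cor-of-lem-1}) and (\ref{cor-of-lem-2}) applied to the index set $L$. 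Granting this, (ii) at level $n$ drops out directly: in the column-operation correction only the terms with $K'\subset I\cap I'$ and $|K'|=n$ survive, and their combined contribution is precisely the additional $|K|=n$ summand that upgrades the formula at level $n-1$ to the one at level $n$. Once (ii) at $n$ is in hand, (iii) at $n$ follows as well: for $|I|\leq n$ and $|I'|>|I|$ one has $|I\cap I'|\leq|I|\leq n$, so the sum over $K$ in the formula for $\La^{(n)}_{I,I'}$ exhausts all non-empty subsets of $I\cap I'$, and the displayed telescoping identity kills the expression. Claim (iv) for the new diagonal block $\La^{(n)}(n+1,n+1)$ is the special case $|I|=|I'|=n+1$ with $I\neq I'$, which forces $|I\cap I'|\leq n$; and (v) for $|I|=n+1$ is obtained by specialising (ii) at $n$ to $I'=I$, using $I^c\cap I=\emptyset$ and running a parallel cancellation via the same two identities.

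The main obstacle is the sign bookkeeping --- keeping track of $(-1)^{|I|+|I'|-1}$, $(-1)^{|I'|+n+1}$ and $(-1)^{|K|}$ as one combines the column-operation formula with (ii) at level $n-1$ --- together with the case splitting on whether $K'\subset I$ and on the size of $|I|$ relative to $n$. Conceptually, however, every claim reduces to a single instance of the telescoping identity above, so once the preliminary observation that the $|K'|=n$ columns of $\La^{(n-1)}$ are effectively diagonal is cleanly isolated, the remaining parts of the induction become parallel direct computations with no further conceptual content.
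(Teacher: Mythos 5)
Your proposal is sound and, in effect, supplies the argument that the paper compresses into the phrase ``Straightforward calculations show the following lemma,'' so there is no paper proof to compare against line by line; your realization is the natural one and works. The central computation --- that $\La^{(n-1)}_{I,K'}$ with $|K'|=n$ vanishes unless $K'\subset I$ and otherwise equals $(-1)^{|I|+n-1}\prod_{i\in K'}(\ga_i-1)\bigl(\prod_k\ga_k+(-1)^n\al\bigr)$ --- is correct: writing $L=I\cap K'$ and $Q=\prod_{j\in I^c\cap K'}\ga_j$, the decomposition ${K}^c\cap K'=(L\setminus K)\cup(I^c\cap K')$ together with identities (\ref{cor-of-lem-1}) and (\ref{cor-of-lem-2}) reduces the bracket to $\bigl(\prod_{i\in L}\ga_i-1\bigr)\bigl[(P-\al Q)-P+\al Q\bigr]=0$ when $L\subsetneq K'$, and to $\prod_{i\in K'}(\ga_i-1)(P+(-1)^n\al)$ when $L=K'$; the stated telescoping identity is then exactly $(\ref{cor-of-lem-2})$ minus $(\ref{cor-of-lem-1})$ after stripping the $K=\emptyset$ terms. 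With that fact, your derivation of (ii) at level $n$ from the column operation, and hence of (v) at $|I|=n+1$, is correct including the sign bookkeeping $(-1)^{|I'|+n+1}\cdot(-1)^{|I|+n-1}=(-1)^{|I|+|I'|}$.

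One small imprecision worth fixing: for (iii) and (iv) you apply ``the formula for $\La^{(n)}_{I,I'}$,'' but formula (ii) at level $n$ only governs columns with $|I'|\geq n+1$. For the residual case $|I|<|I'|\leq n$ you need the observation (which the paper itself records right after the lemma statement) that columns indexed by $I'$ with $|I'|\leq n$ are untouched at step $n$, so $\La^{(n)}_{I,I'}=\La^{(n-1)}_{I,I'}$; then either invoke (iii) at level $n-1$ (noting $|I|\leq |I'|-1\leq n-1$), or invoke (ii) at level $|I'|-1$ where it does apply and run the same telescoping collapse. The same remark applies to (iv): the blocks $\La^{(n)}(k,k)$ with $k\leq n$ are diagonal because they equal $\La^{(n-1)}(k,k)$, not because of the level-$n$ formula; only the new block $\La^{(n)}(n+1,n+1)$ needs (ii) at level $n$. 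These are bookkeeping gaps, not conceptual ones, and once patched your induction is complete.
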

Note that the columns of $I'\ (|I'|\leq n)$ and 
the rows of $I\ (|I|\leq n-1)$ are equal to those of $\La^{(n-1)}$. 
By using this lemma, we prove Theorem \ref{lin-indep}. 
\begin{proof}[Proof of Theorem \ref{lin-indep}]
By Lemma \ref{transform}, $\La^{(m-2)}$ is the lower triangular matrix 
whose diagonal entries are given by (i) and (v). 
Hence we obtain 
\begin{align*}
  &\det \La_0 
  =\frac{\al \be +(-1)^m \prod_k \ga_k}
  {(1-\be)^{2^m-1}(\prod_k \ga_k -\al)^{2^m-1}}
  \cdot \prod_{k=1}^m \frac{1}{(1-\ga_k)^{2^m-1}} \cdot \det \La^{(m-2)} \\
  &=(-1)^m \cdot \frac{\al \be +(-1)^m \prod_k \ga_k}
  {(1-\be)^{2^m-1}(\prod_k \ga_k -\al)^{2^m-1}}
  \cdot \prod_{k=1}^m \frac{1}{(1-\ga_k)^{2^{m-1}}} 
  \cdot \prod_{\emptyset \neq I \subsetneq \{ 1,\ldots ,m \}}
  \left( \prod_{k=1}^m \ga_k +(-1)^{|I|} \al \right) .
\end{align*}
If $m$ is odd, we have 
$$
\prod_{\emptyset \neq I \subsetneq \{ 1,\ldots ,m \}}
\left( \prod_{k=1}^m \ga_k +(-1)^{|I|} \al \right)
=\left( \prod_{k=1}^m \ga_k - \al \right) ^{2^{m-1}-1}
\cdot \left( \prod_{k=1}^m \ga_k + \al \right) ^{2^{m-1}-1} .
$$
If $m$ is even, we have
$$
\prod_{\emptyset \neq I \subsetneq \{ 1,\ldots ,m \}}
\left( \prod_{k=1}^m \ga_k +(-1)^{|I|} \al \right)
=\left( \prod_{k=1}^m \ga_k - \al \right) ^{2^{m-1}}
\cdot \left( \prod_{k=1}^m \ga_k + \al \right) ^{2^{m-1}-2} .
$$
Therefore, 
the proof of Theorem \ref{lin-indep} is completed. 
\end{proof}

\subsection{The eigenspace of $\CM_0$ associated to $1$}
By Lemma \ref{orthogonal} and Theorem \ref{lin-indep}, we have to show that
\begin{itemize}
\item[$\bullet$] $\CM_0 (D_I)=D_I$ for $I\subsetneq \{ 1,\ldots ,m \}$,
\item[$\bullet$] $\CM_0 (D_{1\cdots m})=
  \left[ (-1)^{m-1} \prod_k \ga_k \cdot \al^{-1} \be^{-1} \right] \cdot D_{1\cdots m}$, 
\end{itemize}
to prove Theorem \ref{main}. 
In this subsection, we show the first claim. 
The second one is proved in the next subsection. 

Hereafter, we use the coordinates 
$\DS (s_1 ,\ldots s_m)=\left( \frac{t_1}{x_1},\ldots ,\frac{t_m}{x_m} \right)$. 
The functions 
$v(t)$ and $w(t,x)$ are expressed as
$$
1-\sum_{k=1}^m x_k s_k ,\quad 
\prod_{k=1}^m (x_k s_k)  \cdot \left( 1-\sum_{k=1}^m \frac{1}{s_k} \right) ,
$$
respectively. 
Let 
$$
v'(s,x):=1-\sum_{k=1}^m x_k s_k  ,\quad 
w'(s):=\prod_{k=1}^m s_k  \cdot \left( 1-\sum_{k=1}^m \frac{1}{s_k} \right) .
$$
If $x_1 ,\ldots, x_m$ are positive real numbers, 
then we have 
\begin{align*}
  t_k \gtreqqless 0 \Leftrightarrow s_k \gtreqqless 0 ,\quad 
  v(t) \gtreqqless 0 \Leftrightarrow v'(s,x) \gtreqqless 0 ,\quad 
  w(t,x) \gtreqqless 0 \Leftrightarrow w'(s) \gtreqqless 0 , 
\end{align*}
and hence the expressions of the $D_I$'s are as follows:
\begin{align*}
  D_{1\cdots m} : &\ s_k >0 \ (1\leq k \leq m),\ v'(s,x)>0,\ w'(s)>0, \\
  D : &\ s_k <0 \ (1\leq k \leq m),\\
  D_I \ ({\rm otherwise}) : &\ 
  s_i>0 \ (i\in I),\ s_j <0\ (j\not\in I),\ v'(s,x)>0,\ (-1)^{m-|I|+1} w'(s)>0 .
\end{align*}
Note that if $x=(x_1,\ldots ,x_m)$ moves, then only the divisor $(v'(s,x)=0)$ varies. 

Recall that the loop $\rho_0$ 
is homotopic to the composition $\tau_0 \rho'_0 \overline{\tau_0}$, 
where 
\begin{align*}
  \tau_0 &:[0,1] \ni \theta \mapsto 
  \left( (1-\theta) \cdot \frac{1}{2m^2}+\theta \cdot \left( \frac{1}{m^2}-\vep_0 \right) \right) 
  (1,\ldots ,1) \in X, \\
  \rho'_0 &:[0,1] \ni \theta \mapsto 
  \left( \frac{1}{m^2} -\vep_0 e^{2\pi \sqrt{-1}\theta}  \right)
  (1,\ldots ,1) \in X,
\end{align*}
for a sufficiently small positive real number $\vep_0$. 
Since variations along the paths $\tau_0$ and $\overline{\tau_0}$ give 
trivial transformations of the cycles $D_I$'s, 
we have to consider the variation along $\rho'_0$ for a sufficiently small $\vep_0$. 
Let 
$x \to \left( \frac{1}{m^2},\ldots ,\frac{1}{m^2} \right)$, 
then $(v'(s,x)=0)$ and $(w'(s)=0)$ are tangent 
at $(s_1 ,\ldots ,s_m)=(m,\ldots ,m)$. 
Thus $D_{1\cdots m}$ is a vanishing cycle. 
Each $D_I \ (I\subsetneq \{ 1,\ldots ,m \})$ survives as 
$x \to \left( \frac{1}{m^2},\ldots ,\frac{1}{m^2} \right)$, 
and its variation along $\rho'_0$ is too slight to change the branch of $u_x$ on it. 
This implies that $\CM_0 (D_I) =D_I$ for $I\subsetneq \{ 1,\ldots ,m \}$.

\subsection{An eigenvector of $\CM_0$ associated to the eigenvalue 
$(-1)^{m-1}  \prod_k \ga_k \cdot \al^{-1} \be^{-1}$}
In this subsection, we show $\CM_0 (D_{1\cdots m})=
[(-1)^{m-1} \prod_k \ga_k \cdot \al^{-1} \be^{-1} ]\cdot D_{1\cdots m}$. 
As mentioned in the previous subsection, 
it is sufficient to consider the variation of $D_{1\cdots m}$ along 
$\rho'_0$ for a sufficiently small $\vep_0$. 
Thus we may consider that $D_{1\cdots m}$ is contained in a small 
neighborhood of $s=(m,\ldots ,m)$ in $\R^m$. 

Putting $x_1=\cdots =x_m=\frac{1}{m^2} -\vep_0$, we have
$$
v'(s,\rho'_0(0))
=1-\left( \frac{1}{m^2} -\vep_0  \right) 
\sum_{k=1}^m s_k .
$$
We use the coordinates system 
$$
(s'_1,\ldots ,s'_{m-1},s'_m):=
\left( s_1 -m,\ldots ,s_{m-1}-m ,\sum_{k=1}^m s_k -m^2 \right) .
$$
Note that $s_l=s'_l+m \ (1\leq l \leq m-1)$ and 
$s_m=s'_m -\sum_{l=1}^{m-1} s'_l +m$. 
Then the origin $(s'_1 ,\ldots ,s'_m)=(0,\ldots ,0)$ corresponds to 
$(s_1,\ldots, s_m)=(m\ldots ,m)$. 
Let $U$ be a small neighborhood of $(s'_1 ,\ldots ,s'_m)=(0,\ldots ,0)$ 
so that $s_k >0 \ (1\leq k \leq m)$. 
In $U$, we have 
\begin{align*}
  v'(s,\rho'_0(0))>0 \Leftrightarrow &\ 
  1-\left( \frac{1}{m^2} -\vep_0  \right) (s'_m +m^2)>0 
  \Leftrightarrow  
  s'_m <\frac{m^2}{\frac{1}{m^2}-\vep_0} \cdot \vep_0 ,\\
  w'(s)>0 \Leftrightarrow &\ 
  1-\sum_{k=1}^m \frac{1}{s_k}>0 
  \Leftrightarrow 
  s'_m>\sum_{l=1}^{m-1}s'_l -m+\frac{1}{1-\sum_{l=1}^{m-1}\frac{1}{s'_l +m}} .
\end{align*}
Hence $D_{1\cdots m}$ is expressed as 
$$
\left\{ (s'_1,\ldots ,s'_m)\in U \ \left| \ 
\sum_{l=1}^{m-1}s'_l -m+\frac{1}{1-\sum_{l=1}^{m-1}\frac{1}{s'_l +m}}
<s'_m<\frac{m^2}{\frac{1}{m^2}-\vep_0} \cdot \vep_0
\right. \right\} .
$$
Let $\theta$ move from $0$ to $1$, then the arguments of 
$\frac{1}{m^2}-\vep_0 e^{2\pi \sqrt{-1}\theta}$ 
at the start point and the end point are equal. 
Thus the argument of 
$\frac{m^2}{\frac{1}{m^2}-\vep_0 e^{2\pi \sqrt{-1}\theta}} \cdot \vep_0 e^{2\pi \sqrt{-1}\theta}$ 
increases by $2\pi$, when $\theta$ moves from $0$ to $1$. 
Put 
$$
f(s'_1, \ldots ,s'_{m-1}):=
\sum_{l=1}^{m-1}s'_l -m+\frac{1}{1-\sum_{l=1}^{m-1}\frac{1}{s'_l +m}} .
$$
Then $(s'_1 ,\ldots ,s'_{m-1})=(0,\ldots ,0)$ is a critical point of $f$, and 
the Hessian matrix $H_f (0,\ldots ,0)$ at this point is 
positive definite. 
The Morse lemma implies that $f$ is expressed as 
$$
\sum_{l=1}^{m-1} z_l^2 ,
$$
with appropriate coordinates $(z_1 ,\ldots ,z_{m-1})$ 
around the origin. 
Therefore, the claim $\CM_0 (D_{1\cdots m})=
[(-1)^{m-1} \prod_k \ga_k \cdot \al^{-1} \be^{-1} ]\cdot D_{1\cdots m}$ 
is obtained from the following lemma. 
\begin{Lem}
  For $y,\la ,\mu \in \C$, we put 
  \begin{align*}
    & Z_y :=\C^m -\left( \left( z_m-\sum_{l=1}^{m-1} z_l^2 =0\right) \cup (y-z_m=0) \right) 
    \subset \C^m ,\\
    & \nu_y (z):=\left( z_m-\sum_{l=1}^{m-1} z_l^2 \right)^{\la} \cdot (y-z_m)^{\mu} ,
  \end{align*}
  where $z_1,\ldots ,z_m$ are coordinates of $\C^m$. 
  We consider the twisted homology groups $H_m (Z_y ,\nu_y)$ $(y\in \C)$. 
  Let $\de_y \in H_m (Z_y ,\nu_y)\ (y>0)$ be expressed by the twisted cycle 
  defined by the domain 
  $$
  D(y):=
  \left\{ (z_1 ,\ldots ,z_m) \in \R^m \left| \  
  \sum_{l=1}^{m-1} z_l^2 <z_m <y \right. \right\} ,
  $$
  and let $\de'$ be the element in $H_m (Z_1 ,\nu_1)$, which is obtained by 
  the deformation of $\de_1$ along $y=e^{2\pi \sqrt{-1}\theta}$ as $\theta :0 \to 1$. 
  Then we have 
  $$
  \de' =(-1)^{m-1} e^{2\pi \sqrt{-1}(\la +\mu)} \cdot \de_1 .
  $$
\end{Lem}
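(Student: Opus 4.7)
The strategy is to separate the $y$-dependence of the twisted cycle $\delta_y$ from its geometric shape by an explicit change of variables, and then read off the monodromy from how this change of variables behaves as $y$ traverses the unit circle.

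First, I would introduce the map
\[
\Phi_y : \C^m \to \C^m ,\quad
\Phi_y (w_1,\ldots ,w_{m-1},u)
= \bigl( \sqrt{y}\, w_1,\ldots ,\sqrt{y}\, w_{m-1},\, y u \bigr),
\]
depending on a choice of $\sqrt{y}$. A direct computation gives
\[
\Phi_y^{*}\!\left( z_m-\sum_{l=1}^{m-1} z_l^2 \right) = y \Bigl( u - \sum_{l=1}^{m-1} w_l^2 \Bigr),
\qquad
\Phi_y^{*} (y-z_m) = y(1-u),
\]
so that $\Phi_y^{*} \nu_y = y^{\la+\mu} \cdot \nu^{*}(w,u)$, where $\nu^{*}(w,u) := (u-\sum w_l^2)^{\la}(1-u)^{\mu}$ is independent of $y$. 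For $y>0$ real, $\Phi_y$ sends the fixed ``model domain''
\[
D^{*} := \Bigl\{ (w_1,\ldots ,w_{m-1},u) \in \R^m \ \Big| \ \sum_{l=1}^{m-1} w_l^2 < u < 1 \Bigr\}
\]
diffeomorphically onto $D(y)$, with orientations matched by our choice of positive square root.

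Next, I would track the behavior of the family as $y=e^{2\pi\sqrt{-1}\theta}$, $\theta :0 \to 1$. The branch of $\sqrt{y}$ is continued analytically along this loop, ending with $\sqrt{y}\mapsto -\sqrt{y}$, while $y^{\la+\mu}$ picks up the factor $e^{2\pi\sqrt{-1}(\la+\mu)}$. Hence at $\theta=1$, the deformed cycle $\de'$ is the image of $D^{*}$ under the map $\Phi_1 \circ \sigma$, where $\sigma(w_1,\ldots ,w_{m-1},u)=(-w_1,\ldots ,-w_{m-1},u)$, equipped with the branch of $\nu_1$ obtained by multiplying the original one by $e^{2\pi\sqrt{-1}(\la+\mu)}$.

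Finally, I would observe that $\sigma$ preserves the set $D^{*}$ (it only changes the signs of the $w_l$, which appear squared), but reverses its orientation by $(-1)^{m-1}$, because $\sigma$ acts by $-\mathrm{id}$ on the $(m-1)$ coordinates $w_1,\ldots ,w_{m-1}$. Combining the orientation sign and the branch factor gives
\[
\de' = (-1)^{m-1}\, e^{2\pi\sqrt{-1}(\la+\mu)} \cdot \de_1,
\]
as required. The main subtlety is the careful bookkeeping of the twisted-cycle structure under $\Phi_y$: one must verify that $\Phi_y$ is a well-defined morphism of twisted chains, that the matching of orientations for $y>0$ is indeed the one claimed, and that the analytic continuation of the branches of $(z_m-\sum z_l^2)^{\la}$ and $(y-z_m)^{\mu}$ along $\rho'_0$ corresponds exactly to the analytic continuation of $y^{\la+\mu}$ through $\Phi_y^{*}$; once this identification is justified, the Morse-type normal form reduces the monodromy to the elementary computation above.
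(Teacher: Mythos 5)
Your proof is correct and takes essentially the same approach as the paper. The paper parametrizes $D(y)$ by the cube $[0,1]^m$ via a map in which $z_l$ scales like $\sqrt{y}$ and $z_m$ like $y$, and then uses the involution $r:\xi_l\mapsto 1-\xi_l$ $(1\le l\le m-1)$ to account for the sign change in the analytically continued $\sqrt{y}$ after one loop; this is exactly your $\sigma:w_l\mapsto -w_l$, and your factorization through a fixed model domain $D^{*}$ and the scaling $\Phi_y$ is just a cleaner packaging of the same branch-tracking and orientation computation.
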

\begin{proof}
  It is easy to see that the domain $D(y)$ is expressed by 
  $(\xi_1 ,\ldots ,\xi_m)\in [0,1]^m$ as 
  \begin{align*}
    z_l &=(2\xi_l -1)\sqrt{y\xi_m \prod_{j=l+1}^{m-1} (1-(2\xi_j-1)^2)}  
    \quad (1\leq l \leq m-1) ,\\
    z_m &= y\xi_m .
  \end{align*}
  The functions 
  $z_m-\sum_{l=1}^{m-1} z_l^2$ and $y-z_m$ are expressed as
  \begin{align}
    y\xi_m \left( 1-\sum_{l=1}^{m-1}(2\xi_l -1)^2 
      \prod_{j=l+1}^{m-1} \left( 1-(2\xi_j-1)^2 \right) \right), 
    \quad  y(1-\xi_m),
    \label{pull-back}
  \end{align}
  respectively. 
  We consider the variation along $y=e^{2\pi \sqrt{-1}\theta}$ 
  as $\theta :0 \to 1$. 
  The expression of the domain $D(1)$ by 
  $(\xi_1 ,\ldots ,\xi_m)\in [0,1]^m$ is changed. 
  However, by a bijection 
  $$
  r:\ 
  \xi_l \mapsto 1-\xi_l \ (1\leq l \leq m-1),\quad 
  \xi_m \mapsto \xi_m ,
  $$
  the expression coincides with 
  the original one with contributions to orientation. 
  Further, both arguments of $z_m-\sum_{l=1}^{m-1} z_l^2$ and $y-z_m$ 
  increase by $2\pi$, and 
  the expressions (\ref{pull-back}) are invariant under the bijection $r$. 
  Therefore, we obtain 
  $$
  \de' =(-1)^{m-1} e^{2\pi \sqrt{-1}(\la +\mu)} \cdot \de_1 .
  $$
\end{proof}

\appendix
\section{Fundamental group}\label{section-pi1}
In this appendix, we prove Theorem \ref{pi1}. We assume $m \geq 2$.

We regard $\C^m$ as a subset of $\P^m$ and put $L_{\infty} :=\P^m -\C^m$. 
Then we can consider that $S\cup L_{\infty }$ is a hypersurface in $\P^m$, and 
$$
X=\C^m-S =\P^m -(S\cup L_{\infty }).
$$
By a special case of the Zariski theorem of Lefschetz type 
(refer to Proposition (4.3.1) in \cite{Dimca}), 
the inclusion $L-\left( L\cap (S\cup L_{\infty }) \right) \hookrightarrow X$ 
induces a surjection 
$$
\eta : 
\pi_1 \left( L-(L\cap (S\cup L_{\infty })) \right) \to 
\pi_1 (X) ,
$$
for a line $L$ in $\P^m$, which intersects $S\cup L_{\infty }$ transversally 
and avoids its singular parts. 
Note that generators of $\pi_1 (L-(L\cap (S\cup L_{\infty })))$ 
are given by $m+2^{m-1}$ loops going once around
each of the intersection points in $L\cap S \subset \C^m$. 
To define loops in $X$ explicitly, 
we specify such a line $L$ 
in the following way. 
Let $r_1 ,\ldots ,r_{m-1}$ 
be positive real numbers satisfying 
$$
r_1 <\frac{1}{4} ,\quad r_k <\frac{r_{k-1}}{4} \ (2 \leq k \leq m-1),  
$$
and let $\vep=(\vep_1 ,\ldots ,\vep_{m-1})$ be sufficiently small positive real numbers 
such that $\vep_1 <\cdots <\vep_{m-1}$. 
We consider lines 
\begin{align*}
  L_0 : &(x_1 ,\ldots ,x_{m-1},x_m)
  =(r_1 ,\ldots ,r_{m-1} ,0) 
  +t(0 ,\ldots ,0 ,1) \quad (t\in \C), \\
  L_{\vep} : &(x_1 ,\ldots ,x_{m-1},x_m)
  =(r_1 ,\ldots ,r_{m-1} ,0) 
  +t(\vep_1 ,\ldots ,\vep_{m-1} ,1) \quad (t\in \C)
\end{align*}
in $\C^m$. 
We identify $L_{\vep}$ with $\C$ by the coordinate $t$. 
The intersection point 
$L_{\vep} \cap (x_k=0)$ is coordinated by 
$t=-\frac{r_k}{\vep_k} <0$, for $1 \leq k \leq m-1$. 
The intersection point $L_{\vep} \cap (x_m=0)$
is coordinated by $t=0$.
$L_{\vep}$ and $(R(x)=0)$ intersect at $2^{m-1}$ points. 
We coordinate the intersection points $L_{\vep} \cap (R(x)=0)$ by 
$t=t_{a_1 \cdots a_{m-1}}, \ (a_1 ,\ldots ,a_{m-1}) \in \{ 0,1 \} ^{m-1}$. 
The correspondence is as follows. 
We denote the coordinates of the intersection points $L_0 \cap (R(x)=0)$ by 
$$
t_{a_1 \cdots a_{m-1}} ^{(0)}:=
\left( 1+\sum_{k=1}^{m-1} (-1)^{a_k} \sqrt{r_k} \right)^2 .
$$
By this definition, we have 
\begin{align*}
  &t_{a_1 \cdots a_{m-1}}^{(0)} <t_{a'_1 \cdots a'_{m-1}}^{(0)} \\
  &\Longleftrightarrow  
  a_1 -a'_1=\cdots =a_{r-1} -a'_{r-1}=0 ,\ 
  a_r=1,\ a'_r=0 \\
  &\Longleftrightarrow  
  a_1 \cdots a_{m-1} > a'_1 \cdots a'_{m-1} , 
\end{align*}
where $a_1 \cdots a_{m-1}$ is regarded as a binary number. 
For example, if $m=4$, then 
$$
t_{111}^{(0)}<t_{110}^{(0)}<t_{101}^{(0)}<t_{100}^{(0)}<
t_{011}^{(0)}<t_{010}^{(0)}<t_{001}^{(0)}<t_{000}^{(0)}. 
$$
Since $L_{\vep}$ is sufficiently close to $L_0$, 
$t_{a_1 \cdots a_{m-1}}$ is supposed to be arranged near to 
$t_{a_1 \cdots a_{m-1}} ^{(0)}$. 

We can show that $L_0$ does not pass the singular part of $(R(x)=0)$. 
This implies that for sufficiently small $\vep_k$'s, 
$L_{\vep}$ also avoids the singular parts of $S\cup L_{\infty }$. 
Thus, 
$\eta_{\vep} : \pi_1 \left( L_{\vep}-(L_{\vep}\cap (S\cup L_{\infty })) \right) \to \pi_1 (X)$ 
is a surjection. 

Let $\ell_k$ be the 
loop in $L_{\vep}-(L_{\vep} \cap S)$ going once around the intersection point 
$L_{\vep} \cap (x_k=0)$, and let 
$\ell_{a_1 \cdots a_{m-1}}$ be the loop in $L_{\vep}-(L_{\vep} \cap S)$ going 
once around the intersection point $t_{a_1 \cdots a_{m-1}}$. 
Each loop approaches the intersection point through 
the upper half-plane of $t$-space; see Figure \ref{loops}. 
\begin{figure}[h]
  \centering{
    \includegraphics[scale=1.0]{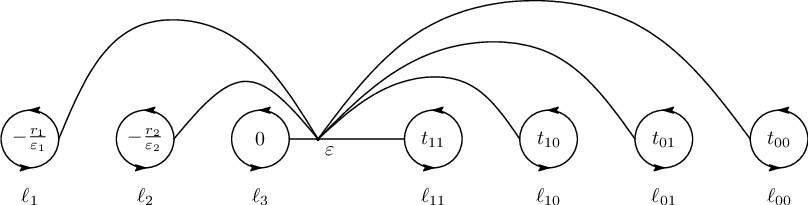} }
  \caption{$\ell_*$ for $m=3$. \label{loops}}
\end{figure}

It is easy to see that 
\begin{align}
  \eta_{\vep} (\ell_k )=\rho_k \ (1\leq k \leq m) ,\quad
  \eta_{\vep} (\ell_{1\cdots 1}) =\rho_0 .
  \label{eta-rho-1}
\end{align}
Further, we have 
$$
\rho_i \rho_j =\rho_j \rho_i \quad (1\leq i,j \leq m) , 
$$
since the fundamental group of $(\C^{\times})^m$ is abelian.
To investigate relations among the $\eta_{\vep}(\ell_{a_1 \cdots a_{m-1}})$'s, 
we consider these loops in $L_0 -(L_0 \cap S)$. 
By the above definition, we can define 
the $\ell_{a_1 \cdots a_{m-1}}$'s as loops in $L_0 -(L_0 \cap S)$. 
Since $L_0$ is sufficiently close to $L_{\vep}$, 
the image of $\ell_{a_1 \cdots a_{m-1}}$ under 
$$
\eta : \pi_1 \left( L_0 -(L_0 \cap (S\cup L_{\infty })) \right) \to \pi_1 (X)
$$
coincides with $\eta_{\vep}(\ell_{a_1 \cdots a_{m-1}})$ 
as elements in $\pi_1 (X)$. 
Though $\eta$ is not a surjection, 
relations among the $\eta (\ell_{a_1 \cdots a_{m-1}})$'s in $\pi_1 (X)$ 
can be regarded as those among the $\eta_{\vep}(\ell_{a_1 \cdots a_{m-1}})$'s. 

\begin{Lem}\label{lemma}
  \begin{enumerate}[{\rm (i)}]
  \item $\eta(\ell_{a_1 \cdots a_{k-1} 0 a_{k+1} \cdots a_{m-1}})
    =\rho_k \eta(\ell_{a_1 \cdots a_{k-1} 1 a_{k+1} \cdots a_{m-1}}) \rho_k^{-1}$. 
  \item $\eta(\ell_{1 \cdots 1}) 
    =\rho_{m-1} \eta(\ell_{1\cdots 1} \ell_{1\cdots 10} \ell_{1\cdots 1}^{-1}) 
    \rho_{m-1}^{-1}$.
  \end{enumerate}
\end{Lem}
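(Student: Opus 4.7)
The plan is to interpret both relations as the image under $\eta$ of a braid monodromy action on loops in the fiber $L_0 - L_0\cap S$. Consider the family of lines $L_{r(s)}$ obtained from $L_0$ by replacing $r_k$ with $r_k e^{2\pi\sqrt{-1}s}$ for $s\in[0,1]$, keeping the other $r_j$'s fixed; this is a loop in the parameter space $\C^{m-1}$ whose lift to $X$ represents $\rho_k$. Since each $L_{r(s)}$ meets $S$ only in the $2^{m-1}$ points $L_{r(s)}\cap(R=0)$ together with the fixed point $t=0$, the variation induces an automorphism of $\pi_1(L_0-L_0\cap S)$ (a braid on $2^{m-1}$ strands), and the standard principle $\tau_*(\ga)=\tilde\tau\,\ga\,\tilde\tau^{-1}$ identifies this automorphism with conjugation by $\rho_k$ in $\pi_1(X)$.

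The key computation is to identify this braid explicitly. Writing $c_{a_1\cdots a_{m-1}}(s)=1+\sum_{j\neq k}(-1)^{a_j}\sqrt{r_j}+(-1)^{a_k}\sqrt{r_k}\,e^{\pi\sqrt{-1}s}$ and $t_{a_1\cdots a_{m-1}}(s)=c_{a_1\cdots a_{m-1}}(s)^2$, one checks from the sign of the imaginary part that $t_{a_1\cdots 0\cdots a_{m-1}}(s)$ traces an arc in the upper half of the $t$-plane from $(\al+\sqrt{r_k})^2$ to $(\al-\sqrt{r_k})^2$, where $\al=1+\sum_{j\neq k}(-1)^{a_j}\sqrt{r_j}>0$, while $t_{a_1\cdots 1\cdots a_{m-1}}(s)$ traces the mirror arc in the lower half. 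Thus $\rho_k$ acts on each complementary pair (flipping $a_k$) as the Artin half-twist $\sigma^{-1}$, in which the $a_k=1$ strand (with smaller $t$-value) passes under the $a_k=0$ strand. On the standard free generators $x_1=\ell_{a_k=1}$ and $x_2=\ell_{a_k=0}$ (loops descending from a base point high in the upper half-plane), Artin's formulas read $\sigma^{-1}(x_1)=x_2$ and $\sigma^{-1}(x_2)=x_2^{-1}x_1x_2$.

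Part (i) then follows at once from $\sigma^{-1}(x_1)=x_2$: the braid $(\rho_k)_*$ sends $\ell_{a_1\cdots 1\cdots a_{m-1}}$ to $\ell_{a_1\cdots 0\cdots a_{m-1}}$, so $\rho_k\,\eta(\ell_{a_1\cdots 1\cdots a_{m-1}})\,\rho_k^{-1}=\eta(\ell_{a_1\cdots 0\cdots a_{m-1}})$. For part (ii), we instead use the braid induced by $\rho_{m-1}^{-1}$, which is $\sigma$; in the case $k=m-1$ and $(a_1,\ldots,a_{m-2})=(1,\ldots,1)$, Artin's formula $\sigma(x_1)=x_1x_2x_1^{-1}$ gives $\rho_{m-1}^{-1}\,\eta(\ell_{1\cdots 1})\,\rho_{m-1}=\eta(\ell_{1\cdots 1}\ell_{1\cdots 10}\ell_{1\cdots 1}^{-1})$, which rearranges to the asserted identity.

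The main obstacle is the rigorous passage from the geometric motion of the intersection points to the Artin automorphism of the free group. This requires pinning down the base point (high in the upper half-plane of $t$), fixing the standard approach paths of the $\ell_*$ descending through the upper half-plane, and tracking how each approach path deforms continuously as its target puncture moves. For (i) the target moves through the upper half-plane and no extra wrapping occurs; for (ii) the target plunges into the lower half-plane while its companion strand sweeps through the upper half-plane, forcing the approach path to wrap once around the companion and producing the conjugation by $\ell_{1\cdots 1}$ in the Artin formula.
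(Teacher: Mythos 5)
Your approach is essentially the paper's: both track the intersection points $t_{a_1\cdots a_{m-1}}^{(\theta)}$ as $r_k$ is rotated once, observe that the complementary pair (differing only in $a_k$) swaps by passing through the upper and lower half-planes respectively, and convert the motion of the base point into a conjugation by $\rho_k$; the paper just carries out the resulting loop deformation by hand rather than citing Artin's formulas. The ``main obstacle'' you flag at the end---the rigorous non-interference of the remaining $2^{m-1}-2$ punctures while the loop is dragged---is precisely what the paper isolates as Lemma~\ref{lem-loop-change} and proves by noting that $\tilde t^{(\theta)}_{a'_1\cdots a'_{m-1}}-\tilde t^{(\theta)}_{a_1\cdots a_{m-1}}$ is independent of $\theta$ whenever $a'_k=a_k$, so those punctures move in parallel with the tracked pair and never cross the deforming loop.
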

Temporarily, we admit this lemma. 
By (i), we have 
\begin{align}
  \label{eta-rho-2}  
  \eta_{\vep}(\ell_{a_1 \cdots a_{m-1}})
  =\eta(\ell_{a_1 \cdots a_{m-1}}) 
  &=(\rho_1^{b_1} \cdots \rho_{m-1}^{b_{m-1}}) \cdot 
  \eta(\ell_{1\cdots 1}) \cdot
  (\rho_1^{b_1} \cdots \rho_{m-1}^{b_{m-1}})^{-1} 
  \\
  \nonumber 
  &=(\rho_1^{b_1} \cdots \rho_{m-1}^{b_{m-1}}) \cdot 
  \rho_0 \cdot
  (\rho_1^{b_1} \cdots \rho_{m-1}^{b_{m-1}})^{-1} 
\end{align}
as elements in $\pi_1(X)$, 
where $(b_1,\ldots ,b_{m-1}):=(1-a_1,\ldots ,1-a_{m-1})$. 
This implies that the loops $\rho_0,\ldots ,\rho_m$ generate $\pi_1 (X)$, 
since the images of the $\ell_k$'s and $\ell_{a_1 \cdots a_{m-1}}$'s by $\eta$ 
generate $\pi_1 (X)$. 
By (ii) and the above argument, we obtain 
\begin{align*}
  \rho_0 =\eta(\ell_{1\cdots 1})
  &=\rho_{m-1} \eta(\ell_{1\cdots 1} \ell_{1\cdots 10} \ell_{1\cdots 1}^{-1}) 
  \rho_{m-1}^{-1} \\
  &=\rho_{m-1} \cdot \rho_0 \cdot \rho_{m-1} \rho_0 \rho_{m-1}^{-1} \cdot
  \rho_0^{-1} \cdot \rho_{m-1}^{-1}, 
\end{align*}
that is, $(\rho_0 \rho_{m-1})^2 =(\rho_{m-1} \rho_0)^2$. 
Changing the definitions of $L_0$ and $L_{\vep}$, 
we obtain the relations 
$$
(\rho_0 \rho_k)^2 =(\rho_k \rho_0)^2 \quad (1\leq k \leq m).
$$
For example, if we put 
$$
L_{\vep} : (x_1 ,x_2, \ldots ,x_m)
=(0,r_1 ,\ldots ,r_{m-1} ) 
+t(1, \vep_1 ,\ldots ,\vep_{m-1} ) \quad (t\in \C) ,
$$
then a similar argument shows 
$(\rho_0 \rho_m)^2 =(\rho_m \rho_0)^2$. 
Therefore, the proof of Theorem \ref{pi1} is completed. 

\begin{proof}[Proof of Lemma \ref{lemma}]
  For $\theta \in [0,1]$, 
  let $L(\theta)$ be the line defined by 
  \begin{align*}
    L(\theta) :&(x_1 ,\ldots ,x_k ,\ldots ,x_{m-1} ,x_m) \\
    &=(r_1 ,\ldots ,e^{2\pi \sqrt{-1}\theta }r_k ,\ldots ,r_{m-1} ,0) 
    +t(0 ,\ldots ,0 ,1) \quad (t\in \C) .
  \end{align*}
  Note that $L(0)=L(1)=L_0$. 
  We identify $L(\theta)$ with $\C$ by the coordinate $t$. 
  It is easy to see that the intersection points of 
  $L(\theta)$ and $(R(x)=0)$ are given by the following $2^{m-1}$ elements:
  $$
  t_{a_1 \cdots a_{m-1}} ^{(\theta )}:=
  \left( 1+\sum_{\substack{j=1\\j \neq k}}^{m-1} (-1)^{a_j} \sqrt{r_j} 
  +(-1)^{a_k} \sqrt{r_k}e^{\pi \sqrt{-1}\theta} \right)^2 .
  $$
  The points $1+\sum_{j \neq k} (-1)^{a_j} \sqrt{r_j} 
  +(-1)^{a_k} \sqrt{r_k}e^{\pi \sqrt{-1}\theta}$ are in the right half-plane 
  for any $\theta \in [0,1]$, 
  since $\sum_{j=1}^{m-1} \sqrt{r_j} <\sum_{j=1}^{m-1} 2^{-j}<1$. 
  Let $\theta$ move from $0$ to $1$, then 
  \begin{enumerate}[(a)]
  \item $t_{a_1 \cdots a_{k-1} 0 a_{k+1} \cdots a_{m-1}}^{(1)}
    =t_{a_1 \cdots a_{k-1} 1 a_{k+1} \cdots a_{m-1}}^{(0)}$,  
    $t_{a_1 \cdots a_{k-1} 1 a_{k+1} \cdots a_{m-1}}^{(1)}
    =t_{a_1 \cdots a_{k-1} 0 a_{k+1} \cdots a_{m-1}}^{(0)}$, 
  \item $t_{a_1 \cdots a_{k-1} 0 a_{k+1} \cdots a_{m-1}}^{(\theta)}$ 
    moves in the upper half-plane, 
  \item $t_{a_1 \cdots a_{k-1} 1 a_{k+1} \cdots a_{m-1}}^{(\theta)}$ 
    moves in the lower half-plane. 
  \end{enumerate}
  For example, the $t_{a_1 a_2 a_3}$'s move as Figure \ref{point-change}, 
  for $m=4$ and $k=2$. 
  \begin{figure}[h]
    \centering{
      \includegraphics[scale=1.0]{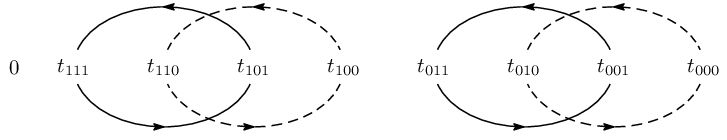} }
    \caption{$t_{a_1 a_2 a_3}$ for $m=4,\ k=2$. \label{point-change}}
  \end{figure}

  We put $P(\theta) :=\C -\{ t_{a_1 \cdots a_{m-1}}^{(\theta)} \mid a_j \in \{ 0,1 \} \}$ 
  that is regarded as a subset of $L(\theta )$. 
  Let $\vep'$ be a sufficiently small positive real number, 
  and we consider the fundamental group $\pi_1 (P(\theta),\vep')$. 
  As mentioned above, the $\ell_{a_1 \cdots a_{m-1}}$'s are defined as 
  elements in $\pi_1 (P(0),\vep')=\pi_1 (P(1),\vep')$. 
  Let $\theta$ move from $0$ to $1$, 
  then the $\ell_{a_1 \cdots a_{m-1}}$'s 
  define the elements in each $\pi_1 (P(\theta),\vep')$ naturally. 
  The properties (a), (b), (c) imply the following. 
  \begin{Lem}
    \label{lem-loop-change}
    $\ell_{a_1 \cdots a_{k-1} 0 a_{k+1} \cdots a_{m-1}}$ in $\pi_1 (P(0),\vep')$ 
    changes into $\ell_{a_1 \cdots a_{k-1} 1 a_{k+1} \cdots a_{m-1}}$ 
    in $\pi_1 (P(1),\vep')$. 
  \end{Lem}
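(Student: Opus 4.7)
The statement is essentially a braid-theoretic computation governed by properties (a), (b), (c). The deformation $\theta:0\to 1$ realizes a braid of the $2^{m-1}$ punctures in which each pair $\{t^{(\theta)}_{\cdots 0\cdots},\ t^{(\theta)}_{\cdots 1\cdots}\}$ (with the other bits held fixed) swaps positions, the $\cdots 0\cdots$ strand passing over through the upper half-plane and the $\cdots 1\cdots$ strand passing under through the lower half-plane. The plan is to track how the distinguished loop $\ell_{a_1\cdots 0\cdots a_{m-1}}$ transforms under this braid and show that at $\theta=1$ it is homotopic to $\ell_{a_1\cdots 1\cdots a_{m-1}}$.

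First I would fix the pair of interest, write $p_0^{(\theta)}:=t^{(\theta)}_{a_1\cdots 0\cdots a_{m-1}}$ and $p_1^{(\theta)}:=t^{(\theta)}_{a_1\cdots 1\cdots a_{m-1}}$, and choose a continuous family of representative loops $\ell^{(\theta)}_{\cdots 0 \cdots}\in\pi_1(P(\theta),\vep')$ so that each representative stays in the closed upper half-plane apart from the short segment connecting to $\vep'$. Property (b) says $p_0^{(\theta)}$ stays in the upper half-plane throughout, so such a continuous choice exists by a straight-line homotopy (in the upper half-plane) between consecutive representatives. Next I would verify that no other moving puncture introduces spurious linking. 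The partner $p_1^{(\theta)}$ lies in the lower half-plane by (c) and hence never meets our loop. For any other pair of punctures, the explicit formula
\[
  t^{(\theta)}_{a_1\cdots a_{m-1}}=\Bigl(1+\sum_{j\neq k}(-1)^{a_j}\sqrt{r_j}+(-1)^{a_k}\sqrt{r_k}e^{\pi\sqrt{-1}\theta}\Bigr)^{2}
\]
shows that each pair's trajectories lie in a small neighborhood of $\bigl(1+\sum_{j\neq k}(-1)^{a_j}\sqrt{r_j}\bigr)^{2}$ whose size is controlled by $\sqrt{r_k}$; the spacing conditions $r_k<r_{k-1}/4$ guarantee these neighborhoods are mutually disjoint and do not obstruct the standard approach paths. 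Hence the deformation class is determined entirely by the motion of its own endpoint.

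Finally, at $\theta=1$ property (a) gives $p_0^{(1)}=t^{(0)}_{a_1\cdots 1\cdots a_{m-1}}$, and the deformed loop circles this point with approach from the upper half-plane, which is precisely the defining description of $\ell_{a_1\cdots 1\cdots a_{m-1}}$ in $\pi_1(P(0),\vep')=\pi_1(P(1),\vep')$. The main obstacle is the second step: one must check honestly that all $2^{m-1}$ puncture trajectories stay sufficiently separated so that no unintended braiding with our loop occurs, and this is exactly where the geometric hypotheses $r_k<r_{k-1}/4$ enter. Once this separation is established, the identification at $\theta=1$ follows immediately from the convention that loops approach their punctures through the upper half-plane.
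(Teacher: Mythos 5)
Your proof correctly identifies the right high-level picture and correctly flags the crux: one must rule out unintended braiding of $\ell_{a_1\cdots 0\cdots a_{m-1}}$ with the other moving punctures. But the way you discharge that step is wrong, and this is a genuine gap.

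You claim that the spacing conditions $r_k<r_{k-1}/4$ make the $2^{m-2}$ trajectory neighborhoods (for the punctures with $a_k=0$) mutually disjoint. This is false whenever $k<m-1$. Work in the square-root coordinate $\tilde t^{(\theta)}_{a_1\cdots a_{m-1}}=1+\sum_{j\neq k}(-1)^{a_j}\sqrt{r_j}+(-1)^{a_k}\sqrt{r_k}e^{\pi\sqrt{-1}\theta}$ (taking squares to go back to $t$ only changes lengths by bounded factors). Each such point with $a_k=0$ travels a half-circle of radius $\sqrt{r_k}$ around its center $1+\sum_{j\neq k}(-1)^{a_j}\sqrt{r_j}$. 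The nearest two centers (flip only the bit $a_{m-1}$) are separated by $2\sqrt{r_{m-1}}$. For disjointness you would need $\sqrt{r_{m-1}}>\sqrt{r_k}$; but the hypothesis gives $r_{m-1}<r_k/4^{\,m-1-k}\le r_k/4$ for $k\le m-2$, so the trajectory radius $\sqrt{r_k}$ is \emph{at least twice} the center spacing $2\sqrt{r_{m-1}}$, and the neighborhoods overlap substantially. So the spacing conditions do not separate the trajectories; they were imposed to keep the intersection points $t^{(0)}_{a_1\cdots a_{m-1}}$ in a fixed order on the real line, not to keep their orbits apart.

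The paper's actual argument circumvents this entirely. After observing that every $\tilde t^{(\theta)}$ stays in the right half-plane (so one may pass to the square-root model via $h:z\mapsto z^2$), it notes that for fixed $a_k=0$ the tilde-points all move by the \emph{same} translation vector $\sqrt{r_k}e^{\pi\sqrt{-1}\theta}$: the quantity $\tilde t^{(\theta)}_{a'_1\cdots 0\cdots a'_{m-1}}-\tilde t^{(\theta)}_{a_1\cdots 0\cdots a_{m-1}}$ is independent of $\theta$. A family of punctures undergoing rigid parallel translation never changes its isotopy class relative to a loop that is carried along with it; overlap of the swept regions is irrelevant. (The $a_k=1$ punctures are handled as you did, by observing they stay in the lower half-plane.) This rigidity observation is the missing idea; without it, or without some replacement for it, your argument does not go through.
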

  We give the proof of this lemma below. 
  By this variation, the base point moves around the divisor $(x_k=0)$, 
  since the base point $\vep' \in P(\theta)$ corresponds to the point 
  $(r_1 ,\ldots ,e^{2\pi \sqrt{-1}\theta }r_k ,\ldots ,r_{m-1} ,\vep') \in L(\theta)$. 
  It implies the conjugation by $\rho_k$ in $\pi_1 (X)$. 
  Hence we obtain the relation (i).

  To prove (ii), we use a similar argument for 
  $k=m-1$ and $\ell_{1\cdots 1} \in \pi_1 (P(0),\vep')$. 
  Let $\theta$ move from $0$ to $1$, then $\ell_{1\cdots 1}$ changes into 
  a loop in $P(1)$, which goes once around $t_{1\cdots 1}^{(1)}=t_{1\cdots 10}^{(0)}$ and 
  approaches this point through the lower half-plane (see Figure \ref{loop-change}). 
  Since such a loop is homotopic to  
  $\ell_{1\cdots 1} \ell_{1\cdots 10} \ell_{1\cdots 1}^{-1}$, 
  we obtain (ii). 
  \begin{figure}[h]
    \centering{
      \includegraphics[scale=1.0]{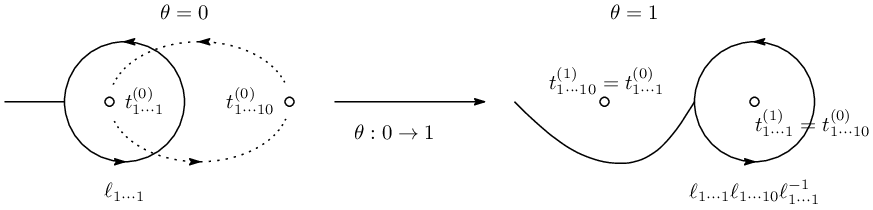} }
    \caption{the variation of $\ell_{1\cdots 1}$. \label{loop-change}}
  \end{figure}
\end{proof}
\begin{proof}[Proof of Lemma \ref{lem-loop-change}]
  We show that the variations of 
  $t_{a'_1 \cdots a'_{m-1}}$'s  
  do not interfere with moving of the loop $\ell_{a_1 \cdots a_{k-1} 0 a_{k+1} \cdots a_{m-1}}$.  
  We put $\tilde{t}_{a_1 \cdots a_{m-1}}^{(\theta)}:=
  1+\sum_{j \neq k} (-1)^{a_j} \sqrt{r_j}+(-1)^{a_k} \sqrt{r_k}e^{\pi \sqrt{-1}\theta}$. 
  This satisfies $(\tilde{t}_{a_1 \cdots a_{m-1}}^{(\theta)})^2=t_{a_1 \cdots a_{m-1}}^{(\theta)}$. 
  Since each $\tilde{t}_{a_1 \cdots a_{m-1}}^{(\theta)}$ is in the right half-plane, 
  $t_{a_1 \cdots a_{m-1}}^{(\theta)}$ does not meet the half-line $(-\infty, 0] \subset \R$. 
  For each $\theta$, 
  $\tilde{P}(\theta) :=$ (the right half-plane) 
  $-\{ \tilde{t}_{a_1 \cdots a_{m-1}}^{(\theta)} \mid a_j \in \{ 0,1 \} \}$ 
  is homeomorphic to $P(\theta)-( -\infty ,0 ]$ by the map 
  $$
  h: \tilde{P}(\theta) \longrightarrow P(\theta)-( -\infty ,0 ];\quad 
  z \longmapsto z^2. 
  $$
  It is sufficient to show that the points $\tilde{t}_{a_1 \cdots a_{m-1}}^{(\theta)}$'s 
  do not interfere with moving of 
  the loop 
  $\tilde{\ell}_{a_1 \cdots a_{k-1} 0 a_{k+1} \cdots a_{m-1}}$ in $\tilde{P}(\theta)$, 
  which satisfies $h_* (\tilde{\ell}_{a_1 \cdots a_{k-1} 0 a_{k+1} \cdots a_{m-1}})
  =\ell_{a_1 \cdots a_{k-1} 0 a_{k+1} \cdots a_{m-1}}$. 
  Since each $\tilde{t}_{a'_1 \cdots a'_{k-1} 1 a'_{k+1} \cdots a'_{m-1}}^{(\theta)}$ moves in 
  lower half-plane, it does not interfere with moving of 
  $\tilde{\ell}_{a_1 \cdots a_{k-1} 0 a_{k+1} \cdots a_{m-1}}$. 
  We consider the variation of $\tilde{t}_{a'_1 \cdots a'_{k-1} 0 a'_{k+1} \cdots a'_{m-1}}^{(\theta)}$ 
  for $(a'_1 ,\ldots ,a'_{k-1} , a'_{k+1} ,\ldots ,a'_{m-1}) \neq (a_1 ,\ldots ,a_{k-1} , a_{k+1} ,\ldots ,a_{m-1})$. 
  By the definition, 
  $\tilde{t}_{a'_1 \cdots a'_{k-1} 0 a'_{k+1} \cdots a'_{m-1}}^{(\theta)}
  -\tilde{t}_{a_1 \cdots a_{k-1} 0 a_{k+1} \cdots a_{m-1}}^{(\theta)}$ 
  does not depend on $\theta$. 
  Thus, $\tilde{t}_{a'_1 \cdots a'_{k-1} 0 a'_{k+1} \cdots a'_{m-1}}^{(\theta)}$ 
  moves parallel to $\tilde{t}_{a_1 \cdots a_{k-1} 0 a_{k+1} \cdots a_{m-1}}^{(\theta)}$. 
  This implies $\tilde{t}_{a'_1 \cdots a'_{k-1} 0 a'_{k+1} \cdots a'_{m-1}}^{(\theta)}$ 
  does not interfere with moving of 
  $\tilde{\ell}_{a_1 \cdots a_{k-1} 0 a_{k+1} \cdots a_{m-1}}$. 
  Therefore, the proof is completed. 
\end{proof}

\end{document}